\newtheorem{theorem}{Theorem}[section]
\newtheorem{corollary}[theorem]{Corollary}
\newtheorem{lemma}[theorem]{Lemma}
\newtheorem{proposition}[theorem]{Proposition}
\theoremstyle{definition}
\numberwithin{equation}{section}
\newcommand{\G}{\mathrm{G}}
\newcommand{\GL}{\mathrm{GL}}
\newcommand{\SL}{\mathrm{SL}}
\newcommand{\Sp}{\mathrm{Sp}}
\newcommand{\SU}{\mathrm{SU}}
\newcommand{\PSL}{\mathrm{PSL}}
\newcommand{\PSU}{\mathrm{PSU}}
\newcommand{\PSp}{\mathrm{PSp}}
\newcommand{\AGL}{\mathrm{AGL}}
\newcommand{\GaL}{\mathrm{\Gamma L}}
\newcommand{\GaO}{\mathrm{\Gamma O}}
\newcommand{\AGaL}{\mathrm{A}\Gamma\mathrm{L}}
\newcommand{\POm}{\mathrm{P \Omega}}
\newcommand{\J}{\mathrm{J}}
\newcommand{\HS}{\mathrm{HS}}
\newcommand{\McL}{\mathrm{McL}}
\newcommand{\Suz}{\mathrm{Suz}}
\newcommand{\He}{\mathrm{He}}
\newcommand{\Ru}{\mathrm{Ru}}
\newcommand{\ON}{\mathrm{O'N}}
\newcommand{\Co}{\mathrm{Co}}
\newcommand{\Fi}{\mathrm{Fi}}
\newcommand{\HN}{\mathrm{HN}}
\newcommand{\Th}{\mathrm{Th}}
\newcommand{\BM}{\mathrm{BM}}
\newcommand{\M}{\mathrm{M}}
\newcommand{\Ly}{\mathrm{Ly}}
\newcommand{\A}{\mathrm{A}}
\newcommand{\E}{\mathrm{E}}
\renewcommand{\S}{\mathrm{S}}
\newcommand{\C}{\mathrm{C}}
\newcommand{\D}{\mathrm{D}}
\newcommand{\Aut}{\mathrm{Aut}}
\newcommand{\PG}{\mathrm{PG}}
\renewcommand{\u}{\textsf{u}}
\renewcommand{\l}{\textsf{l}}
\newcommand{\B}{\mathrm{B}}
\newcommand{\N}{\mathrm{N}}
\newcommand{\F}{\mathrm{F}}
\newcommand{\Z}{\mathrm{Z}}
\newcommand{\dev}{\mathrm{dev}}
\newcommand{\Zbb}{\mathbb{Z}}
\newcommand{\Fbb}{\mathbb{F}}
\newcommand{\Dmc}{\mathcal{D}}
\newcommand{\Bmc}{\mathcal{B}}
\newcommand{\Pmc}{\mathcal{P}}
\newcommand{\Cmc}{\mathcal{C}}
\newcommand{\Smc}{\mathcal{S}}
\newcommand{\Nmc}{\mathcal{N}}
\newcommand{\e}{\epsilon}
\renewcommand{\leq}{\leqslant}
\renewcommand{\geq}{\geqslant}
\renewcommand{\mod}[1]{\ (\mathrm{mod}{\ #1})}
\newcommand{\imod}[1]{\allowbreak\mkern4mu({\operator@font mod}\,\,#1)}
\def\hlinewd#1{%
\noalign{\ifnum0=`}\fi\hrule \@height #1 %
\futurelet\reserved@a\@xhline}
\begin{document}
\title[Flag-transitive symmetric designs]{Affine groups as  flag-transitive and point-primitive automorphism groups of symmetric designs}

\author[S.H. Alavi]{Seyed Hassan Alavi}%
\thanks{Corresponding author: S.H. Alavi}
\address{Seyed Hassan Alavi, Department of Mathematics, Faculty of Science, Bu-Ali Sina University, Hamedan, Iran.}%
\email{alavi.s.hassan@basu.ac.ir; alavi.s.hassan@gmail.com}

\author[M. Bayat]{Mohsen bayat}%
\address{Mohsen Bayat, School of Mathematics, Institute for Research in Fundamental Sciences (IPM), Tehran, Iran.}%
\email{mbayat@ipm.ir;  mohsenbayat3989@gmail.com}

\author[A. Daneshkhah]{Ashraf Daneshkhah}%
\address{Ashraf Daneshkhah, Department of Mathematics, Faculty of Science, Bu-Ali Sina University, Hamedan, Iran.}%
\email{adanesh@basu.ac.ir}

\author[A. Montinaro]{Alessandro Montinaro}%
\address{Alessandro Montinaro, Dipartimento di Matematica e Fisica ``E. De Giorgi'', University of Salento, Lecce, Italy. }%
\email{alessandro.montinaro@unisalento.it}

\subjclass[]{20B25, 05B05, 05B25}%
\keywords{affine group, flag-transitive,  point-primitive, automorphism group, symmetric design.}
\date{\today}%

\begin{abstract}
In this article, we investigate symmetric designs admitting a flag-transitive and point-primitive affine automorphism group. We prove that if an automorphism group $G$ of a symmetric $(v,k,\lambda)$ design with $\lambda$ prime is point-primitive of affine type, then $G=2^{6}{:}\S_{6}$ and $(v,k,\lambda)=(16,6,2)$, or $G$ is a subgroup of $\AGaL_{1}(q)$ for some odd prime power $q$. In conclusion, we present a classification of flag-transitive and point-primitive symmetric designs with $\lambda$ prime, which says that such an incidence structure is a projective space $\PG(n,q)$, it has parameter set $(15,7,3)$, $(7, 4, 2)$, $(11, 5, 2)$, $(11, 6, 2)$, $(16,6,2)$ or $(45, 12, 3)$, or $v=p^d$ where $p$ is an odd prime and the automorphism group is a subgroup of $\AGaL_{1}(q)$.
\end{abstract}

\maketitle

\section{Introduction}\label{sec:intro}

Symmetric designs admitting flag-transitive automorphism groups are of most interest.
In 1987, Kantor \cite{a:Kantor-87-Odd} classified flag-transitive symmetric $(v,k,1)$ designs known as projective planes. More generally, a celebrated result on the classification of linear spaces was announced in 1990 \cite{a:BDDKLS90}, and the proof of this result was given in a series of papers with noting that some important special flag-transitive linear spaces namely projective and affine planes have drawn much attention, see for example, Foulser \cite{a:Foulser-64,a:Foulser-64-sol}, Kantor \cite{a:Kantor-85-Homgeneous,a:Kantor-87-Odd} and Wagner \cite{a:Wagner-1965-affine}.
The approach to the classification of linear spaces starts with a result of Higman and McLaughlin \cite{a:HigMcL-61}, according to which any flag-transitive group of a linear space must act primitively on the points of the space.
Buekenhout, Delandtsheer and Doyen \cite{a:BDD-1988} using O'Nan-Scott theorem for finite primitive permutation groups proved that
the socle of a flag-transitive group of linear space is either an elementary abelian group (affine type), or a non-abelian simple group (almost simple type), and the classification is proved by analyzing each of these possibilities.
The proof in the affine case is due to Liebeck \cite{a:Liebeck-98-Affine}, and the almost simple case has been treated by several authors namely Buekenhout, Delandtsheer, Doyen,  Kleidman and Saxl \cite{a:Delandtsheer-86,a:Delandsheer-Lin-An-2001,a:Kleidman-Exp,a:Saxl2002}.

The situation for a larger $\lambda$ is rather different as there are examples of flag-transitive and point-imprimitive symmetric designs which we briefly discuss later. Focusing on the point-primitive symmetric designs, for biplanes (symmetric designs with $\lambda=2$) and triplanes (symmetric design with $\lambda=3$), O'Reilly Regueiro~\cite{a:Regueiro-reduction} proved a similar reduction result that a flag-transitive and point-primitive automorphism group of a biplane or a triplane must be of almost simple or affine type, and then in a series of papers (including point-imprimitive examples), she gave a classification of biplanes excluding $1$-dimensional affine automorphism groups \cite{a:Regueiro-alt-spor,a:Regueiro-reduction,a:Regueiro-classical,a:Regueiro-Exp}. Dong, Fang and Zhou studied flag-transitive automorphism groups of triplanes, and in conclusion, they determined all such possible symmetric designs excluding $1$-dimensional affine automorphism groups  \cite{a:Zhou-lam3-affine,a:Zhou-lam3-spor,a:Zhou-lam3-alt,a:Zhou-lam3-excep,a:Zhou-lam3-classical}. Recently, Z. Zhang, Y. Zhang and Zhou \cite{a:Zhou-sym-lam-prime} generalized O'Reilly Regueiro's result to prime $\lambda$, and proved that a flag-transitive and point-primitive automorphism group of a symmetric design with $\lambda$ prime must be of almost simple or affine type.
In order to classify all such designs, we need to analyze these two types of primitive automorphism groups in details. The almost simple case has been handled in    \cite{a:ABD-PrimeLam,a:ABD-Exp,a:ADM-PrimeLam-An}. In this paper, we deal with the affine type automorphism groups, and our main result is

\begin{theorem}\label{thm:main}
	Let $\Dmc$ be a nontrivial symmetric design with $\lambda$ prime admitting a flag-transitive and point-primitive automorphism group $G$ of affine type. Then  $G\leq\AGaL_1(q)$, or $\Dmc$ is a symmetric $(16,6,2)$ design with full automorphism group $2^{4}:\S_{6}$ and point-stabilizer $\S_{6}$.
\end{theorem}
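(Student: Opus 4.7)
The plan is to write $G = V \rtimes G_{0}$ with translation socle $V = \Fbb_{p}^{d}$ acting regularly on points and $G_{0} \leq \GL_{d}(p)$ the stabilizer of the zero vector, so that $v = p^{d}$. Point-primitivity is equivalent to $G_{0}$ acting irreducibly on $V$, while flag-transitivity forces $G_{0}$ to act transitively on the $k$ blocks through the origin, whence $k \mid |G_{0}|$. Combined with the symmetric-design relation $k(k-1) = \lambda(v-1)$, the Fisher-type bound $k \geq \sqrt{\lambda(v-1)}$, and the hypothesis that $\lambda$ is prime (so either $\lambda \mid k$ or $\lambda \mid k-1$, yielding a tight factorization of $p^{d} - 1$), these are the numerical constraints that drive the whole case analysis. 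In particular, the $k$ base blocks through $0$ together contain the set of $k(k-1) = \lambda(v-1)$ non-zero pairs, so the subdegree structure of $G_{0}$ on $V \setminus \{0\}$ can be compared directly against $k$.

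The core of the argument is an application of Aschbacher's theorem to $G_{0} \leq \GL_{d}(p)$: the irreducible subgroup $G_{0}$ belongs to one of the geometric classes $\Cmc_{2}, \Cmc_{3}, \ldots, \Cmc_{8}$, or to the almost simple class $\Smc$. The conclusion $G \leq \AGaL_{1}(q)$ with $q = p^{d}$ is precisely the subcase of $\Cmc_{3}$ with $m = 1$ (one-dimensional semilinear action), so the task reduces to showing that every other possibility either produces a numerical contradiction or yields the sporadic biplane $(v,k,\lambda) = (16,6,2)$ with $G_{0} = \S_{6}$ on $\Fbb_{2}^{4}$ (via the exceptional embedding $\S_{6} < \A_{8} \cong \GL_{4}(2)$). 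For each Aschbacher class I would combine (i) the generic order bound on $G_{0}$ intrinsic to the class, (ii) the divisibility $k \mid |G_{0}|$ together with $k^{2} \geq \lambda v$, and (iii) the prime-$\lambda$ arithmetic applied to $v = p^{d}$, which together typically force a very short list of admissible $(p,d)$ to be inspected directly.

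The imprimitive class $\Cmc_{2}$, the field-extension class $\Cmc_{3}$ with $m \geq 2$, and the subfield class $\Cmc_{5}$ each reduce essentially to a smaller affine problem over a larger or different field and are dispatched by induction or by the same estimates; the tensor classes $\Cmc_{4}, \Cmc_{7}$, the extraspecial normalizer class $\Cmc_{6}$, and the classical class $\Cmc_{8}$ are eliminated (outside the $(16,6,2)$ example) by matching orbit lengths on $V \setminus \{0\}$ against the required divisor $k(k-1)$. The main obstacle is the class $\Smc$: here one must invoke the classification of absolutely irreducible representations of quasi-simple groups in small degree to enumerate candidates $(G_{0}, V)$ with $\dim V = d$ small enough for $p^{d}$ to admit a symmetric design satisfying $k \mid |G_{0}|$ and the Fisher bound, and each such candidate requires a delicate orbit-length analysis on $V \setminus \{0\}$ to exclude a $2$-design structure with $\lambda$ prime. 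Once $G_{0} = \S_{6}$ on $\Fbb_{2}^{4}$ is identified as the unique surviving exception, the uniqueness of the $(16,6,2)$ biplane and the identification of its full automorphism group as $2^{4}{:}\S_{6}$ with point-stabilizer $\S_{6}$ follow from the standard classification of biplanes of order four.
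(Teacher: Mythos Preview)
Your high-level architecture is right --- Aschbacher's theorem applied to the irreducible point-stabilizer $G_{0}\leq\GL_{d}(p)$, followed by elimination of each class --- and this is exactly what the paper does. But your plan is missing several reductions that the paper carries out \emph{before} the Aschbacher analysis, and without them the class-by-class elimination would be significantly harder.

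First, the paper does not attempt the geometric and quasisimple analysis for all prime $\lambda$. It quotes existing classifications for $\lambda=2,3$ (O'Reilly-Regueiro, Dong--Zhou) and for $\gcd(k,\lambda)=1$ (Biliotti--Montinaro, which already forces $G\leq\AGaL_{1}(q)$). This leaves only the case $\lambda\geq 5$ with $\lambda\mid k$. In particular, the $(16,6,2)$ biplane is \emph{not} isolated inside the $\mathcal{S}$-class analysis as you suggest; it is inherited from the $\lambda=2$ literature, and the entire Aschbacher elimination in the paper runs under the standing hypothesis $\lambda\geq 5$, $p$ odd. Second, the paper disposes of the subcase $\lambda=p$ separately (Proposition~3.1), using a Diophantine result of Skinner on $x^{2}=4p^{n}-4p+1$; this step has no analogue in your plan and is essential, since afterwards one may assume $\lambda\neq p$, which repeatedly simplifies divisibility arguments.

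Two further technical tools you do not mention are used throughout the paper and are hard to do without. One is the precise subdegree condition: for \emph{every} nontrivial $G_{0}$-orbit length $d$ on $V\setminus\{0\}$ one has $k\mid \lambda d$, not merely a comparison of orders; this is what produces the sharp divisors of $k/\lambda$ in each Aschbacher class. The other is the multiplier-$(-1)$ criterion for abelian difference sets (if $-1\in G_{0}$ then $v$ and $\lambda$ are even and $k-\lambda$ is a square), which the paper invokes repeatedly to eliminate candidates in the $\mathcal{C}_{6}$, $\mathcal{C}_{8}$ and $\mathcal{S}$ analyses. Finally, your remark that $\mathcal{C}_{2}$, $\mathcal{C}_{3}$ (with $m\geq 2$) and $\mathcal{C}_{5}$ ``reduce to a smaller affine problem \ldots\ dispatched by induction'' does not match the paper: $\mathcal{C}_{3}$ is excluded outright by minimality of the chosen $n$, $\mathcal{C}_{5}$ is absorbed into $\mathcal{C}_{4}/\mathcal{C}_{7}$, and $\mathcal{C}_{2}$ is handled by direct orbit-length bounds and a rather intricate arithmetic case split, not induction.
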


As an immediate consequence of Theorem \ref{thm:main}, by the main results of \cite{a:ABD-PrimeLam,a:ABD-Exp,a:ADM-PrimeLam-An,a:Biliotti-CP-sym-affine,a:Zhou-lam3-affine,a:Regueiro-reduction}, we are able to present a classification result on flag-transitive and point-primitive symmetric designs with $\lambda$ prime.

\begin{corollary}\label{cor:main}
	Let $\Dmc$ be a nontrivial symmetric $(v,k,\lambda)$ design with $\lambda$ prime admitting a flag-transitive and point-primitive automorphism group $G$. Then one of the following holds:
	\begin{enumerate}[\rm \quad (a)]
		\item  $\Dmc$ is a projective spaces $\PG(n-1,q)$ with $\lambda = (q^{n-2}-1)/(q-1)$ prime and $\PSL_n(q)\unlhd G\leq \Aut(\PSL_n(q))$, or $\Dmc$ and $G$ are as in  lines {\rm 1-5} of {\rm Table~\ref{tbl:main}};
		\item  $\Dmc$ is  the point-hyperplane design of the projective space $\PG_{2}(3,2)$ with parameters $(15,7,3)$ and $G=\A_{7}$ with the point-stabilizer $\PSL_{3}(2)$;
		\item $\Dmc$ has parameters $(16,6,2)$ and its full automorphism group is $2^{4}\S_{6}$ with point-stabilizer $\S_{6}$;
		\item $\Dmc$ has $q=p^{d}$ points with $p\neq \lambda$ an odd prime and $G$ is a subgroup of the group $\AGaL_{1}(q)$ of $1$-dimensional semilinear affine transformations.
	\end{enumerate}
\end{corollary}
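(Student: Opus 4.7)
The proof of Corollary~\ref{cor:main} proceeds by assembling Theorem~\ref{thm:main} with three external components: (i) the reduction to affine or almost simple type, (ii) the classification in the almost simple case already completed in a sequence of companion papers, and (iii) a small bookkeeping step that matches the outputs of (i)--(ii) with the four conclusions (a)--(d).

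\textbf{Step 1 (Reduction to two types).} The plan is to invoke the main theorem of Z.~Zhang, Y.~Zhang and Zhou \cite{a:Zhou-sym-lam-prime}, which asserts that any flag-transitive and point-primitive automorphism group $G$ of a nontrivial symmetric design with $\lambda$ prime is either of affine type (socle elementary abelian) or of almost simple type (socle a nonabelian simple group). This dichotomy reduces the proof to analyzing these two mutually exclusive cases. For $\lambda=2,3$ this reduction is the earlier result of O'Reilly Regueiro \cite{a:Regueiro-reduction} and of Dong--Fang--Zhou \cite{a:Zhou-lam3-affine}, which are cited for consistency and to recover small-$\lambda$ examples already present in the literature.

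\textbf{Step 2 (Affine case).} If $G$ is of affine type, Theorem~\ref{thm:main} applies directly and yields exactly two outcomes: either $G \leq \AGaL_1(q)$, or $\Dmc$ is the symmetric $(16,6,2)$ design with full automorphism group $2^4{:}\S_6$ and point-stabilizer $\S_6$. The first outcome is conclusion (d), after the routine observation that the $2$-dimensional affine example is the $(16,6,2)$ design (so the residual prime power $q$ may be taken to be odd, i.e.\ $p\neq \lambda$ and $p$ odd); the second is conclusion (c). Here one also uses the result of Biliotti and Montinaro \cite{a:Biliotti-CP-sym-affine} to confirm that no further $1$-dimensional affine examples intrude into cases (a)--(c).

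\textbf{Step 3 (Almost simple case).} If $G$ is almost simple with socle $X$, the plan is to split according to the type of $X$ and invoke the corresponding companion paper: the alternating-socle case is treated in \cite{a:ADM-PrimeLam-An}, yielding either a projective space $\PG(n-1,q)$ (contained in conclusion (a)) or the point-hyperplane design of $\PG(3,2)$ with $G=\A_7$, which is exactly conclusion (b); the exceptional Lie type case is treated in \cite{a:ABD-Exp}; and the remaining sporadic and classical Lie type cases are treated in \cite{a:ABD-PrimeLam}. The outputs of these three papers together produce precisely the projective-space family and the five sporadic parameter sets listed in lines~1--5 of Table~\ref{tbl:main}, i.e.\ conclusion (a), together with the exceptional design of conclusion (b).

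\textbf{Step 4 (Collation).} Finally, collating the possibilities produced by Steps~2 and~3 gives the four mutually exclusive conclusions (a)--(d) of the corollary. There is no substantive obstacle at this stage: the only care needed is to verify that the $(16,6,2)$ design in (c) is genuinely affine (and hence does not appear in the almost simple lists), and that the single projective-plane design $(7,4,2)$ in (a) is recorded as almost simple in Table~\ref{tbl:main} rather than being double-counted in (d). The main mathematical difficulty of the program -- proving Theorem~\ref{thm:main} itself and classifying the almost simple examples in \cite{a:ABD-PrimeLam,a:ABD-Exp,a:ADM-PrimeLam-An} -- lies outside the proof of the corollary, which is purely an assembly argument.
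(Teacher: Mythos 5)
Your proposal is correct and follows essentially the same route as the paper, whose proof of the corollary is a one-line assembly citing Theorem~\ref{thm:main} together with the companion classifications in \cite{a:ABD-PrimeLam,a:ABD-Exp,a:ADM-PrimeLam-An,a:Biliotti-CP-sym-affine,a:Zhou-lam3-affine,a:Regueiro-reduction}; your Steps~1--4 merely make explicit the reduction (via \cite{a:Zhou-sym-lam-prime}) and the case-matching that the authors leave implicit.
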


\begin{table}
	\centering
	\small
	\caption{Some flag-transitive and point-primitive symmetric designs with $\lambda$ prime.}\label{tbl:main}
	\begin{tabular}{llllllll}
		\hline\noalign{\smallskip}
		Line &
		\multicolumn{1}{l}{$v$} &
		\multicolumn{1}{l}{$k$} &
		\multicolumn{1}{l}{$\lambda$} &
		\multicolumn{1}{l}{$X$} &
		\multicolumn{1}{l}{$G_{\alpha}$}  &
		\multicolumn{1}{l}{$G$}  &
		\multicolumn{1}{l}{References} \\
		\noalign{\smallskip}\hline\noalign{\smallskip}
		$1$ &
		$7$ &
		$4$ &
		$2$ &
		$\PSL_{2}(7)$ &
		$\S_{4}$ &
		$\PSL_{2}(7)$ &
		\cite{a:ABD-PSL2, b:Handbook} \\
		$2$ &
		$11$ &
		$5$ &
		$2$ &
		$\PSL_{2}(11)$&
		$\A_{5}$&
		$\PSL_{2}(11)$ &
		\cite{a:ABD-PSL2, b:Handbook} \\
		$3$ &
		$11$ &
		$6$ &
		$3$ &
		$\PSL_{2}(11)$&
		$\A_{5}$&
		$\PSL_{2}(11)$ &
		\cite{a:ABD-PSL2, b:Handbook} \\
		$4$ &
		$45$ &
		$12$ &
		$3$ &
		$\PSU_{4}(2)$  &
		$2^{.}(\A_{4}\times \A_{4}).2$ &
		$\PSU_{4}(2)$ &
		\cite{a:Braic-2500-nopower,a:Dempwolff2001, a:Praeger-45-12-3}\\
		$5$ &
		$45$ &
		$12$ &
		$3$ &
		$\PSU_{4}(2)$  &
		$2^{.}(\A_{4}\times \A_{4}).2:2$ &
		$\PSU_{4}(2):2$
		&\cite{a:Braic-2500-nopower,a:Dempwolff2001, a:Praeger-45-12-3}\\
		%
		%
		\noalign{\smallskip}\hline
	\end{tabular}
\end{table}
%


We briefly give some information in Section~\ref{sec:example} below about the symmetric designs that occur in our study. In order to prove Theorem~\ref{thm:main}, we include all possible symmetric designs for $\lambda = 2, 3$ obtained in \cite{a:Zhou-lam3-affine,a:Regueiro-reduction} and therein references, and so we can assume that $\lambda\geq 5$. Since $\lambda(v-1)=k(k-1)$, and $\lambda$ is prime, $\lambda$ divides $k$ or $k-1$. In the latter case, $\lambda$ is coprime to $k$, and hence the main result of \cite{a:Biliotti-CP-sym-affine} implies that $v=p^d$ is odd and $G\leq \AGaL_1(p^d)$ is point-primitive and block-primitive. Note that $v=p^d$ is odd as $\lambda$ is an odd prime. So we only need to focus on the case where $\lambda$ divides $k$ and $p>2$.  In Proposition \ref{prop:lam-p}, we first threat the case where $\lambda=p$, and show that there is no symmetric design with parameter set $(p^{d},k,p)$. This allows us to assume that $\lambda\neq p$. Since $G$ is point=primitive, $G\leq \AGL_d(p)$ and $H:=G_{0}$ is an irreducible subgroup of $\GL_d(p)$. Then by Aschbacher's Theorem \cite{a:Aschbacher-84}, the subgroup $H$ belongs to one of the eight geometric families $\Cmc_{i}$ ($i=1,\ldots ,8$) of subgroups of $G$, or to the family where $H^{(\infty)}$ (the last term in the derived series of $H$) is quasisimple.
A rough description of the $\Cmc_i$ families is given in Table \ref{tbl:max}. We follow the description of these subgroups as in \cite[Chapter 4]{b:KL-90}, and analyze each of these possibilities in detail in Section \ref{sec:proof}. In quasisimple case, our proofs essentially rely on the machinery developed by Liebeck in~\cite{a:Liebeck-98-Affine}.

As noted above, in this paper, we deal with point-primitive automorphism groups of flag-transitive symmetric designs. On the other hand, when an automorphism group of a symmetric design is point-imprimitive,
inspired by the result of O'Reilly Regueiro \cite[Theorem 1]{a:Regueiro-reduction}, for a fixed $\lambda$, Praeger and Zhou \cite{a:Praeger-imprimitive} proved that either $k\leq\lambda(\lambda-3)/2$, or the parameter set of the design falls into three cases in which all other parameters given in terms of $\lambda$. Considering the fact that all flag-transitive point-imprimitive symmetric designs with either $\lambda \leq 10$ or $k> \lambda(\lambda-3)/2$ are classified \cite{a:Mandic-Sym-Imp-lam10, a:Montinaro-sym-imp-ellgeq3,a:Montinaro-sym-imp-ell2}, and this leaves an open problem of investigating the flag-transitive and point-imprimitive symmetric designs with $\lambda>10$ prime and $k\leq \lambda(\lambda-3)/2$.

\begin{table}
	\centering
	\small
	\caption{The geometric subgroup collections.}\label{tbl:max}
		\begin{tabular}{clllll}
			\noalign{\smallskip}\hline\noalign{\smallskip}
			Class & Rough description\\
			\noalign{\smallskip}\hline\noalign{\smallskip}
			$\Cmc_1$ & stabilizers of subspaces of $V$\\
			$\Cmc_2$ & stabilizers of decompositions $V=\bigoplus_{i=1}^{t}V_i$, where $\dim V_i = a$\\
			$\Cmc_3$ & stabilizers of prime index extension fields of $\mathbb{F}$\\
			$\Cmc_4$ & stabilizers of decompositions $V=V_1 \otimes V_2$\\
			$\Cmc_5$ & stabilizers of prime index subfields of $\mathbb{F}$\\
			$\Cmc_6$ & Normalizers of symplectic-type $r$-groups in absolutely irreducible representations\\
			$\Cmc_7$ & stabilizers of decompositions $V=\bigotimes_{i=1}^{t}V_i$, where $\dim V_i = a$\\
			$\Cmc_8$ & stabilizers of non-degenerate forms on $V$\\ \noalign{\smallskip}\hline\noalign{\smallskip}
		\end{tabular}
\end{table}

\subsection{Examples}\label{sec:example}

For the symmetric designs in parts (b) and (c) of Corollary~\ref{cor:main}, Hussain \cite{a:Hussain-1945} proved that there exist exactly three non-isomorphic symmetric designs with parameter set $(16,6,2)$. O'Reilly Regueiro \cite[ p. 139]{a:Regueiro-reduction} described the three designs and showed that exactly two of them are flag-transitive. 
The design in Corollary~\ref{cor:main}(c) or Theorem \ref{thm:main} is the point-primitive one with automorphism group $2^4:\S_{6}$ as a subgroup of $\AGL_{4}(2)$, see also \cite{a:ADP-bi,a:Regueiro-reduction}. The unique symmetric $(15,7,3)$ design in Corollary~\ref{cor:main}(b) with automorphism group $\A_{7}$ is viewed as a projective space \cite{a:BDD-1988} arose from the study of symmetric designs with $2$-transitive automorphism groups, see \cite{a:Kantor-85-2-trans}. 
For the symmetric designs in Table~\ref{tbl:main}, the unique symmetric $(7,4,2)$ design is indeed the complement of Fano plane with $G=\PSL(2,7)$ as its flag-transitive and point-primitive automorphism group, see \cite{a:ABD-PSL2,b:Handbook}.  There exists a unique symmetric design with parameter set $(11,5,2)$ which is a Hadamard design and it can also be constructed as a Paley difference set. The full automorphism group of this design is $\PSL(2,11)$ which is flag-transitive and point-primitive with $\A_{5}$ as its point-stabilizer \cite{a:Kantor-85-2-trans,a:Regueiro-reduction}). This group contains the Frobenius group $11:5$ as a subgroup of $\AGL_1(11)$ which is an affine automorphism group of this Hadamard design.  
The complement of this design is the unique symmetric $(11,6,3)$ design whose full automorphism group $\PSL(2,11)$ is also flag-transitive and point-primitive with $\A_{5}$ point-stabilizer, see \cite{a:Zhou-lam3-classical}. 
For the symmetric $(45, 12, 3)$ design, Praeger ~\cite[Theorem 3.3]{a:Praeger-45-12-3} proved that, up to isomorphism, there is only one symmetric $(45, 12, 3)$ design with flag-transitive and point-primitive  automorphism group $\PSU_{4}(2):2$. 

Note for the case where $G\leq \AGaL_{1}(q)$, we know several examples with $\lambda$ prime, however, a feasible classification in this case, seems to be out of reach. An exceptional example is the unique symmetric $(37,9,2)$ design admitting flag-transitive automorphism group $37 :9<\AGL_{1}(37)$ with point-stabilizer $9$. Indeed, there are four symmetric designs on $37$ points \cite{a:Salwach-Mezzaroba-1978}, only one of which has a flag-transitive automorphism group of rank $5$ that we mentioned.
Excluding above examples, any symmetric design with $\lambda$ a prime not dividing $k$ admitting $G\leq \AGaL_{1}(q)$ as a flag-transitive automorphism group has parameter set $(p^{d},(p^{d}-1)/i,(p^{d}-1-i)/i^{2})$ for some divisor $i$ of $p^{d}-1$, a base block is $\left \langle \omega^{i} \right\rangle$, where $\omega$ is a primitive element of $\Fbb_{p^{d}}$, and if $i=2$, then the designs are of Paley type, see  \cite[Theorem 2]{a:Biliotti-CP-sym-affine}.

\subsection{Definitions and notation}\label{sec:defn}
All groups and incidence structures in this paper are finite.  We denote by $\Fbb_{q}$ the Galois field of size $q$. Symmetric and alternating groups on $n$ letters are denoted by $\S_{n}$ and $\A_{n}$, respectively. We write ``$\Zbb_n$'' for the cyclic group of order $n$. A finite simple group is (isomorphic to) a cyclic group of prime order, an alternating group $\A_{n}$ for $n\geq 5$, a simple group of Lie type or a sporadic simple group, see \cite{b:Atlas} or \cite[Tables~5.1A-C]{b:KL-90}.
We use that standard notation as in \cite{b:Atlas} for finite simple groups, and we consider the finite simple groups of Lie type as listed in Table~\ref{tbl:simplegroups} by excluding all isomorphic groups. We note  that $\PSL_{2}(2)\cong \A_{3}$ and $\PSL_{2}(3)\cong \A_{4}$ are not simple. All isomorphisms amongst the classical groups and between the classical groups and the alternating groups are given in \cite[Proposition~2.9.1]{b:KL-90}. Since
$\PSL_{2}(4)\cong \PSL_{2}(5)\cong \A_{5}$, $\PSL_{2}(9)\cong \PSp_{4}(2)'\cong \A_{6}$ and $\PSL_{4}(2)\cong \A_{8}$,
we view all these simple groups of Lie type as their corresponding alternating groups, and since $\PSL_{2}(7)\cong \PSL_{3}(2)$, we  exclude $\PSL_{2}(7)$. For the finite simple exceptional groups, we know that $^{2}\B_{2}(2)\cong 5:4$ is not simple, and $\G_{2}(2)'\cong \PSU_{3}(3)$ and
$^{2}\G_{2}(3)'\cong \PSL_{2}(8)$, and so both groups are viewed as classical groups. Further, ${}^2\F_4(2)'$ is the Tits simple group and we treat this group as a sporadic simple group. Note in passing that, we sometimes use the Lie notation to denote the corresponding finite simple groups of Lie type.
For example, we may write $\A_{l}(q)$ and $\A_{l}^{-}(q)$ in place of $\PSL_{l+1}(q)$ and $\PSU_{l+1}(q)$, respectively, $\D_l^{-}(q)$ instead of  $\POm_{2l}^{-}(q)$. We also write $\E_6^{-}(q)$ for ${}^2\E_6(q)$.

\begin{table}
	\small
	\caption{Finite simple groups of Lie type.}\label{tbl:simplegroups}
	\begin{tabular}{lllllll}
		\hline\noalign{\smallskip}
		Type &
		Simple group &
		Lie rank &
		Condition \\
		\noalign{\smallskip}\hline\noalign{\smallskip}
		$\A_{l}$ &
		$\PSL_{l+1}(q)$ &
		$l$ &
		$l\geq 2$, $(l,q)\neq (1,2), (1,3), (1,4), (1,5), (1,7), (1,9), (3,2)$ \\
		$\A_{l}^{-}$ &
		$\PSU_{l+1}(q)$ &
		$[\frac{l+1}{2}]$ &
		$l\geq 3$, $(l,q)\neq (2,2)$ \\
		$\C_{l}$ &
		$\PSp_{2l}(q)$ &
		$l$ &
		$l\geq 2$, $(l,q)\neq (2,2), (2,3)$ \\
		$\B_{l}$ &
		$\POm_{2l+1}(q)$ &
		$l$ &
		$l\geq 3$, $q$ odd \\
		$\D_{l}^{\e}$ &
		$\POm_{2l}^{\e}(q)$ &
		$l$ &
		$l\geq 4$, $\e=\pm$ \\
		$\G_{2}$ &
		$\G_{2}(q)$ &
		$2$ &
		$q\geq 3$\\
		$\F_{4}$ &
		$\F_{4}(q)$ &
		$4$ &
		$q\geq 2$\\
		$\E_{6}$ &
		$\E_{6}(q)$ &
		$6$ &
		$q\geq 2$ &\\
		$\E_{6}^{-}$ &
		$\E_{6}^{-}(q)$ &
		$4$ &
		$q\geq 2$
		\\
		$\E_{7}$ &
		$\E_{7}(q)$ &
		$7$ &
		$q\geq 2$ &\\
		$\E_{8}$ &
		$\E_{8}(q)$ &
		$8$ &
		$q\geq 2$ &\\
		${}^{2}\!\B_{2}$ &
		${}^{2}\!\B_{2}(q)$ &
		$1$ &
		$q=2^{2m+1}\geq 8$\\
		${}^{2}\!\G_{2}$ &
		${}^{2}\!\G_{2}(q)$ &
		$1$ &
		$q=3^{2m+1}\geq 9$\\
		${}^{2}\!\F_{4}$ &
		${}^{2}\!\F_{4}(q)$ &
		$2$ &
		$q=2^{2m+1}\geq 8$\\
		${}^3\!\D_{4}$ &
		${}^3\!\D_{4}(q)$ &
		$2$ &
		$q\geq 2$
		\\
		\noalign{\smallskip}\hline\noalign{\smallskip}&
	\end{tabular}
\end{table}

A \emph{symmetric $(v,k,\lambda)$ design} $\Dmc$ is a pair $(\Pmc,\Bmc)$ with a set $\Pmc$ of $v$ points and a set $\Bmc$ of $v$ blocks such that each block is a $k$-subset of $\Pmc$ and every pair of points is incident in $\lambda$ blocks. An \emph{automorphism} of $\Dmc$ is a permutation on $\Pmc$ which maps blocks to blocks and preserves the incidence. The \emph{full automorphism} group $\Aut(\Dmc)$ of $\Dmc$ is the group consisting of all automorphisms of $\Dmc$. A \emph{flag} of $\Dmc$ is a point-block pair $(\alpha, B)$ such that $\alpha \in B$. For $G\leq \Aut(\Dmc)$, $G$ is called \emph{flag-transitive} if $G$ acts transitively on the set of flags. The group $G$ is said to be \emph{point-primitive} if $G$ acts primitively on $\Pmc$.
A \emph{$(v, k, \lambda)$-difference} set $D$ is a $k$-subset of an additively written group $T$ of order $v$ such that the list of differences $\Delta D=\{\alpha-\beta \mid \alpha,\beta\in D, \alpha\neq \beta\}$ contains each nonzero element in $T$ precisely $\lambda$ times. If $T$ is elementary abelian, $D$ is called \emph{elementary abelian difference set}. If $D$ is a $(v, k, \lambda)$-difference set, the \emph{development} of $D$ defined by $\dev(D)=(T,\Bmc_{D})$, where $\Bmc_{D}=\{D+x\mid x\in T\}$, is a symmetric $(v,k,\lambda)$ design with $T$ as its point-regular automorphism group \cite[Theorem VI.1.6]{b:Beth-I}. Conversely, every symmetric $(v,k,\lambda)$ design admitting a point-regular automorphism group arises in this way. The automorphisms of $T$, which are also automorphisms of $\dev(D)$, are called \emph{multipliers}. The multipliers of the form $\alpha \mapsto m\alpha$, with $m$ integer, are called \emph{numerical}.  For a given positive integer $n$ and a prime divisor $p$ of $n$, we denote the $p$-part of $n$ by $n_{p}$, that is to say, $n_{p}=p^{t}$ with $p^{t}\mid n$ but $p^{t+1}\nmid n$.
Further notation and definitions in both design theory and group theory are standard and can be found, for example in \cite{b:Beth-I,b:Dixon,b:Lander}.  We use \textsf{GAP} \cite{GAP4} for computational arguments.

\section{Preliminaries}\label{sec:pre}
In this section, we state some useful facts in both design theory and group theory. If a group $G$ acts on a set $\Pmc$ and $\alpha\in \Pmc$, the \emph{subdegrees} of $G$ are the length of orbits of the action of the point-stabilizer $G_\alpha$ on $\Pmc$.

\begin{lemma}\label{lem:six} {\rm \cite[Lemma 2.1]{a:ABD-PSL2}}
	Let $\Dmc$ be a symmetric $(v,k,\lambda)$ design admitting a flag-transitive automorphism $G$ with a point-stabilizer $H$. Then
	\begin{enumerate}[\rm \quad (a)]
		\item $k(k-1)=\lambda(v-1)$;
		\item $4 \lambda(v-1)+1$ is a square;
		\item $k$ divides $|H|$, and $\lambda v<k^2$;
		\item $k\mid \lambda d$, for all nontrivial subdegrees $d$ of $G$.
	\end{enumerate}
\end{lemma}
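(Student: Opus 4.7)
Part (a) is the classical counting identity for symmetric $2$-designs: fixing a point $\alpha$, I would double-count pairs $(\beta,B)$ with $\beta\neq\alpha$ and $\alpha,\beta\in B$. Each of the $v-1$ points $\beta\neq\alpha$ lies with $\alpha$ in exactly $\lambda$ blocks, contributing $\lambda(v-1)$ pairs; on the other hand, since the design is symmetric the number of blocks through $\alpha$ equals $k$, and each such block contributes $k-1$ pairs, giving $k(k-1)$. Part (b) is then immediate from (a), as $4k(k-1)+1=(2k-1)^{2}$.

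For part (c), flag-transitivity of $G$ means that for any two blocks $B,B'$ through $\alpha$ the flags $(\alpha,B)$ and $(\alpha,B')$ are $G$-equivalent via an element that necessarily fixes $\alpha$; thus $H=G_{\alpha}$ acts transitively on the $k$ blocks through $\alpha$, and orbit-stabilizer yields $k\mid |H|$. For the inequality, I would rewrite (a) as $k^{2}-\lambda v=k-\lambda$ and observe that $k>\lambda$ for any nontrivial design: if $k\leq\lambda$ then $k(k-1)\leq\lambda(k-1)<\lambda(v-1)$ (using $v>k$), contradicting (a); hence $\lambda v<k^{2}$.

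For part (d), let $\Delta$ be a $G_{\alpha}$-orbit on $\Pmc$ different from $\{\alpha\}$ and of length $d$; since $\{\alpha\}$ is itself a $G_{\alpha}$-orbit, we have $\alpha\notin\Delta$. I would double-count pairs $(\gamma,B)$ with $\gamma\in\Delta$ and $\alpha,\gamma\in B$. Fixing $\gamma\in\Delta$, the number of admissible blocks $B$ is $\lambda$, so the total count is $d\lambda$. Alternatively, $G_{\alpha}$ is transitive on the $k$ blocks through $\alpha$ and stabilizes $\Delta$ setwise, so the number $c:=|B\cap\Delta|$ is independent of $B\ni\alpha$, and the count becomes $kc$. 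Therefore $d\lambda=kc$, whence $k\mid\lambda d$. None of the steps presents a genuine obstacle; the key conceptual input underlying both (c) and (d) is simply that flag-transitivity forces $G_{\alpha}$ to act transitively on the blocks through $\alpha$, after which each assertion reduces to a one-line counting argument.
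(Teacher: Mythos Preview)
Your argument is correct and follows the standard route; the paper itself does not supply a proof of this lemma, merely citing \cite[Lemma~2.1]{a:ABD-PSL2}, and the cited proof proceeds along exactly the lines you sketch. In particular, parts (a)--(b) are the classical counting identity and its square-discriminant reformulation, part (c) is orbit-stabilizer plus the rewriting $k^{2}-\lambda v=k-\lambda>0$, and part (d) is the Davies-type double count you give, so nothing is missing.
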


\begin{lemma}\label{lem:dio}{\rm \cite[Section 2]{a:Skinner-dioph}}
	Let $p$ be an odd prime. Then the solutions of the Diophantine equation $x^2=4p^n-4p+1$ are
	\begin{enumerate}[\rm \quad (a)]
		\item $p=3$ and $(n,x)=(1,1), (2,5),(5,31)$;
		\item $p=5$ and $(n,x)=(1,1), (2,9), (7,559)$;
		\item $p\geq 7$ and $(n,x)=(1,1)$, $(2,2p-1)$.
	\end{enumerate}
\end{lemma}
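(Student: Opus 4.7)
The plan is to substitute $x = 2y + 1$, legitimate since $4p^n - 4p + 1$ is odd, rewriting the equation as
\begin{equation*}
y(y+1) = p(p^{n-1}-1).
\end{equation*}
Because $\gcd(y,y+1)=1$ and $p \nmid p^{n-1}-1$, exactly one of $y,y+1$ is divisible by $p$, and with multiplicity one. Setting $y=pa$ in the first case or $y+1=pa$ in the second, with $\gcd(a,p)=1$ and $a\geq 0$, the equation reduces to
\begin{equation*}
pa^2 + a + 1 = p^{n-1} \quad \text{(Case A)} \qquad \text{or} \qquad pa^2 - a + 1 = p^{n-1} \quad \text{(Case B)}.
\end{equation*}

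I would first dispose of $n=1$ and $n=2$ by direct inspection: $n=1$ forces $a=0$ and hence $x=1$, while $n=2$ admits only Case~B with $a=1$, yielding $x = 2p-1$. For $n\geq 3$ the core idea is an iterative $p$-adic descent. Reducing Case~A modulo $p$ forces $a \equiv -1 \pmod p$; writing $a = pb - 1$ and dividing through by $p$ rewrites the equation as
\begin{equation*}
p^2 b^2 - 2pb + b + 1 = p^{n-2},
\end{equation*}
which has the same shape as before but with the right-hand exponent reduced by one. A second descent forces $b \equiv -1 \pmod p$, and so on; Case~B is handled symmetrically with $a \equiv 1 \pmod p$. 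Each descent trades one unit of exponent on the right for an accumulating power of $p$ in the new coefficients, and eventually the residual equation becomes a low-degree polynomial identity in $p$ admitting only a finite and explicit list of prime solutions.

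For $p \geq 7$ a short computation should suffice. Substituting $a = kp - 1$ into Case~A at $n = 3$ produces $k^2 p^2 - (2k+1)p + (k+1) = 0$, which for $k = 1$ reduces to $(p-1)(p-2) = 0$, for $k = 2$ has negative discriminant, and for $k \geq 3$ has strictly positive left side; the same recipe dispatches Case~B and the values $n = 4,5,\ldots$ by straightforward size comparisons using the a priori inequality $a < p^{(n-1)/2}$ coming directly from the defining equation. The main obstacle is the small-$p$ regime, where the exceptional solutions $(n,x) = (5,31)$ for $p = 3$ and $(n,x) = (7,559)$ for $p = 5$ genuinely occur and must be isolated rather than excluded. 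There the plan is to iterate the descent further until the variable is forced into such a narrow range by $a \leq p^{(n-1)/2}$ that only finitely many candidates remain, then verify by direct substitution; alternatively one may invoke Lebesgue--Nagell type results on equations of the form $X^2 + D = p^n$ to guarantee finiteness and effective computability of the full solution set.
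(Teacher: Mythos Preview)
The paper does not prove this lemma; it merely quotes the result from Skinner's paper, so there is no in-paper argument to compare against. Your substitution $x=2y+1$ and the reduction to $y(y+1)=p(p^{n-1}-1)$, together with the observation that $p$ divides exactly one of $y,y+1$ to the first power, are correct and constitute the natural opening. The resulting pair of equations $pa^{2}\pm a+1=p^{n-1}$ and the congruence $a\equiv\mp 1\pmod{p}$ are also right, and for $n\leq 3$ your size-versus-congruence argument does work cleanly: from $pa^{2}<p^{n-1}$ one gets $a<p^{(n-2)/2}$, which for $n=3$ forces $a<\sqrt{p}$, incompatible with $a\equiv\pm 1\pmod{p}$ and $a\geq 1$ once $p\geq 3$.

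The genuine gap is everything after that. For $n\geq 4$ the crude bound $a<p^{(n-2)/2}$ no longer collides with $a\equiv\pm 1\pmod p$, and your proposal to ``iterate the descent further'' is not carried out: each step changes the shape of the equation (the new leading coefficient becomes $p^{2}$, then $p^{3}$, etc.), so one must track these changing polynomials and prove termination uniformly in $n$, which is exactly the nontrivial content of Skinner's argument. For $p=3$ and $p=5$ you correctly identify the exceptional solutions as the obstacle, but then defer to ``Lebesgue--Nagell type results'' without naming a specific theorem that applies to $X^{2}+D=p^{n}$ with your particular $D$; this is a citation in disguise, not a proof. If you want a self-contained treatment you must either complete the descent explicitly (bounding $n$ effectively and checking the finite residue) or cite Skinner, as the paper does.
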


\begin{lemma}\label{lem:diff}{\rm \cite[Theorems~VI.14.39(a) and VI.4.16(b)]{b:Beth-I}}
	Let $D$ be an abelian $(v,k,\lambda)$-difference set. If $D$ admits the multiplier  $-1$, then $v$ and $\lambda$ are even, and $k-\lambda$ is a square.
\end{lemma}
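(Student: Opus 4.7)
The plan is to establish the three conclusions in the natural order $v$ even, $\lambda$ even, and $k-\lambda$ a square, since the bulk of the work lies in showing $v$ is even and the other two assertions follow painlessly.

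To show $v$ is even I would argue by contradiction: suppose $v$ is odd. The hypothesis that $-1$ is a multiplier means $-D=D+u$ for some $u\in T$. Because $|T|=v$ is odd, multiplication by $2$ is a bijection on $T$, so the equation $2s=-u$ has a unique solution $s\in T$. Replacing $D$ by the equivalent translate $D'=D+s$ normalizes the situation to $-D'=D'$. Now I would fix any nonzero $x\in T$ and look at the set of ordered representations $S_{x}=\{(a,b)\in D'\times D':a-b=x\}$, which has size $\lambda$ by the defining property of a difference set. The map $\iota\colon(a,b)\mapsto(-b,-a)$ preserves $a-b$ and, because $-D'=D'$, sends $S_{x}$ into itself; it is an involution whose fixed points satisfy $a=-b$, and combined with $a-b=x$ this forces $a=x/2$, $b=-x/2$. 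Thus a fixed point exists in $S_{x}$ exactly when $x/2\in D'$. Orbit-counting modulo $2$ yields
\[
\lambda\equiv\bigl[\,x/2\in D'\,\bigr]\pmod 2\quad\text{for every }x\in T\setminus\{0\},
\]
and the right-hand parity must be independent of $x$. Hence either every nonzero element of $T$ lies in $D'$ (forcing $k\geq v-1$) or none does (forcing $k\leq 1$), both of which contradict $2\leq k\leq v-2$ for a proper difference set. Therefore $v$ is even.

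With $v$ even in hand, Lemma~\ref{lem:six}(a) gives $\lambda$ even: from $k(k-1)=\lambda(v-1)$ and the oddness of $v-1$, $\lambda$ inherits the $2$-adic valuation of $k(k-1)$, which is always at least $1$. Finally, $k-\lambda$ being a perfect square is a direct application of the even-$v$ case of the Bruck--Ryser--Chowla theorem to the symmetric $(v,k,\lambda)$ design $\dev(D)$. The hard part is the first step: the normalization $-D'=D'$ rests on the invertibility of $2$ in $T$, so the parity argument would not get off the ground without first performing this translation.
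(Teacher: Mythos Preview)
The paper does not supply its own proof of this lemma; it simply cites \cite[Theorems~VI.14.39(a) and VI.4.16(b)]{b:Beth-I}. So there is no internal argument to compare against, and the question reduces to whether your proof is sound.

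Your argument is correct in substance. The normalization step has a harmless sign slip: from $-D=D+u$ and $D'=D+s$ one gets $-D'=D+(u-s)$, so the condition $-D'=D'$ requires $2s=u$, not $2s=-u$; either way the invertibility of $2$ in $T$ (since $v$ is assumed odd) gives the needed $s$. The involution argument on $S_x$ is clean and forces the dichotomy $k\leq 1$ or $k\geq v-1$, which indeed requires the tacit nontriviality of $D$ (as you note with ``proper''); this assumption is standard in the cited source and is certainly in force throughout the paper. For the step $\lambda$ even, your appeal to Lemma~\ref{lem:six}(a) is slightly off in that the lemma as stated carries a flag-transitivity hypothesis irrelevant here; but the identity $k(k-1)=\lambda(v-1)$ holds for any $(v,k,\lambda)$-difference set by double counting, so the conclusion stands. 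The final step via Bruck--Ryser--Chowla is exactly the content of the second cited theorem.
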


\section{Proof of the main result}\label{sec:proof}

Suppose that $\Dmc$ is a nontrivial symmetric $(v,k,\lambda)$ design with $\lambda$ prime admitting a flag-transitive and point-primitive automorphism group $G$ of affine type, that is to say, the socle $T$ of $G$ is an elementary abelian $p$-group of order $p^d$ with $d \geq 1$.
The points of $\Dmc$ can be identified with the vectors of the $d$-dimensional vector space $V:=V_d(p)$ over the finite field $\mathbb{F}_p$ of size $p$, and
$G=TH \leq \AGL_d(p)=\AGL(V)$, where $T \cong (\mathbb{Z}_p)^d$ is the translation group, and $H:=G_{0}$ (the point-stabilizer of the point $0$) is an irreducible subgroup of $\GL_d( p)$. For each divisor $n$ of $d$, the group $\GaL_n(p^{d/n})$ has a natural irreducible action on $V$. Choose $n$ to be minimal such that $H \leq \GaL_n(p^{d/n})$ in this action, and write $q=p^{d/n}$. Thus $H \leq \GaL_n(q )$ and $v=p^d=q^n$. In this section, we aim to prove Theorem~\ref{thm:main}. Recall from the introduction that  the symmetric $(v,k,\lambda)$ designs with $\lambda = 2,3$ or $\gcd(k,\lambda)=1$ have been studied in  \cite{a:Zhou-lam3-affine,a:Regueiro-reduction,a:Biliotti-CP-sym-affine}. We start by considering the case where $\lambda=p$, and in Proposition~\ref{prop:lam-p}, we show that this case leads to no possible parameter sets unless $\lambda=p=2$ in which case $v=16$. Therefore, in the rest of paper, we only need to deal with the case where $p$ is odd and $\lambda$ divides $k$ with $p\neq \lambda\geq 5$.

\begin{proposition}\label{prop:lam-p}
	If $\Dmc$ is a nontrivial symmetric $(p^{d},k,\lambda)$ design with $\lambda$ prime admitting a flag-transitive affine automorphism group $G\leq \AGL_d(p)$, then one of the following holds:
	\begin{enumerate}[\rm \quad (a)]
		\item $\lambda=p=2$ and $\Dmc$ has parameters $(16,6,2)$.
		\item $\lambda \neq p$, and the following hold:
		\begin{enumerate}[\rm (1)]
			\item  $\lambda$ and $1-4\lambda$ are squares in $\Fbb_{p}$;
			\item $k-\lambda$ is not a square;
			\item $G_{0}=G_{B}$ for some block $B$ of $\Dmc$ not containing $0$.
		\end{enumerate}
	\end{enumerate}
\end{proposition}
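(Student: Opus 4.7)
My plan is to split the argument by whether $\lambda=p$, and to reduce each claim to a combination of Diophantine input (Lemmas~\ref{lem:six} and~\ref{lem:dio}) and structural input (Lemma~\ref{lem:diff} plus a Schur--Zassenhaus analysis of the regular action). Throughout, write $G=TH$ with $T\cong\Fbb_p^d$ the translation subgroup acting regularly on points and $H=G_0\leq\GL_d(p)$, so that $\Dmc$ is developed from a $(p^d,k,\lambda)$-difference set $D\subseteq V:=\Fbb_p^d$ with $0\in D$.

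For part~(a), I would apply Lemma~\ref{lem:six}(b) to obtain $(2k-1)^2=4p^{d+1}-4p+1$. For odd $p$, Lemma~\ref{lem:dio} (with $n=d+1$) restricts $d$ to at most three values: $\{0,1\}$ for $p\geq 7$, $\{0,1,6\}$ for $p=5$, and $\{0,1,4\}$ for $p=3$. The values $d\leq 1$ give trivial or complete designs, leaving only $(81,16,3)$ and $(15625,280,5)$. The case $(81,16,3)$ has $\gcd(\lambda,k)=1$, so by the main result of~\cite{a:Biliotti-CP-sym-affine} we would be forced into $G\leq\AGaL_1(81)$ with a Paley-shape base block $\langle\omega^5\rangle\subseteq\Fbb_{81}^*$; a direct difference count on this candidate set rules it out. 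For $(15625,280,5)$ we have $\lambda\mid k$ with $t=k/\lambda=56$ satisfying $\lambda t^2-t+1=p^d$ uniquely, and showing that no such design admits a flag-transitive affine group likely requires combining Lemma~\ref{lem:diff} (which gives $-I\notin H$ via $-1$ not being a multiplier, since $v$ is odd) with a finer analysis of $H$-orbits on $V\setminus\{0\}$ using Lemma~\ref{lem:six}(d). For $p=2$ the equation is $(2k-1)^2=2^{d+3}-7$, the Ramanujan--Nagell equation, forcing $d\in\{0,1,2,4,12\}$; the small cases are trivial, $d=4$ gives $(16,6,2)$, and $d=12$ gives $(4096,91,2)$ with $\gcd(\lambda,k)=1$, which again falls under~\cite{a:Biliotti-CP-sym-affine} and is excluded by a short inspection.

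For part~(b) (so $\lambda\neq p$) I would first observe that $p$ must be odd: for $p=2$ the automorphism $x\mapsto -x$ of $T$ is the identity, hence a trivial multiplier of $D$, and Lemma~\ref{lem:diff} would then force $\lambda$ even, i.e.\ $\lambda=2=p$, contrary to~(b). Since $p$ is odd and $v=p^d$ is odd, the same argument applied to $-I\in\GL_d(p)$ shows $-I\notin H$ (otherwise $-D$ would be a block through $0$ and $-1$ a non-trivial multiplier). For condition~(1), reducing $k(k-1)=\lambda(v-1)\equiv -\lambda\pmod p$ and completing the square yields $(2k-1)^2\equiv 1-4\lambda\pmod p$, so $1-4\lambda\in(\Fbb_p^*)^2$. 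Using $\lambda\mid k$, write $k=\lambda t$ so $\lambda t^2-t+1=p^d$; reducing modulo $p$ gives $\lambda\equiv (t-1)t^{-2}\pmod p$, which combined with a Bruck--Ryser--Chowla / Hilbert-symbol computation at $p$ (using $-I\notin H$ to control the scalar subgroup of $H$) promotes $\lambda$ itself to a square in $\Fbb_p$. Condition~(2) is the cleanest: since $k-\lambda=\lambda(t-1)$, and reducing $\lambda t^2-t+1=p^d$ modulo $\lambda$ gives $t\equiv 1-p^d\pmod\lambda$, the congruence $t\equiv 1\pmod\lambda$ would force $\lambda\mid p^d$, hence $\lambda=p$; so in~(b) we have $\lambda\nmid t-1$, whence $v_\lambda(k-\lambda)=1$ is odd and $k-\lambda$ cannot be a perfect integer square. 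For condition~(3), note that $p\nmid k$ in~(b) (else $p\mid t$ forces $p\mid p^d-1$, absurd), so $T_B=1$ for every block $B$ and $T$ acts block-regularly; Schur--Zassenhaus inside $G=T\rtimes H$ then makes every block-stabilizer $G$-conjugate to $H$, producing a $G$-equivariant bijection $\Bmc\to\Pmc$, $B\mapsto t(B)$, with $G_B=G_{t(B)}$; flag-transitivity makes $H$ transitive on the $k\geq 2$ blocks through $0$, so none of them is $H$-fixed, and the unique block $B^{*}$ with $G_{B^{*}}=G_0$ must satisfy $0\notin B^{*}$.

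The main obstacles are the exclusion of the three sporadic parameter sets in part~(a)---especially $(15625,280,5)$, which survives all standard arithmetic sieves (Lemmas~\ref{lem:six}, \ref{lem:dio}, BRC, and the multiplier theorem)---and the assertion in~(1) that $\lambda$ itself is a square in $\Fbb_p$, which genuinely goes beyond the easy $1-4\lambda$-square identity and requires a global quadratic input tying $\lambda$ to the structure of $H$.
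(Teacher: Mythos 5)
Your skeleton for part (a) (Lemma \ref{lem:six}(b) combined with Lemma \ref{lem:dio} to isolate $(81,16,3)$ and $(15625,280,5)$ for odd $p$) matches the paper, and your proof of (b)(2) is correct and in fact cleaner than the paper's: from $\lambda t^2-t+1=p^d$ you get $t\equiv 1-p^d\pmod\lambda$, so $\lambda\nmid t-1$ and the $\lambda$-adic valuation of $k-\lambda=\lambda(t-1)$ is exactly $1$, avoiding the paper's appeal to \cite[Lemma VI.6.1]{b:Beth-I}. However, there are three genuine gaps. The largest is the exclusion of $(15625,280,5)$, which you leave explicitly open; this is the bulk of the paper's proof. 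It fixes the minimal $n$ with $G_0\leq\GaL_n(5^{6/n})$, uses Lemma \ref{lem:diff} to get $-1\notin G_0\cap Z$, and then eliminates $n=2$ because the Sylow $2$-subgroups of $\SL_2(5^3)$ are quaternion with $-1$ as unique involution, $n=3$ because \cite[Tables 8.3--8.4]{b:BHR-Max-Low} force $\SU_3(5)\unlhd G_0$ while the resulting $G_0$-orbit lengths ($3024$ or multiples of $3150$) are incompatible with a block of length $280$, and $n=6$ via \cite{a:BambergPenttila-2008}. Without something of this kind, part (a) is not proved. Second, in (b)(1) the squareness of $\lambda$ in $\Fbb_p$ cannot come from a Bruck--Ryser--Chowla/Hilbert-symbol computation at $p$: since $p\nmid\lambda$ and also $p\nmid k-\lambda$ (note $k-\lambda\equiv k^2\pmod{p^d}$ and $p\nmid k$), the local condition at the odd prime $p$ is automatically satisfied and yields nothing. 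The paper instead invokes \cite[Theorem 4.5]{b:Lander}: if $\left(\lambda/p\right)=-1$ then $\lambda\nmid k-\lambda$, contradicting $\lambda\mid k$. You correctly identify this as the hard point but do not supply a working argument.

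Third, the Schur--Zassenhaus step in your proof of (b)(3) is invalid: $T$ is a normal elementary abelian $p$-subgroup and $p$ may well divide $|H|$, so $T$ is not a normal Hall subgroup and complements to $T$ in $G$ need not be conjugate (they are classified by $H^1(H,V)$, which can be nonzero). The correct use of $\gcd(k,v)=1$ --- and this is what \cite[Lemma VI.2.5]{b:Beth-I} encodes --- is to translate the base block to $B=D+x$ with $\sum_{b\in B}b=0$; then any $\sigma\in G_0$ sends $B$ to $B+c$ with $kc=0$, hence $c=0$, so $G_0\leq G_B$, and equality follows from $|G_B|=|G|/v=|G_0|$. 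With that repair, your concluding step ($0\notin B$ because $G_B$ is transitive on $B$) agrees with the paper. Finally, your Ramanujan--Nagell treatment of $p=2$ and the reduction of $(81,16,3)$ and $(4096,91,2)$ to \cite{a:Biliotti-CP-sym-affine} are legitimate alternatives to the paper's citations of \cite{a:Regueiro-reduction} and \cite{a:Zhou-lam3-affine}, but the assertion that $\langle\omega^5\rangle\subseteq\Fbb_{81}^*$ fails to be a $(81,16,3)$-difference set is stated, not verified.
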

\begin{proof}
	We may assume by \cite[Theorem 1]{a:Biliotti-CP-sym-affine} that $\lambda$ is a prime divisor of $k$. Let first $\lambda=p$. Then we show that $\Dmc$ has parameters $(16,6,2)$.
	If $p=2$, then it follows from  \cite{a:Regueiro-reduction} that $\Dmc$ is the symmetric design with parameters $(16,6,2)$. If $p$ is an odd prime, then since $v=p^{d}$ and $\Dmc$ is a nontrivial design, we have that $d\geq 2$. Moreover, by Lemma \ref{lem:six}(b) we know  that $4\lambda(v-1)+1=4p^{d+1}-4p+1$ is a square. We now apply Lemma \ref{lem:dio} and conclude that $(p,d)=(3,4)$ or $(5,6)$, equivalently, $\Dmc$ is a symmetric design with parameter set $(81,16,3)$ or $(15625,280,5)$, respectively. By \cite[Lemma 2.4]{a:Zhou-lam3-affine}, we have no symmetric design with parameter set $(81,16,3)$. In the latter case, $n=1,2,3$ or $6$, and by Lemma~\ref{lem:diff}(a), we have that $-1 \notin G_{0} \cap Z$, where $Z$ denotes the center of $\GL_{n}(5^{6/n})$. Clearly, $n \neq 1$. If $n = 2$, then $|G_{0} \cap \SL_{2}(5^{3})|$ is even as $|\GaL_{2}(5^{3}):\SL_{2}(5^{3})|=2^{2}\cdot 3 \cdot 31$, and since the Sylow $2$-subgroups of $\SL_{2}(5^{3})$ are quaternions of order $8$ with $-1$ as their unique involution, $-1 \in G_{0} \cap Z$ , which is a contradiction. If $n=3$, then $|G_{0} \cap Z|=1$ or $3$ by Lemma~\ref{lem:diff} since $-1 \notin G_{0}$. Therefore, $\SU_{3}(5) \unlhd G_{0}$ by \cite[Tables 8.3--8.4]{b:BHR-Max-Low}, and hence $G_{0}=G_{B}$ for some block $B$ by \cite[Theorem 2.14(i)]{a:Kantor-rank3}. Further, $0 \notin B$ since $G_{B}$ acts transitively on $B$. Thus $B$ is a $G_{0}$-orbit of length $280$. However, this is impossible since the $G_{0}$-orbits on $V^{\ast}$ have length $3024$ or a multiple of $3150$ since $\SU_{3}(5) \unlhd G_{0}$. Thus, $n=6$ and $G_{0}\leq \GL_{6}(5)$ with $G_{0} \cap Z=1$ and $280 \mid | G_{0}|$. However, $\GL_{6}(5)$ does not have such subgroups by \cite[Theorem 3.1]{a:BambergPenttila-2008} or \cite[Tables 8.24-8.25]{b:BHR-Max-Low}. Therefore, part (a) holds.
	
	Let now $\lambda \neq p$. Then by Euler's Theorem, $\lambda^{\frac{p-1}{2}}\equiv \left(\lambda/p\right) \pmod{p}$, where $\left(\lambda/p\right)$ denotes the Legendre Symbol. If $\left(\lambda/p\right)=-1$, then $\lambda$ does not divide $k-\lambda$ by \cite[Theorem 4.5]{b:Lander}, but this is not the case as $\lambda$ is assumed to be a divisor of $k$. Therefore, $\left(\lambda/p\right)=1$, and hence $\lambda$ is a square in $\Fbb_{p}$. Further, $1-4\lambda$ is a square in $\Fbb_{p}$ by Lemma \ref{lem:six}(b). If $k-\lambda$ is a square, then $\lambda^{2}$ divides $k-\lambda$, and so \cite[Lemma VI.6.1]{b:Beth-I} implies that $\lambda^{2}$ divides $p^{d}-k-(k-\lambda)$. Thus $\lambda$ divides $p^{d}-k$, and since $\lambda$ is a divisor of $k$, it follows that $\lambda$ divides $p^{d}$, whereas $\lambda \neq p$. Finally, $k=(p^{d}-1)\lambda/(k-1)$ is prime to $p$ since $\lambda \neq p$, and hence $G_{0}=G_{B}$ for some block $B$ of $\Dmc$ by \cite[Lemma VI.2.5]{b:Beth-I}. Since $G_{B}$ acts transitively on $B$, the block $B$ does not contain $0$.
\end{proof}

\begin{corollary}\label{cor:irr}
	Let $\Dmc$ be a nontrivial symmetric $(v, k, \lambda)$ design with $\lambda$ prime and that $G$ be a flag-transitive and point-primitive group of automorphisms of $\Dmc$ of affine type. Let also $G=TG_{0} \leq \AGL (V)$, where $T$ is the translation group. Then $G_{0}$ is an irreducible subgroup of $\GL(V)$.
\end{corollary}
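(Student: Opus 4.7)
The plan is to translate the point-primitivity of $G$ on $V$ into the linear-algebraic statement that $G_{0}$ has no proper nonzero invariant subspace. This is a standard correspondence for affine permutation groups, but I will spell it out since it is exactly what the corollary asserts; the argument does not in fact depend on the design-theoretic content of Proposition~\ref{prop:lam-p}, and uses only the structure $G = TG_{0} \leq \AGL(V)$ together with primitivity.

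First I will identify the points of $\Dmc$ with the vectors of $V$ so that the translation group $T$ acts regularly and $0$ is fixed by $G_{0}$. Under this identification, the subgroups of $T$ correspond precisely to the $\mathbb{F}_{p}$-subspaces of $V$, and the conjugation action of $G_{0}$ on $T$ coincides with the natural linear action of $G_{0} \leq \GL(V)$ on $V$.

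Next I will set up the correspondence between blocks of imprimitivity of $G$ on $V$ containing $0$ and $G_{0}$-invariant subspaces of $V$. If $B$ is such a block, the $T$-translates of $B$ partition $V$ (since $T$ is transitive), which forces $B$ to be a coset of a subgroup of $T$; because $0 \in B$, the set $B$ is itself a subgroup of $T$, hence an $\mathbb{F}_{p}$-subspace of $V$. The inclusion $G_{0} \leq G_{B}$ then upgrades this subspace to a $G_{0}$-invariant one. Conversely, any $G_{0}$-invariant subspace $W \leq V$ produces a block of imprimitivity for $G = TG_{0}$, whose $G$-orbit is exactly the set of cosets of $W$ in $V$.

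Finally, point-primitivity of $G$ on the set of points says that the only blocks containing $0$ are $\{0\}$ and $V$, so the only $G_{0}$-invariant subspaces of $V$ are $\{0\}$ and $V$; this is precisely the irreducibility of $G_{0}$ on $V$. There is no real obstacle here, since the content of the proof is only the block-to-subspace dictionary just described.
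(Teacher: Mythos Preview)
Your proposal is correct and takes essentially the same approach as the paper: both arguments exploit the fact that a proper nonzero $G_{0}$-invariant subspace $W$ of $V$ yields a nontrivial system of imprimitivity for $G$. The paper phrases this via the maximal-subgroup characterisation of primitivity (observing that $G_{0} < T_{W}G_{0} < G$), while you spell out the equivalent block-to-subspace correspondence directly; the content is the same.
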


\begin{proof}
	Suppose that $G_{0}$ preserves a proper subspace $W$ of $V$, then $G_{0}$ is a proper subgroup of $X_W G_{0}$,  that is to say $G_{0}<T_WG_{0}<G$.
	This yields $G_{0}$ is not maximal in $G$, or equivalently $G$ is not point-primitive, which is a contradiction to our assumption.
\end{proof}

\begin{table}
	\centering
	\small
	\caption{Size of the subsets $\mathcal{S}$ and $\mathcal{N}$ in Lemma \ref{lem:C8}.}\label{tbl:SingNonSing}
	\resizebox{\textwidth}{!}{
		\begin{tabular}{llll}
			\noalign{\smallskip}\hline\noalign{\smallskip}
			$X$ & $|\Smc|$ & $|\Nmc|$ & $\gcd (|\Smc|,|\Nmc)|$ \\
			\noalign{\smallskip}\hline\noalign{\smallskip}
			$\SU_{n}(q^{1/2})$, $n\geq 3$ & $(q^{n/2}-(-1)^{n})(q^{n/2-1}-(-1)^{n-1})$ & $%
			q^{n/2-1}(q^{n/2}-(-1)^{n})(q^{1/2}-1)$ & $(q^{n/2}+1)(q^{1/2}-1)$ \\
			$\Omega _{n}^{\circ }(q)$, $n\geq 3$ & $q^{n-1}-1$ & $q^{n-1}(q-1)$ & $q-1$
			\\
			$\Omega _{n}^{+}(q)$, $n\geq 4$ & $(q^{n/2}-1)(q^{n/2-1}+1)$ & $%
			q^{n/2-1}(q^{n/2}-1)(q-1)$ & $2(q^{n/2}-1)$ \\
			$\Omega _{n}^{-}(q)$, $n\geq 4$ & $(q^{n/2}+1)(q^{n/2-1}-1)$ & $%
			q^{n/2-1}(q^{n/2}+1)(q-1)$ & $(q^{n/2}+1)(q-1)$\\
			\noalign{\smallskip}\hline\noalign{\smallskip}
		\end{tabular}
	}
\end{table}
	\begin{lemma}\label{lem:C8}
		Let $X$ be one of the classical groups $\SL_n(q)$, $\Sp_n(q)$,  $\SU_n(q^{1/2})$ and $\Omega^{\e}_n(q)$ with $\e \in \{\pm, \circ\}$, and let $V=V_n(q)$ be the underlying vector space of minimal dimension $n$ such that $G_0 \leq \N_{\GaL_n(q)}(X)$. Then $G_0$ does not contain $X$.
	\end{lemma}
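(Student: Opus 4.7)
My plan is to argue by contradiction, assuming $X \leq G_0$. By Proposition~\ref{prop:lam-p}(b)(3), there is a block $B$ with $0 \notin B$ and $G_0 = G_B$, so $G_0$ (hence $X$) acts transitively on $B$. Thus $B$ is a single $G_0$-orbit on $V^{\ast} := V \setminus \{0\}$, and in particular a union of $X$-orbits on $V^{\ast}$.

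If $X \in \{\SL_n(q), \Sp_n(q)\}$, then $X$ is transitive on $V^{\ast}$, so $B = V^{\ast}$ and $k = v - 1$, contradicting the non-triviality of $\Dmc$. If instead $X \in \{\SU_n(q^{1/2}), \Omega^{\e}_n(q)\}$, then the set $\Smc$ of singular (isotropic) vectors and the set $\Nmc$ of non-singular (anisotropic) vectors are preserved setwise by $\N_{\GaL_n(q)}(X)$, with cardinalities as in Table~\ref{tbl:SingNonSing}; in the unitary case and in the orthogonal case over fields of even order these are single $X$-orbits, while for orthogonal groups over fields of odd order $\Nmc$ further splits into two $X$-orbits of size $|\Nmc|/2$ indexed by the square class of the quadratic form value, a refinement dealt with by a completely analogous arithmetic argument. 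Hence $k = |B| \in \{|\Smc|, |\Nmc|, v-1\}$, and the last is trivial and excluded. Set $d = \gcd(|\Smc|, |\Nmc|)$ as given in the last column of Table~\ref{tbl:SingNonSing}, $s = |\Smc|/d$ and $t = |\Nmc|/d$; then $\gcd(s, t) = 1$ and $v - 1 = d(s + t)$. Since $\gcd(s+t, s) = \gcd(s+t, t) = 1$, the integrality of $\lambda = k(k-1)/(v-1)$ forces
\[
    (s+t) \mid (ds - 1) \quad \text{when } k = |\Smc|, \qquad (s+t) \mid (dt - 1) \quad \text{when } k = |\Nmc|.
\]

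The final step, which is the principal obstacle, is a case-by-case verification that these divisibilities cannot hold for any of the four families (outside edge values of $k$ giving trivial designs). The typical estimate is a direct size comparison: for $X = \Omega^{\circ}_n(q)$ one has $s + t = (q^n - 1)/(q - 1) = q^{n-1} + \cdots + 1$, which strictly exceeds $ds - 1 = q^{n-1} - 2$ for $n \geq 2$; polynomial long division similarly shows $(s + t) \nmid (dt - 1)$. Together with the explicit cardinalities in Table~\ref{tbl:SingNonSing}, entirely analogous estimates handle the remaining three families $\SU$, $\Omega^{+}$ and $\Omega^{-}$, and the refinement for orthogonal groups over odd fields (replacing $t$ by $t/2$), producing a contradiction in every subcase.
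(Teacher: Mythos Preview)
Your treatment of $\SL_n(q)$ and $\Sp_n(q)$ is fine, and in fact cleaner than the paper's appeal to $2$-transitivity and Kantor's classification: transitivity of $X$ on $V^\ast$ forces $B=V^\ast$, hence $k=v-1$ and the design is trivial.

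The real gap is in the unitary case. Your assertion that $\mathcal{N}$ is a single $X$-orbit for $X=\SU_n(q^{1/2})$ is false. The Hermitian norm $h(v,v)$ of an anisotropic vector lies in $\Fbb_{q^{1/2}}^{\ast}$, and isometries preserve it; moreover, passing from $\U_n$ to $\SU_n$ splits each norm class further. Concretely (as the paper computes for $n$ odd, which is forced by Lemma~\ref{lem:diff}), the $\SU_n(q^{1/2})$-orbits on $\mathcal{N}$ have length $|\mathcal{N}|/(q-1)$, so there are $q-1$ of them. Since $G_0$ need not contain all scalars of $\Fbb_q^{\ast}$, the $G_0$-orbit $B$ can have size $c\,|\mathcal{N}|/(q-1)$ for any divisor $c$ of $q-1$. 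Hence your dichotomy $k\in\{|\mathcal{S}|,|\mathcal{N}|\}$ is unjustified, and the divisibility argument you set up (with $s=|\mathcal{S}|/d$, $t=|\mathcal{N}|/d$) does not cover all possible values of $k$. The same objection applies, with $|\mathcal{N}|/2$ in place of $|\mathcal{N}|$, to the orthogonal case in odd characteristic, which you flag but do not carry out.

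The paper circumvents this by never assuming $B$ equals $\mathcal{S}$ or $\mathcal{N}$. Instead it uses Lemma~\ref{lem:six}(d): since $\mathcal{S}$ and $\mathcal{N}$ are each unions of $G_0$-orbits, $k$ divides $\lambda|\mathcal{S}|$ and $\lambda|\mathcal{N}|$, hence $k\mid\lambda\gcd(|\mathcal{S}|,|\mathcal{N}|)$. Comparing this bound with the actual possible orbit lengths (and the fact that $\lambda\neq p$) forces $k=|\mathcal{S}|$ in every case, after which solving for $\lambda$ gives a value that is either composite or violates Lemma~\ref{lem:six}(c). You also omit the case $n=2$, which the paper handles separately (and where $\mathcal{S}$ may be empty).
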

	\begin{proof}
		Assume to the contrary that $G_{0}$ contains $X$. If $X$ is one of the groups $\SL_n(q)$ and $\Sp_n(q)$, then it follows from \cite[Theorem and Appendix 1]{a:Liebeck-HA-rank3} that $G$ acts $2$-transitively on $V$. Note by our assumption that $p$ is odd and $G_{0} \not \leq\GaL_1(p^d )$. Then the groups $\SL_n(q)$ and $\Sp_n(q)$ are ruled out by \cite{a:Kantor-85-2-trans}, as there is no symmetric design with $\lambda \geq 5$ prime. Hence, $X$ is one of the groups $\SU_n(q^{1/2})$ and $\Omega^{\e}_n(q)$ with $\e \in \{\pm, \circ\}$. \smallskip

		\noindent Let $B$ be a block of $\Dmc$ preserved by $G_{0}$ by Proposition \ref{prop:lam-p}(b3). Hence, $B$ is a non-trivial $G_{0}$-orbit since $G$ acts flag-transitively on $\Dmc$.
		Assume that $n>2$. Let $\mathcal{S}$ and $\mathcal{N}$ be the set of non-zero singular points and the set of non-singular points for the $X$-invariant hermitian form in the unitary
		case or the $X$-invariant quadratic form in the orthogonal one, respectively. Then the size of $\mathcal{S}$ and $\mathcal{N}$ recorded in Table \ref{tbl:SingNonSing} can be determined by \cite[Lemma 10.4 and Theorem 11.5]{b:Taylor92}. Then both $\mathcal{S}$ and $\mathcal{N}$ are a union of $G_{0}$-orbits, and so Lemma~\ref{lem:six}(d) implies that $k/\lambda$ divides $\gcd(|\mathcal{S}|,|\mathcal{N}|)$, where $k$ is the length of a suitable $G_{0}$-orbit.
		If $X=\SU_{n}(q^{1/2})$, then $n$ is odd by Lemma~\ref{lem:diff}. Since the elements of $X$ are isometries, the length of $X$-orbits on $V\setminus \{0\}$ is either $(q^{n/2}+1)(q^{n/2-1}-1)$, or $q^{n/2-1}(q^{n/2}+1)(q^{1/2}-1)/(q-1)$, and since $X\unlhd G_{0}$, the $G_{0}$-orbits on $V\setminus \{0\}$ have length $(q^{n/2}+1)(q^{n/2-1}-1)$ or $c q^{n/2-1}(q^{n/2}+1)(q^{1/2}-1)/(q-1)$ for some divisor $c$ of $q-1$. Note that $\lambda \neq p$ by Proposition 3.1. Then the facts that $k/\lambda$ divides $\gcd (|\mathcal{S}|,|\mathcal{N}|)=(q^{n/2}+1)(q^{1/2}-1)$ and that $k$ is the length of a $G_{0}$-orbit force $k=|\mathcal{S}|=(q^{n/2}+1)(q^{n/2-1}-1)$. Thus $(q^{n/2}+1)(q^{n/2-1}-1)/\lambda$ divides $(q^{n/2}+1)(q^{1/2}-1)$, and so $\lambda =(q^{n/2-1}-1)/(q^{1/2}-1)$. Since $n-1\geq 2$ is even and $q$ is odd, we conclude that $\lambda=2$, which is not the case. Therefore, $G_{0}$ does not contain  $\SU_{n}(q^{1/2})$.
		A similar argument can be applied to the case where $X=\Omega _{n}^{\e }(q)$ with $\e\in\{\pm,\circ\}$. In this case, we obtain $k=|\mathcal{S}|$, where $|\Smc|$ is given in Table \ref{tbl:SingNonSing}, and so
		$\lambda $ is $(q^{n/2-1}+1)/2$, $(q^{n/2-1}-1)/(q-1)$ or $(q^{n-1}-1)/(q-1)$ when $\e $ is $+$, $-$ or $\circ $, respectively. If $\e=\circ$, then since $n-1\geq 2$ is even and $q$ is odd, we obtain $\lambda=2$, which is not the case. If $\e= +$ or $-$, then $k/\lambda$ is $2(q^{n/2}-1)$ or $(q^{n/2}+1)(q-1)$, respectively, but both possibilities violate Lemma \ref{lem:six}(c).
		Therefore, $n=2$. The case where $X= \SU_{2}(q^{1/2})$ is ruled out by Lemma~\ref{lem:diff}.
		Hence, $X= \Omega_{2}^{\e }(q)\cong \mathbb{Z}_{(q-\e 1)/2}$ with $\e =\pm$. If $\e =-$, then $G_{0}\leq \N_{\GaL_{2}(q)}(X)\leq \GaL_{1}(q^{2})$, which violates our assumptions. Thus $\e =+$, and hence $X\cong \mathbb{Z}_{(q-1)/2}$.
		Let $\omega$ be a primitive element of $\mathbb{F}_{q}^{\ast}$. Then, we may assume that $X=\langle x^{2}\rangle $ and $X\unlhd
		G_{0}\leq \langle x,y,\phi \rangle =\GaO_{2}^{+}(q)$, where $\phi:(t_{1},t_{2})\mapsto (t_{1}^{p},t_{2}^{p})$  and
		\[
		x=\left(
		\begin{array}{cc}
			\omega  & 0 \\
			0 & \omega ^{-1}%
		\end{array}%
		\right) \text{ and }
		y=\left(
		\begin{array}{cc}
			0 & 1 \\
			1 & 0%
		\end{array}%
		\right),
		\]%
		see \cite[Theorem 11.4]{{b:Taylor92}}.
		Then each non-trivial $G_{0}$-orbit is of length divisible by $(q-1)/2$, and since $k$ is the length of a $G_{0}$-orbit, it follows that that $k$ is a multiple of $(q-1)/2$. Further, the $G_{0}$-orbit containing $(1,1)$ is of length $(q-1)/2$ or $q-1$ according as $x$ does not lie or does lie in $G_{0}$. Then $k/\lambda$ divides $q-1$ by Lemma \ref{lem:six}(d), and so $k=
		(q-1)\lambda/t$ for some $t\geq 1$. Since $(q-1)/2$ divides $k$, it implies that $(q-1)/2$ divides  $(q-1)\lambda/t$, and the fact that $\lambda$ is prime forces $t=2$, $\lambda$ or $2\lambda$. In this case, $v=q^2$. Then by Lemma \ref{lem:six}(c), we have that $t<\lambda$, an so $t=2$. By Lemma \ref{lem:six}(a), we have that $\lambda=(4q+6)/(q-1)$, and since $\lambda$ is prime, we obtain $q=11$ for which $(v,k,\lambda)=(121,25,5)$, which is ruled out in \cite{a:Braic-255}.
	\end{proof}

\begin{proposition}\label{pro:Aschbacher}
	Suppose that $\Dmc$ is a nontrivial symmetric $(q^{n},k,\lambda)$ design and that $G$ is a flag-transitive and point-primitive automorphism group of $\Dmc$ of affine type. Then one of the following holds:
	\begin{enumerate}[\rm \quad (a)]
		\item $G_{0}$ lies in a member of one of the geometric families $\Cmc_{i}$, $i=2,4,5,6,7$, of subgroups of $\N_{\GaL_n(q)}(T)$; or
		\item $G_{0}^{(\infty)}$, the last term in the derived series of $G_{0}$, is quasisimple, and its action on $V$ is absolutely irreducible and not realizable over any proper subfield of $\Fbb_q$.
	\end{enumerate}
\end{proposition}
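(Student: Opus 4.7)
The plan is to apply Aschbacher's theorem \cite{a:Aschbacher-84} to the irreducible subgroup $G_0$ of $\GaL_n(q)$ and then to eliminate the three families $\Cmc_1$, $\Cmc_3$ and $\Cmc_8$ using the preceding results of this section. By Corollary~\ref{cor:irr}, $G_0$ acts irreducibly on $V$; hence Aschbacher's theorem, in the form used in \cite{a:Liebeck-98-Affine} (see also \cite[Chapter~4]{b:KL-90}), guarantees that either $G_0$ lies in a member of one of the eight geometric classes $\Cmc_i$ of subgroups of $\N_{\GaL_n(q)}(T)$ for some $i\in\{1,\ldots,8\}$, or $G_0^{(\infty)}$ is quasisimple, acts absolutely irreducibly on $V$, and its representation is not realizable over any proper subfield of $\Fbb_q$. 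It therefore suffices to exclude $\Cmc_1$, $\Cmc_3$ and $\Cmc_8$.

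The family $\Cmc_1$ consists of stabilizers of proper non-zero subspaces of $V$, and is immediately ruled out by the irreducibility of $G_0$ established in Corollary~\ref{cor:irr}. Similarly, if $G_0\in\Cmc_3$, then $G_0$ preserves a prime-index field extension of $\Fbb_q$ acting on $V$, so $G_0\leq\GaL_{n/r}(q^r)$ for some prime divisor $r$ of $n$; but this contradicts the minimality of $n$ in the definition of the $\Fbb_q$-structure on $V$ fixed at the start of Section~\ref{sec:proof}.

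The main obstacle, and where the real work has already been done, is the elimination of $\Cmc_8$. In the Aschbacher--Liebeck setup, a member of $\Cmc_8$ satisfies $X\unlhd G_0\leq\N_{\GaL_n(q)}(X)$, where $X$ is one of the classical groups $\Sp_n(q)$, $\SU_n(q^{1/2})$, or $\Omega^{\e}_n(q)$ with $\e\in\{\pm,\circ\}$, each preserving a non-degenerate form on $V$. This is precisely the hypothesis of Lemma~\ref{lem:C8}, whose conclusion asserts that $G_0$ cannot contain such an $X$. Hence $\Cmc_8$ is ruled out, and the only remaining alternatives are those listed in parts (a) and (b) of the proposition.
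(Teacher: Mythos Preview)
Your argument is correct and follows the same route as the paper: apply Aschbacher's theorem, eliminate $\Cmc_1$ via irreducibility (Corollary~\ref{cor:irr}), eliminate $\Cmc_3$ via the minimality of $n$ chosen at the start of Section~\ref{sec:proof}, and invoke Lemma~\ref{lem:C8} for the classical-group case. One small imprecision worth tightening: lying in a $\Cmc_8$-member only means $G_0\leq \N_{\GaL_n(q)}(X)$, not that $X\unlhd G_0$, and separately you need Lemma~\ref{lem:C8} to rule out $\SL_n(q)\leq G_0$ \emph{before} Aschbacher's theorem applies at all; the paper handles both points by citing Lemma~\ref{lem:C8} up front (it covers $\SL_n(q)$ as well as the form-preserving groups) and then stating the Aschbacher dichotomy directly as $\Cmc_1,\ldots,\Cmc_7$ versus quasisimple.
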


\begin{proof}
	Note by Lemma~\ref{lem:C8} that $G_{0}$ does contain one of the classical groups $\SL_n(q)$, $\Sp_n(q)$,  $\SU_n(q^{1/2})$ and $\Omega^{\e}_n(q)$ with $\e \in \{\pm, \circ\}$. Since $G$ is flag-transitive and point-primitive,
	by Aschbacher's Theorem~\cite{a:Aschbacher-84}, there are families $\Cmc_i$, $1\leq i \leq 7$, of subgroups of $\N_{\GaL_n(q)}(X)$, such that either $G_{0}$ is contained in a member of $\Cmc_i$, $1\leq i \leq 7$, or $G_{0}^{(\infty)}$ is quasisimple and irreducible. If $G_{0}$ is a $\Cmc_1$-subgroup, then $G_{0}$ is reducible, which is not the case by Corollary~\ref{cor:irr}. Moreover, by the definition of $q$, $G_{0}$ cannot be a member of $\Cmc_3$. Thus the assertion follows.
\end{proof}

In the following sections, we analyze each possible case in Proposition~\ref{pro:Aschbacher}.
In some cases, we use the method presented in \cite{a:Liebeck-98-Affine}, and we also use the method described in \cite[Section 6.1]{a:ABD-Exp} which we briefly mention here.
We first note by Lemma~\ref{lem:six}(c) that $k$ divides $|H|$, where $H:=G_{0}$ is the  point-stabilizer of $0$ in $G$. We know that $v=p^d=q^n$. Since $\lambda \neq p$ is an odd prime and $k$ divides $\lambda(v-1)=\lambda(p^d-1)$, it follows that $k$ divides $\lambda(v-1, |H|_{p'})$. Since also $|H|$ is mainly a polynomial in terms of $q$, we conclude that $k$ divides $\lambda f(q)$, where $f(q)$ is a polynomial which is multiple of $\gcd(v-1, |H|_{p'})$. In the case where there are some suitable subdegrees of $G$, we also apply Lemma~\ref{lem:six}(e) and obtain $f(q)$ by the greatest common divisor of $v-1$ and these subdegrees. Therefore,
\begin{align}\label{eq:k-f}
	u\cdot k=\lambda f(q),
\end{align}
for some positive integer $u$. Since $\lambda v<k^{2}$, it follows that
\begin{align}\label{eq:u}
	u^2\cdot v <\lambda \cdot f(q)^2.
\end{align}
Again, by Lemma~\ref{lem:six}(a) and the fact that $u\cdot k=\lambda f(q)$, we find the parameters $k$ and $\lambda$ in terms of $u$ and $q$ as below:
\begin{align}\label{eq:k-lam}
	k = \frac{u\cdot (v-1)}{f(q)}+1 \quad \text{ and } \quad  \lambda=\frac{u^{2}\cdot (v-1)+u\cdot f(q)}{f(q)^{2}}.
\end{align}

\subsection{Geometric subgroups}

In this section,  we prove in Lemma~\ref{lem:affine-geom} below that the point-stabilizer $H=G_{0}$ cannot lie in one of the families of geometric $\Cmc_{i}$-subgroups of $G$ recorded as in Proposition~\ref{pro:Aschbacher}(a).

\begin{lemma}\label{lem:affine-geom}
	Let $\Dmc$ be a nontrivial symmetric $(p^{d},k,\lambda)$ design with $p\neq\lambda \geq 5$ a prime divisor of $k$. If $G$ is a flag-transitive and point-primitive automorphism group of $\Dmc$ of affine type, then the point-stabilizer $H$ cannot be a geometric subgroup of $\GL_{d}(p)$.
\end{lemma}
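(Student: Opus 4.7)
The plan is to rule out each of the five geometric classes $\Cmc_{2}, \Cmc_{4}, \Cmc_{5}, \Cmc_{6}, \Cmc_{7}$ permitted by Proposition \ref{pro:Aschbacher}(a) in turn, exploiting the unified machinery \eqref{eq:k-f}--\eqref{eq:k-lam} set up above. In each class I would first read off the generic structure of $H$ from \cite[Chapter 4]{b:KL-90} to obtain an upper bound on $|H|_{p'}$, and then extract one or two naturally short $H$-orbits on $V \setminus \{0\}$: coordinate-axis vectors for $\Cmc_{2}$, pure tensors for $\Cmc_{4}$ and $\Cmc_{7}$, eigenvectors of a central element of the extraspecial $r$-group for $\Cmc_{6}$, and $\Fbb_{q_{0}}$-rational points for $\Cmc_{5}$. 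Setting $f(q):=\gcd\bigl(v-1,\,|H|_{p'},\,d_{1},\,d_{2}\bigr)$ with the $d_{i}$ being these short subdegrees, Lemma \ref{lem:six}(c)--(d) yields $u\cdot k = \lambda f(q)$ and $u^{2}v < \lambda f(q)^{2}$, which typically leaves only finitely many candidate triples $(n,q,u)$.

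For $\Cmc_{5}$ (with $q=q_{0}^{r}$, $r$ prime), $|H|_{p'}$ is at most $r\cdot|\GaL_{n}(q_{0})|_{p'}$, and the inequality \eqref{eq:u} is tight enough that only small $(n,q_{0},r)$ survive; these are dispatched by checking \eqref{eq:k-lam} against the constraints in Proposition \ref{prop:lam-p}(b1)--(b2), namely that $\lambda$ and $1-4\lambda$ are squares $\pmod{p}$ while $k-\lambda$ is not a square in $\Zbb$. Class $\Cmc_{6}$ is similarly rigid: here $n=r^{m}$, $H$ normalises a symplectic-type $r$-group $R = r^{1+2m}$, and the character theory of $R$ tightly constrains the orbits of $H$ on $V \setminus \{0\}$. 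The surviving numerical candidates are eliminated using Lemma \ref{lem:six}(b) (squareness of $4\lambda(v-1)+1$) together with Lemma \ref{lem:diff} whenever $-1 \in H$.

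The decomposition classes $\Cmc_{2}$, $\Cmc_{4}$, $\Cmc_{7}$ require splitting on the factor dimension $a$. In $\Cmc_{2}$ with $a \geq 2$, and in $\Cmc_{4}, \Cmc_{7}$, the pure-tensor or block subdegree is small compared with $v$, so that the inequality $\lambda v < k^{2}$ combined with \eqref{eq:k-f} forces a short list of candidate $(n,a,q)$ which can be ruled out directly. In the imprimitive case $\Cmc_{2}$ with $a=1$, the subdegree $n(q-1)$ from the orbit of nonzero coordinate-axis vectors, together with the subdegree $\binom{n}{2}(q-1)^{2}$ from the orbit of weight-two vectors, trims $f(q)$ essentially down to $n(q-1)$; then \eqref{eq:u} forces $n$ small, and the residual $(n,q)$ are excluded by combining Proposition \ref{prop:lam-p}(b2) with the primality of $\lambda$.

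The main obstacle is the imprimitive class $\Cmc_{2}$ with $a=1$, because $|H|$ can be as large as $(q-1)^{n}n!\log_{p}q$, which is the largest among all geometric classes; here several sub-sub-cases (notably $n \in \{2,3\}$ and small $q$) must be dispatched by hand or by a \textsf{GAP} check, using the explicit constraints on $\lambda \pmod{p}$, the non-squareness of $k-\lambda$, and the squareness of $4\lambda(v-1)+1$. A secondary difficulty arises in $\Cmc_{7}$ with exactly two tensor factors of small dimension, where the pure-tensor subdegree is a non-negligible fraction of $v$ and one has to exploit the precise form of \eqref{eq:k-lam} to exclude the last few candidates.
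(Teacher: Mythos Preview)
Your high-level plan matches the paper's for $\Cmc_{2}$, $\Cmc_{4}$ and $\Cmc_{7}$: extract a short $H$-invariant set of nonzero vectors (coordinate-axis vectors, pure tensors), bound $k/\lambda$ via Lemma~\ref{lem:six}(d), and grind through the surviving parameters with \eqref{eq:k-lam}. Two of your five classes, however, do not go through as you describe.

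For $\Cmc_{6}$ your proposed short orbit---``eigenvectors of a central element of the extraspecial $r$-group''---does not exist: $\Z(R)$ acts by scalars on the absolutely irreducible module $V$, so every nonzero vector is such an eigenvector and you get no proper $H$-invariant subset this way. The paper instead lifts Liebeck's bound from \cite[Lemma~3.7]{a:Liebeck-98-Affine}, obtaining that $k$ divides $\lambda(q-1)\gcd\bigl((q^{s^m}-1)/(q-1),\, s^{m^2+2m}\log_p(q)\prod_{j=1}^{m}(s^{2j}-1)\bigr)$; combined with $\lambda<(q-1)s^{m}$ this forces $s^{m}<2m^{2}+5m+5$, leaving only six pairs $(s,m)$ to be checked individually. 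If you want an orbit-based argument here you would need eigenspaces of \emph{non-central} elements of $R$, and you would then have to show that the resulting $H$-invariant set is small enough.

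For $\Cmc_{5}$ your direct analysis via $\Fbb_{q_{0}}$-rational points is avoidable: the paper simply observes that $\N_{\GL_{d}(p)}(\GL_{n}(q_{0}))\leq \GL_{t}(q_{0})\circ\GL_{n}(q_{0})\leq\GL_{d}(p)$, so $H$ already lies in a member of $\Cmc_{4}$ or $\Cmc_{7}$ over $\Fbb_{p}$, and those classes have just been eliminated. Finally, your weight-two subdegree $\binom{n}{2}(q-1)^{2}$ in the $\Cmc_{2}$, $a=1$ case is a multiple of the weight-one subdegree $n(q-1)$ (since $q$ is odd), so it does not tighten $f(q)$ at all; the paper lives with the single bound $k\mid\lambda t(p^{\ell}-1)$ throughout $\Cmc_{2}$ and pays for it with several pages of sub-case analysis on $t$, on whether $\lambda\mid t!$ or $\lambda\mid p^{j}-1$, and on the parities of $j$ and $\ell$---work your sketch substantially underestimates.
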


\begin{proof}
	By Proposition~\ref{pro:Aschbacher}, we only need to consider the cases where $H$ lies in one of the families $\Cmc_{i}$ with $i=2,4,5,6,7$. We now analyze each of these possible cases separately:\smallskip
	
	\noindent \textbf{(1)}
	Suppose first that $H$ is contained in a member of $\Cmc_{2}$. Then $H$ preserves a partition $V=V_{1} \bigoplus \ldots \bigoplus V_{t}$, where each $V_{j}$ of the same dimension $i$, and $n=it$ with $t\geq2$. By considering $V$ as a vector space over the field $\mathbb{F}_{p}$ of dimension $d$, we observe that $H$ is a subgroup of $N:=\GL_{d/t}(p) \wr \S_{t}$. By Proposition~\ref{prop:lam-p}, we can assume that $\lambda\neq p$. Then by the fact that $\lambda$ is an odd prime divisor of $k$ implies that $\lambda$ divides $t!$ or $p^{j}-1$ for some $1\leq j\leq d/t$. Set $\ell:=d/t$. Then $\lambda \leq \max \{t,p,p^{j}-1\}$, for some $1\leq j\leq \ell$, and so
	\begin{equation}\label{eq:C2-lam}
		\lambda <t\cdot(p^{\ell}-1).
	\end{equation}
	Note also that each orbit of $N$ on $V \backslash \{0\}$ is a union of orbits of $H$. Then $\bigcup_{i=1}^{t}V_{i} \backslash \{0\}$ is an $N$-orbit of length $t\cdot (p^{\ell}-1)$, and so Lemma~\ref{lem:six}(d) yields
	\begin{equation}\label{eq:C2-k}
		k \quad \text{divides} \quad \lambda t\cdot(p^{\ell}-1).
	\end{equation}
	Since $v=p^{\ell t}$, it follows from \eqref{eq:C2-k} and Lemma~\ref{lem:six}(c) that $p^{\ell t}<\lambda t^{2}\cdot (p^{\ell}-1)^{2}$. Thus
	$p^{\ell t}<\lambda t^{2}p^{2\ell}$. Then by \eqref{eq:C2-lam}, we have $p^{\ell \cdot(t-3)}<t^{3}$. Thus $\ell\cdot(t-3)\cdot \log_2p <3\log_2t$. The last inequality holds only when
	\begin{enumerate}[\rm (i)]
		\item $t=2$,
		\item $t=3$,
		\item $t\in\{4,5,6,7,8\}$ and $\ell=1$, or
		\item $(\ell,t,p)\in \{(2,4,3),(2,4,5),(2,4,7),(2,5,3),(3,4,3)\}$.
	\end{enumerate}
	We now discuss these four possible cases. Note that $\lambda=p$, or $\lambda$ divides $t!$ or $p^{j}-1$ for some $1\leq j\leq \ell$.
	
	Suppose first that $\lambda$ divides $t!$. Then since by the assumption $\lambda\geq 5$ is an odd prime, by considering the cases (i)-(iv), we need to focus on the possibilities where $5\leq t\leq 8$ and $\ell=1$, or $(\ell,t,q)=(2,5,3)$. In the latter case, we have that $\lambda=5$, and since $v=p^{\ell t}=3^{10}$, we observe that $4\lambda(v-1)+1=1180961$ which is not a square and this violates Lemma~\ref{lem:six}(b). In the former case, we have $\lambda=5,7$. Then by Lemma~\ref{lem:six}(c), we conclude that $p^{2t}<\lambda t^2 (p^{2}-1)^{2}$. Hence $p=3$ and $(t,\lambda)\in \{(5,5),(5,7),(6,7)\}$. We now apply Lemma~\ref{lem:six}, and conclude that $k$ divides $\lambda \gcd(t(q-1),q^t-1)$. Then $k$ is a divisor of $20$ or $28$. But for each of these values of $k$, the condition $k(k-1)=\lambda(v-1)$ does not hold, which is a contradiction.
	
	Therefore, $\lambda$ divides $p^{j}-1$ for some $1\leq j\leq \ell$. In what follows, we analyze each of possible cases (i)-(iv) under this condition.\smallskip
	
	\noindent \textbf{(i)} Let $t=2$. Then by \eqref{eq:C2-k}, we know that $k$ divides $2\lambda (p^{\ell}-1)$. Set $f_{\ell}(p):=p^{\ell}-1$, and let $u$ be a positive integer such that $uk=2\lambda f_{\ell}(p)$. Since $v-1=p^{2\ell}-1$, by \eqref{eq:k-lam}, we have that
	\begin{align}\label{eq:C2-i2-k-lam}
		2k=2+u\cdot (p^{\ell}+1) \quad \text{ and }  \quad 4\lambda=u^2+\frac{2u^2+2u}{f_{\ell}(p)}.
	\end{align}
	Then
	\begin{align}\label{eq:C2-lam-t2}
		u< 2\sqrt{\lambda}.
	\end{align}
	Let $m$ be a positive integer such that $m\lambda=p^j-1$. Since $\lambda$ is an odd prime, we conclude that $m$ is even. Then by \eqref{eq:C2-i2-k-lam}, we conclude that
	\begin{align*}
		u^2+\frac{2u^2+2u}{f_{\ell}(p)}=\frac{4(p^j-1)}{m}.
	\end{align*}
	Therefore, $4(p^j-1)-mu^2>0$, and so
	\begin{align}\label{eq:C2-u}
		u< 2\sqrt{\frac{p^j-1}{m}}.
	\end{align}
	By \eqref{eq:C2-i2-k-lam} and the fact that $k$ divides $2\lambda f_{\ell}(p)$, we conclude that
	\begin{align}\label{eq:C2-i2-Divide}
		u\cdot (p^{\ell}+1)+2  \text{ divides } \frac{4 (p^{j}-1)(p^{\ell}-1)}{m}.
	\end{align}
	
	If $j=\ell$, then since $\gcd(u\cdot (p^{\ell}+1)+2, p^{\ell}-1)$ divides $2u+2$, we conclude by \eqref{eq:C2-i2-Divide} that $u\cdot (p^{\ell}+1)+2$ must divide $16m^{-1}\cdot(u+1)^2$, and so $m\cdot p^{\ell}<64u$. Then by \eqref{eq:C2-u}, we conclude that $m^3\cdot p^{2\ell}<2^{14}\cdot (p^{\ell}-1)$. Then since $m$ is even, we must have $p^{\ell}<2^{11}$. This inequality holds only for $(p,\ell)$ as in below:\smallskip
	
	\begin{table}[h]
		\centering
		\small
		\begin{tabular}{lccccc}
			\hline\noalign{\smallskip}
			$p$ & $3$ & $5$ & $7,11$ & $13, \ldots, 43$ & $47, \ldots, 2039$ \\
			\noalign{\smallskip}\hline\noalign{\smallskip}
			$\ell \leq $ & $6$ & $4$ & $3$ & $2$ & $1$\\
			\noalign{\smallskip}\hline\noalign{\smallskip}
		\end{tabular}
	\end{table}

	\noindent For these values of $(p,\ell)$, we can find the value of $u$ by \eqref{eq:C2-u}. For these values of $(p,\ell, u)$, by \eqref{eq:C2-i2-Divide}, we conclude that $(p,\ell,u)\in\{(3, 2, 3), (5, 1, 1), (7,1,2),(11,1,4),$ $(13,1,1),(13,1,5),(17,1,7),$ $(19,1,8),(29,1,1),(41,1,3)\}$. Since $t=2$, it follows that $v=p^{2\ell}\leq 41^2$, however, by   \cite{a:Braic-2500-power}, we obtain no possible symmetric design.
	
	If $j=\ell-1$ with $p\in\{3,5\}$ and $\ell\geq 2$, then since $\gcd(u\cdot (p^{\ell}+1)+2, p^{\ell}-1)$ divides $2u+2$ and $\gcd(u\cdot (p^{\ell}+1)+2, p^{\ell-1}-1)$ divides $u\cdot (p+1)+2$ by \eqref{eq:C2-i2-Divide}, we conclude that $u\cdot (p^{\ell}+1)+2\leq 4m^{-1}\cdot(2u+2)[u\cdot (p+1)+2]$. Therefore,  $mu{\cdot} p^{\ell}< 16(u+1)(3u+1)$. Hence  $m\cdot p^{\ell}<2^7u$. Combining this with \eqref{eq:C2-u} implies that $m^3\cdot p^{2\ell}<2^{15}\cdot p^{\ell-1}$. As $m$ is even, we conclude that $p^{\ell+1}<2^{12}$ with $p=3,5$, and so $\ell\leq 7$ if $p=3$, and $\ell \leq 4$ if $p=5$. For these values of $(p,\ell)$, we can find the  parameter $u$ by \eqref{eq:C2-u}, and then,  \eqref{eq:C2-i2-Divide} holds only when $(p,\ell,u)=(5, 3, 1)$, which gives no possible parameter by \eqref{eq:C2-i2-k-lam}.\smallskip
	
	\noindent \textbf{(ii)} Let $t=3$. Then by \eqref{eq:C2-k}, the parameter $k$ divides $3\lambda (p^{\ell}-1)$. Set $f_{\ell}(p):=p^{\ell}-1$, and let $u$ be a positive integer such that $uk=3\lambda f_{\ell}(p)$, where $f_{\ell}(p)=p^{\ell}-1$. Since $v-1=p^{3\ell}-1$, by \eqref{eq:k-lam}, we have that
	\begin{align}\label{eq:C2-i3-k-lam}
		k=1+\frac{u\cdot (p^{2\ell}+p^{\ell}+1)}{3} \quad \text{ and }  \quad 9\lambda=u^2\cdot(p^{\ell}+2)+\frac{3u^2+3u}{f_{\ell}(p)}.
	\end{align}
	There is a positive integer $m$ such that $m\lambda=p^j-1$. Since $\lambda$ is an odd prime, we conclude that $m$ is even. Then by \eqref{eq:C2-i3-k-lam}, we conclude that
	\begin{align*}
		u^2\cdot(p^{\ell}+2)+\frac{3u^2+3u}{f_{\ell}(p)}=\frac{9(p^j-1)}{m}.
	\end{align*}
	Therefore, $9(p^j-1)-mu^2\cdot(p^{\ell}+2)>0$, and so $u^2<9(p^{\ell}-1)/[m\cdot(p^{\ell}+2)]$. This together with the fact that $m$ is even implies that $u\leq 2$. Combining this with \eqref{eq:C2-i3-k-lam} and the fact that $\lambda$ is an integer, we conclude that $p^{\ell}-1$ must divide $6$ or $18$ when $u=1$ or $2$, respectively. Then $(p,\ell,u)\in\{(3,1,1), (3,1,2), (7,1,1), (7,1,2), (19,1,1)\}$, but each of these triples dose not give rise to a parameter $\lambda$ by \eqref{eq:C2-i3-k-lam}, which is a contradiction.
	\smallskip
	
	\noindent \textbf{(iii)} Let $t\in\{4,5,6,7,8\}$ and $\ell=1$. Then  \eqref{eq:C2-k} implies that $k$ divides $\lambda t\cdot (p-1)$, where $t\in\{4,5,6,7,8\}$. Set $f(p):=p-1$. Then  $uk=\lambda t\cdot f(p)$, for some positive integer $u$. Since $v-1=p^{t}-1$, it follows from \eqref{eq:k-lam} that
	\begin{align}\label{eq:C2-i48-k-lam}
		k=1+\frac{u\cdot \sum_{j=0}^{t-1}p^j}{t} \quad \text{ and }  \quad t^2\lambda=u^2\cdot\sum_{j=0}^{t-2}p^j+\frac{tu^2+tu}{p-1}.
	\end{align}
	Here $\lambda$ is an odd prime divisor of $p-1$. Then $m\lambda=p-1$, for some even number $m$, and so by \eqref{eq:C2-i48-k-lam}, we conclude that
	\begin{align*}
		u^2\cdot\sum_{n=0}^{t-2}p^t+\frac{tu^2+tu}{p-1}=\frac{t^2\cdot(p-1)}{m}.
	\end{align*}
	Therefore, $t^2(p-1)-mu^2\cdot\sum_{n=0}^{t-2}p^t>0$. Since $t\in\{4,5,6,7,8\}$ and $m$ is even implies, it follows that $u=1$. By \eqref{eq:C2-i48-k-lam}, we conclude that $p-1$ divides $2t$, and so $p\leq 17$. For each such $p$, it is easy to check that the parameter $\lambda$ obtained by \eqref{eq:C2-i48-k-lam} is not an integer, which is a contradiction.
	\smallskip
	
	\noindent \textbf{(iv)} In this case,we have that $(\ell,t,p,\lambda)\in\{(2,5,3,5), (3,4,3,13)\}$. For such $v=p^{t\ell}$ and $\lambda$, the equation $k(k-1)=\lambda(v-1)$ has no integer solutions for $k$.
	
	\smallskip
	
	\noindent \textbf{(2)} Suppose now that $H$ lies in a member of $\Cmc_{4}$. Then $H$ is a subgroup of the stabilizer of a tensor product of two nonsingular subspaces of dimensions $1<i<\ell$ with $i\ell=d$ and $i^2<d$. So $V_d(p)=V_{i} \bigotimes V_{\ell}$ and $H \leq N_{\GL_{d}(p)}(\GL_{i}(p)\circ \GL_{\ell}(p))$ in its natural action on $V$, where $V_{i}$ and $V_{\ell}$ are spaces over $\mathbb{F}_{p}$ of dimension $t$ and $m$, respectively.  Here the nonzero vectors $v_1\otimes v_2$ form a union of $H$-orbits which has size $(p^i-1)(p^{\ell}-1)/(p-1)$.  Hence by Lemma~\ref{lem:six}(d), we conclude that
	\begin{equation}\label{eq:C4-k}
		k \mid \frac{\lambda\cdot (p^{i}-1)(p^{\ell}-1)}{p-1}.
	\end{equation}
	Note here that $v=p^{i\ell}$. So Lemma \ref{lem:six}(c) implies that
	\begin{equation}\label{eq:C4-1}
		p^{i\ell}<\lambda\cdot (p^{i}-1)^2(p^{\ell}-1)^2.
	\end{equation}
	Note that $k$ divides $|H|$ and $\lambda$ is an odd prime divisor of $k$. By Proposition~\ref{prop:lam-p}, we exclude the case where $\lambda=p$. Thus $\lambda$ divides $p^{j}-1$, for some $1\leq j \leq \ell$. Therefore, $\lambda\leq p^{\ell}$, and hence it follows from \eqref{eq:C4-1} that $i\ell<3\ell+2i$, that is to say, $\ell\cdot(i-3)<2i$, and since  $i<\ell$, one of the following holds:
	\begin{enumerate}[\rm (i)]
		\item $i=2$,
		\item $i=3$,
		\item $i=4$ and $\ell\in \{5,6,7\}$.
	\end{enumerate}
	In what follows, we analyze each of these three possible cases. Moreover, since by Lemma~\ref{lem:six}, $k$ divides $\lambda(v-1)$,  it follows from \eqref{eq:C4-k} that
	\begin{equation}\label{eq:C4-k-h}
		k \quad \text{divides} \quad \lambda h\cdot f_{\ell}(p),
	\end{equation}
	where $f_{\ell}(p)=p^{\ell}-1$ and $h=\gcd((p^{i}-1)/(p-1),p^{\ell}+1)$ in case (i) or $h=(p^{i}-1)/(p-1)$ in cases (ii) and (iii).
	
	\noindent \textbf{(i)} Let $i=2$. Then $h=\gcd(p+1,p^{\ell}+1)$, and so $h=2$ or $h=p+1$ when $\ell$ is even or odd, respectively.
	Moreover, by \eqref{eq:C4-k-h}, we have that $uk=\lambda h\cdot f_{\ell}(p)$, for some positive integer $u$. Since $v-1=p^{2\ell}-1$, it follows from \eqref{eq:k-lam}  that
	\begin{align}\label{eq:C4-i2-k-lam}
		k=1+\frac{u\cdot (p^{\ell}+1)}{h} \quad \text{ and }  \quad h^2\lambda=u^2+\frac{2u^2+uh}{f_{\ell}(p)}.
	\end{align}
	Therefore, $h^2\cdot \lambda-u^2>0$, and so
	\begin{align}\label{eq:C4-i2-u1}
		u< h\cdot \sqrt{\lambda}.
	\end{align}
	
	We first show that $u>h$. Assume to contrary that $u\leq h$. Since $\lambda$ is an odd prime, by \eqref{eq:C4-i2-k-lam}, we conclude that $f_{\ell}(p)$ divides $2u^2+uh$. Therefore, $p^{\ell}-1\leq 3h^2$. Since $\ell>2$ and $h\in\{2,p+1\}$, the only pairs satisfying this inequality is $(p,\ell)=(3,3)$, and so $v=3^{6}$, but we find no possible parameter set by \cite{a:Braic-2500-power}. Therefore, by \eqref{eq:C4-i2-u1}, we have that
	\begin{align}\label{eq:C4-i2-u}
		h<u< h\cdot \sqrt{\lambda}.
	\end{align}
	
	Suppose first that $j$ is even, where $\lambda$ divides $p^{j}-1$ for some  $1\leq j\leq \ell$. Then there exists an even number $m$ such that $m\lambda=p^{j/2}-\e1$ with $\e=+$ or $-$. As $\lambda$ is an integer, \eqref{eq:C4-i2-k-lam} implies that $f_{\ell}(p)$ divides $2u^2+uh$. Then by \eqref{eq:C4-i2-u}, we have that $f_{\ell}(p)\leq 2u^2+uh<3u^2<3h^{2}\lambda$, and so
	\begin{align}\label{eq:C4-i2-J-even}
		m\cdot (p^{\ell}-1)<3h^2\cdot(p^{j/2}-\e 1),
	\end{align}
	where $m$ and $j$ are even and $2\leq j\leq \ell$. Then $2(p^{\ell}-1)<3(p+1)^{2}(p^{j/2}+1)$, and so $p^{(\ell-4)/2}\leq 12$, which is true when $3\leq \ell\leq 8$. Considering the fact that $h=2$ or $h=p+1$ when $\ell$ is even or odd, respectively, it follows from \eqref{eq:C4-i2-J-even} that $(\ell,j)=(3,2)$, and so $h=p+1$. In which case, by \eqref{eq:C4-i2-k-lam} and the fact that $k$ divides $\lambda h f_{\ell}(p)$, we conclude that $u\cdot (p^{3}+1)+h$ divides  $h^2\cdot (p-\e1)(p^{3}-1)/m$, where $\e\in\{+,-\}$. Since $\gcd(u\cdot (p^{3}+1)+2, p^{3}-1)$ divides $2u+h$, it follows that $u\cdot (p^{3}+1)+h$ divides $h^2\cdot (2u+h)(p-\e1)/m$. Let $u_1$ be a positive integer such that
	\begin{align}\label{eq:C4-i2-J-even-2}
		mu_1\cdot[u\cdot (p^{3}+1)+h]=h^2\cdot (2u+h)(p-\e1)
	\end{align}
	By \eqref{eq:C4-i2-u}, we have that $2u+h<3u$, and so  $mu_1\cdot (p^{3}+1)<3h^3=3(p+1)^{3}$. Then Straightforward computations shows that $(m,u_1)=(2,1)$ or $(m,u_{1},p)\in\{(2,2,3),(2,2,5),(2,2,7),(2,3,3)\}$. In the latter case, as $\lambda=(p-\e 1)/m$, we have that $\lambda\in \{1,2,3,4\}$, which is impossible as we assume that $\lambda\geq 5$. In the former case, by \eqref{eq:C4-i2-J-even-2}, we conclude that $2u\cdot[(p^{3}+1)-h^2\cdot(p-\e1)]=h^3\cdot (p-\e1)-2h>0$, and so $p^{3}+1>h^2\cdot(p-\e1)$, and hence $p^{3}+1> (p+1)^{2}(p-1)$,  which is impossible.\smallskip
	
	Suppose now that $j$ is odd. Recall that $\lambda$ is an odd prime divisor of $p^j-1$ where $1\leq j\leq \ell$. Let $m$ be a positive integer such that $m\lambda=p^j-1$. Since $\lambda$ is an odd prime, we conclude that $m$ is even. Then by \eqref{eq:C4-i2-k-lam} and this fact that $\lambda$ is an integer, we conclude that $f_{\ell}(p)$ divides $2u^2+uh$. By \eqref{eq:C4-i2-u}, we have that $2u^2+uh<3u^2$, and again by \eqref{eq:C4-i2-u}, we conclude that $m\cdot (p^{\ell}-1)<3h^2\cdot(p^j-1)$. It is easy to see that the last inequality holds only when
	\begin{align}\label{eq:C4-i2-J-odd}
		j\in\{\ell, \ell-1, \ell-2\}.
	\end{align}
	We now consider two following cases:\smallskip
	
	\noindent\textbf{(i.1)}  Let $\ell$ be even. As $j$ is odd, we have that $j=\ell-1$.
	By \eqref{eq:C4-i2-k-lam} and the fact that $k$ divides $2\lambda f_{\ell}(p)$, we conclude that $u\cdot (p^{\ell}+1)+2$ divides $4(p^{j}-1)(p^{\ell}-1)/m=4(p^{\ell-1}-1)(p^{\ell}-1)/m$.
	Since $\gcd(u\cdot (p^{\ell}+1)+2, (p^{\ell-1}-1)(p^{\ell}-1))$ divides $(2u+2)[u\cdot(p+1)+2]$, it follows that $u\cdot (p^{\ell}+1)+2$ divides $4 (2u+2)[u\cdot(p+1)+2]/m$, then $m[u(p^{\ell}+1)+2]\leq 4(2u+2)[u\cdot(p+1)+2]<36u^2p$, and so $p^{\ell-1}<18u$. So by \eqref{eq:C4-i2-u} and the fact that $m\geq 2$, we have that $p^{\ell-1}<36\sqrt{(p^{\ell}-1)/m}\leq 36\sqrt{(p^{\ell}-1)/2}$, then $p^{\ell-1}<18\cdot36$. Thus $(\ell,p)\in\{(4,3),(4,5),(4,7),(6,3)\}$, and hence
	$ (\ell,p,\lambda)\in\{(4, 3, 13),
	(4, 5, 31),
	(4, 7, 19),
	(6, 3, 11)\}$, but then we cannot find a possible value for $k$ satisfying Lemma \ref{lem:six}(a), which is a contradiction. \smallskip
	
	\noindent\textbf{(i.2)}  Let $\ell$ be odd. Then $h=p+1$. Moreover, by \eqref{eq:C4-i2-J-odd}, we conclude that $j\in\{\ell, \ell-2\}$. We will analyze each of these cases separately. \smallskip
	
	\noindent\textbf{(i.2.1)} Let $j=\ell$. Note that $\lambda=(p^{\ell}-1)/m$ is an odd prime divisor of $k$. Then, by \eqref{eq:C4-i2-k-lam}, we have that $p^{\ell}-1$ divides $mu\cdot(p^{\ell}+1)+mh$. Thus $p^{\ell}-1$ must divide $2mu+mh$, where $h=p+1$. Let $u_1$ be a positive integer such that $2mu+mh=u_1\cdot (p^{\ell}-1)$. Then
	\begin{align}\label{eq:C4-u1}
		2u=u_1\cdot\frac{p^{\ell}-1}{m}-h,
	\end{align}
	where $h=p+1$. Note by \eqref{eq:C4-i2-k-lam} and \eqref{eq:C4-u1} that $2hk=2u(p^{\ell}+1)+2h=\lambda[u_{1}(p^{\ell}+1)-mh]$. Since
	$k$ divides $\lambda h\cdot f_{\ell}(p)$, it follows that
	$u_1\cdot(p^{\ell}+1)-mh$ divides $2h^2\cdot f_{\ell}(p)=2h^2(p^{\ell}-1)$.  Since also $\gcd(u_1\cdot (p^{\ell}+1)-mh, p^{\ell}-1) $ divides $|2u_1-mh|$, we conclude that $u_1\cdot (p^{\ell}+1)-mh$ divides $2h^2\cdot |2u_1-mh|$. If $2u_1-mh\geq 0$, then \eqref{eq:C4-u1} implies that $4u=[2u_1(p^\ell-1)-2mh]/m\geq [mh(p^\ell-1)-2mh]/m= h\cdot(p^{\ell}-3)$. Recall by \eqref{eq:C4-i2-u} that $u<h\sqrt{(p^{\ell}-1)/m}$. Thus $p^{\ell}-3<4\sqrt{(p^{\ell}-1)/m}$, and so $m\cdot(p^{\ell}-3)^2<16(p^{\ell}-1)$, which is impossible for even $m$. If $2u_1-mh<0$, then $u_1\cdot (p^{\ell}+1)-mh$ divides $2h^2\cdot (mh-2u_{1})$, and so
	\begin{align}\label{eq:C4-i-2-fin}
		u_1\cdot [p^{\ell}+4h^2+1]\leq 2mh^3+mh.
	\end{align}
	On the other hand by \eqref{eq:C4-i2-k-lam} and the fact that $k$ divides $\lambda h f_{\ell}(p)=h^2\cdot(p^{\ell}-1)^2/m$, we conclude that $u\cdot(p^{\ell}+1)+h$ divides $h^2\cdot(p^{\ell}-1)^2/m$. Since $\gcd(u\cdot(p^{\ell}+1)+h, p^{\ell}-1)$ divides $2u+h$, it follows that  $u\cdot(p^{\ell}+1)+h\leq h^2\cdot(2u+h)^2/m$. Note by \eqref{eq:C4-i2-u} that $2u+h<3u$. Therefore, $m\cdot p^{\ell}< 9h^2\cdot u$. Hence by \eqref{eq:C4-i2-u}, we conclude that $m^3\cdot p^{\ell}<81h^6$, and so
	\begin{align}\label{eq:C4-i2-m}
		m<6h.
	\end{align}
	This together with \eqref{eq:C4-i-2-fin} implies that $u_1\cdot (p^{\ell}+4h^2+1)<13h^4$, and since $h=p+1$, we have that $p^{\ell}+4(p+1)^2+1<13(p+1)^4$, which is true when $\ell=3$ or $(p,\ell)\in\{(3,5), (3,7), (5,5), (7,5), (11,5),(13,5)\}$. For even $m$ satisfying \eqref{eq:C4-i2-m}, we obtain
	\[(p,\ell,\lambda)\in\{(3,5,11),(3,7,1093),(7,5,2801), (11,5,3221),(13,5,30941)\}.\]
	As $v=p^{2\ell}$, for each such $(p,\ell,\lambda)$, we obtain no possible parameter $k$ satisfying Lemma \ref{lem:six}(a).
	
	Suppose now that $\ell=3$. Recall that $\lambda=(p^3-1)/m$ is prime. Let $c:=\gcd(3,p-1)$. Then $(p-1)c$ and $(p^2+p+1)/c$ are coprime, and since $\lambda$ is prime and
	\begin{align*}
		\lambda=\frac{(p-1)c}{\gcd(m,(p-1)c)}\cdot\frac{(p^2+p+1)}{c\cdot \gcd(m,(p^2+p+1)/c)}
	\end{align*}
	we conclude that $(p-1)c=\gcd(m,(p-1)c)$ or $(p^2+p+1)/c=\gcd(m,(p^2+p+1)/c)$, and hence $(p-1)c\mid m$ or $(p^2+p+1)\mid cm$.
	
	Let first that $p^2+p+1$ divides $cm$. Then by \eqref{eq:C4-i2-m}, we have that $m<6h=6(p+1)$, and so $p^2+p+1< 18(p+1)$. This inequality holds only for $p=3,5,7,11,13,17$, and so, $\lambda=13,31,19,19,61,307$, respectively. But this leads to no possible parameter sets satisfying Lemma \ref{lem:six}(a).
	Let now $(p-1)c$ divides $m$. Then there exists positive integer $u_2$ such that $m=u_2\cdot(p-1)$, and so $\lambda=(p^2+p+1)/u_2$, and hence $u_{2}$ is odd. Since $m<6h=6(p+1)$, we conclude that $u_{2}(p-1)<6(p+1)$, and since $u_{2}$ is odd, we have that $u_2\in\{1,3,5,7,9,11\}$.
	
	Note that $\lambda=(p^2+p+1)/u_{2}$ is a prime divisor of $k$. By \eqref{eq:C4-i2-k-lam}, we conclude that $p^{2}+p+1$ divides $u_{2}u(p^{2}-p+1)+u_{2}=u_{2}u(p^{2}+p+1)-u_{2}(2up-1)$, and so $p^{2}+p+1$ divides $u_{2}(2up-1)$. Thus $2u_2 up=n_{1}(p^{2}+p+1)+u_{2}$ for some positive integer $n_{1}$, and this implies that $p$ divides $u_{2}+n_{1}$. Thus $n_{1}=n_{2}p-u_{2}$ for some positive integer $n_{2}$. Then $2u_2 up=n_{1}(p^{2}+p+1)+u_{2}=(n_{2}p-u_{2})(p^{2}+p+1)+u_{2}$, and so
	\begin{align}\label{eq:n2u2}
		2u_2 u=n_{2}(p^{2}+p+1)-u_{2}(p+1).
	\end{align}
	Since $u<h\sqrt{\lambda}$, it follows that $n_{2}\lambda =n_{2}(p^{2}+p+1)/u_{2}<(p+1)(2\sqrt{\lambda}+1)$, and so $n_{2}\sqrt{\lambda}<3h$, that is to say, $n_{2}\sqrt{p^{2}+p+1}<3(p+1)\sqrt{u_{2}}$, where  $u_2\in\{1,3,5,7,9,11\}$. This requires $n_{2}<12$. Note by \eqref{eq:n2u2} that
	\begin{align*}
		u=\frac{n_{2}(p^2+p+1)}{2u_2}+\frac{p+1}{2},
	\end{align*}
	and so by \eqref{eq:C4-i2-k-lam}, we have that
	\begin{align*}
		\lambda=\frac{[n_{2}p^2+(n_{2}+u_{2})p+n_{2}+u_{2}][n_{2}p^4+p^3u_{2}+n_{2}p^2+n_{2}+3u_{2}]}{4(p+1)(p^3-1)u_{2}^2}.
	\end{align*}
	On the other hand, we have that $\lambda=(p^{2}+p+1)/u_{2}$. Therefore,
	\begin{align}\label{eq:lam}
		\nonumber(n_{2}^2-4u_{2})p^6+
		(2n_{2}u_{2}-8u_{2}+n_{2}^2)p^5+
		(u_{2}^2+2n_{2}u_{2}-8u_{2}+2n_{2}^2)p^4+\\
		(n_{2}+u_{2})^2p^3+
		(4n_{2}u_{2}+8u_{2}+2n_{2}^2)p^2+
		(3u_{2}^2+4n_{2}u_{2}+8u_{2}+n_{2}^2)p+\\
		\nonumber(3u_{2}^2+4n_{2}u_{2}+4u_{2}+n_{2}^2)&=0,
	\end{align}
	where $1\leq n_{2}<12$ and $u_2\in\{1,3,5,7,9,11\}$. But for each such a pair $(u_{2},n_{2})$, the equation \eqref{eq:lam} has no integer solution for $p$, which is a contradiction.\smallskip
	
	\noindent\textbf{(i.2.2)} Let $j=\ell-2$. Then  $\lambda=(p^{\ell-2}-1)/m$ is an odd prime divisor of $k$. By \eqref{eq:C4-i2-k-lam} and the fact that $k$ divides $\lambda hf_{\ell}(p)$, we conclude that $u\cdot (p^{\ell}+1)+h$ divides $\lambda h^2f_{\ell}(p)=h^2\cdot (p^{\ell-2}-1)(p^{\ell}-1)/m$. Note that $\gcd(u\cdot (p^{\ell}+1)+h, p^{\ell}-1)$ divides $2u+h$. Therefore, $u\cdot (p^{\ell}+1)+h$ divides $h^2\cdot (2u+h)(p^{\ell-2}-1)/m$. Let $u_1$ be a positive integer such that
	\begin{align}\label{eq:C4-i2-l2}
		mu_1\cdot[u\cdot (p^{\ell}+1)+h]=h^2\cdot (2u+h)(p^{\ell-2}-1)
	\end{align}
	Note by \eqref{eq:C4-i2-u} that $2u+h<3u$. Then $mu_1\cdot (p^{\ell}+1)<3h^2(p^{\ell-2}-1)$, and so $mu_1\cdot (p^{\ell}+1)<3(p+1)^2(p^{\ell-2}-1)$. This inequality holds only when $mu_1<6$. Since $m$ is even, we conclude that
	$mu_{1}=2$ or $mu_1=4$. If $mu_{1}=2$, then by \eqref{eq:C4-i2-l2}, we have that $2u[p^{\ell}+1-(p+1)^2(p^{\ell-2}-1)]=(p+1)[(p+1)^2(p^{\ell-2}-1)-2]>0$, and so $p^{\ell}+1>(p+1)^2(p^{\ell-2}-1)$, which is impossible. Therefore, $mu_{1}=4$, and hence \eqref{eq:C4-i2-l2} implies that $2u[2(p^{\ell}+1)-(p+1)^2(p^{\ell-2}-1)]=(p+1)[(p+1)^2(p^{\ell-2}-1)-4]<8h[2(p^{\ell}+1)-(p+1)^2(p^{\ell-2}-1)]$, and so $u<4h$. Since $f_{\ell}(p)$ divides $2u^2+uh$ by \eqref{eq:C4-i2-k-lam}, it follows that $2u^2+uh<36h^2$, and hence $p^{\ell}-1<36(p+1)^2$. Then  $(p,\ell)=(3,5)$, or $p\leq 37$ and $\ell=3$.  Note here that $mu_{1}=4$. Then $m=2,4$, and since $\lambda=(p^{\ell-2}-1)/m$, we obtain $\lambda=5,7,11,13$ when $(p,\ell,m)=(11,3,2), (29,3,4),(23,3,2),(3,5,2)$, respectively. But this leads to no possible parameter $k$ satisfying Lemma \ref{lem:six}(a). \smallskip
	
	\noindent \textbf{(ii)} Let $i=3$. Note that $\ell>i=3$. By \eqref{eq:C4-k-h}, we know that $k$ divides $\lambda h\cdot f_{\ell}(p)$, where $h=p^2+p+1$ and $f_{\ell}(p)=p^{\ell}-1$. Then there exists a positive integer  $u$ such that $uk=\lambda h f_{\ell}(p)$. Since $v-1=p^{3\ell}-1$, by \eqref{eq:k-lam}, we have that
	\begin{align}\label{eq:C4-i3-k-lam}
		k=1+\frac{u\cdot (p^{2\ell}+p^{\ell}+1)}{h} \quad \text{ and }  \quad h^2\lambda=u^2\cdot(p^{\ell}+2)+\frac{3u^2+uh}{f_{\ell}(p)}.
	\end{align}
	
	Suppose first that $j$ is even. Note that $\lambda$ is an odd prime divisor of $p^j-1$ where $1\leq j\leq \ell$. Then there exists an even positive integer such that $m\lambda=p^{j/2}-\e1$, where $\e=\pm$. By \eqref{eq:C4-i3-k-lam}, we have that $h^2\cdot(p^{j/2}-\e1)-mu^2\cdot(p^{\ell}+2)>0$, and so
	\begin{align*}
		u^2<\frac{h^2\cdot (p^{j/2}-\e1)}{m\cdot(p^{\ell}+2)}.
	\end{align*}
	We know that $8(p^{\ell/2}+1)<(p^{\ell}+2)$ for $\ell\geq 4$, and so as $j\leq \ell$ and $m$ is even, the latter inequality yields $u< h/4$. This together with that fact that $f_{\ell}(p)$ divides $3u^2+uh$ implies that $16(p^{\ell}-1)<7h^2$, or equivalently, $16(p^{\ell}-1)<7(p^2+p+1)^2$, which is impossible for odd prime $p$.\smallskip

	Suppose now that $j$ is odd. Then $m\lambda=p^j-1$ with  $1\leq j\leq \ell$, for some even positive integer $m$. Again, \eqref{eq:C4-i3-k-lam} implies that $u^2<h^2\cdot(p^{j}-1)/[m(p^{\ell}+2)]\leq h^2\cdot(p^{\ell}-1)/[m(p^{\ell}+2)]<h^2/2$, and so $u<h/\sqrt{2}$. This together with the fact that $f_{\ell}(p)=p^{\ell}-1$ divides $3u^2+uh$, we conclude that $p^{\ell}-1<3h^2$. This inequality holds only when $(p,\ell)=(3,5)$ or $\ell=4$. If $(p,\ell)=(3,5)$, then since $\lambda=(3^j-1)/m$ with $1\leq j\leq 5$ odd and $m$ even, we obtain $\lambda=11,13$ when $(m,j)=(2,3),(22,5)$, respectively. However, Lemma \ref{lem:six}(a) gives no possible parameter $k$. If $\ell=4$, then by \eqref{eq:C4-i3-k-lam} and the fact that $k$ divides $\lambda hf_{\ell}(p)$, we conclude that $u\cdot (p^{8}+p^{4}+1)+h $ divides $h^2\cdot (p^{j}-1)(p^{4}-1)/m$, and since $\gcd(u\cdot (p^{8}+p^{4}+1)+h, p^{4}-1)$ divides $3u+h$, it follows that $u\cdot (p^{8}+p^{4}+1)+h$ divides $h^2\cdot (3u+h)(p^{j}-1)/m$. Since also $u<h/\sqrt{2}$, we conclude that $m\cdot (p^{8}+p^4+1)<4h^3(p^{j}-1)$. If $j=1$, then this inequality holds for $(p,m)=(3,2)$ which implies that $\lambda=(p-1)/2=1$, which is not our case. Then since $\ell=4$, we have that $j=3$. We repeat our argument above to find a better bound for $u$ in terms of $h$. Since $u^2<h^2\cdot(p^{j}-1)/[m(p^{\ell}+2)]=h^2\cdot(p^{3}-1)/[m(p^{4}+2)]<h^2/(2p)$, and hence $u<h/\sqrt{2p}$. Since $p^{\ell}-1=p^4-1$ divides $3u^{2}+uh$, it follows that $p^{4}-1<(3+\sqrt{2p})h^2/(2p)$, or equivalently, $[2p(p^4-1)-3h^2]^2<2ph^4$, where $h=p^2+p+1$. This inequality is true only when $p=3$, and so $\lambda=(3^3-1)/m$ has to be $13$, and hence $v=3^{9}$. But this leads to no possible parameter $k$ satisfying Lemma \ref{lem:six}(a).\smallskip

	\noindent \textbf{(iii)} Let $i=4$ and $\ell\in \{5,6,7\}$. By \eqref{eq:C4-k-h}, there exists a positive integer $u$
	such that $uk=\lambda h\cdot f_{\ell}(p)$, where $f_{\ell}(p)=p^{\ell}-1$ and $h=(p^{4}-1)/(p-1)$.
	Since $v-1=p^{4\ell}-1$, by \eqref{eq:k-lam}, we have that
	\begin{align}\label{eq:k-lam-C4-i4}
		k=1+\frac{u\cdot (p^{3\ell}+p^{2\ell}+p^{\ell}+1)}{h} \quad \text{ and }  \quad h^2\lambda=u^2\cdot(p^{2\ell}+2p^{\ell}+3)+\frac{4u^2+uh}{f_{\ell}(p)}.
	\end{align}
	Moreover, $m\lambda=p^{j}-1$, for some even positive integer $m$. By the same argument as in the previous cases, using  \eqref{eq:k-lam-C4-i4}, we conclude that $u^2<h^2(p^j-1)/[m\cdot(p^{2\ell}+2p^{\ell}+3)]$, and since $\ell\in\{5,6,7\}$ and $m$ is even, it follows that $u< p$. Then by \eqref{eq:k-lam-C4-i4} and the fact that $\lambda$ is an integer, we conclude that $p^{\ell}-1$ divides $hp+4p^2$, and hence $p^{5}-1<p^{4}+p^{3}+5p^{2}+p+1$, which has no suitable solution for $p$.
	\smallskip
	
	\noindent \textbf{(3)}  Suppose now that $H$ lies in a member of $\Cmc_{7}$. Then $H$ stabilises a tensor product of spaces of $V$, so $V_{d}(p)=V_{t} \bigotimes \cdots \bigotimes V_{t}$ with $d=t^{\ell}$ and  $\ell,t>1$. Moreover, $H\leq N_{\GL_{d}(p)}(\GL_{t}(p) \circ \cdots \circ \GL_{t}(p))$. Here the nonzero vectors $v_1\otimes \cdots \otimes v_{\ell}$ form a union of $H$-orbits of length $h_{t,\ell}(p):=(p^t-1)^{\ell}/(p-1)^{\ell-1}$. Then by Lemma~\ref{lem:six}, we conclude that
	\begin{align}\label{eq:C7-k}
		k \quad \text{divides} \quad \lambda\cdot h_{t,\ell}(p).
	\end{align}
	Note by Lemma~\ref{lem:six}(c) and Proposition~\ref{prop:lam-p} that $\lambda$ is an odd prime divisor of $p^{j}-1$, for some $1\leq j \leq t$.
	Since $v=p^{t^{\ell}}$, it follows that from \eqref{eq:u} that $p^{t^{\ell}}<\lambda\cdot (p^{t}-1)^{2\ell}/(p-1)^{2\ell-2}$. Since $\lambda<p^{t}$,  we have that $t^{\ell}<t\cdot(2\ell+1)$, or equivalently, $t^{\ell-1}<2\ell+1$ which is true when $(t,\ell)\in\{(2,2), (2,3), (2,4), (3,2), (4,2)\}$. For each pair $(t,\ell)$, let $g_{t,\ell}(p):=\gcd((v-1)/(p^{t}-1),h_{t,\ell}(p)/(p^{t}-1))$. Then $g_{t,\ell}(p)$ divides $2$, $2^{4}$, $2^{9}$, $3$ or $4$ if $(t,\ell)=(2,2)$, $(2,3)$, $(2,4)$, $(3,2)$ or $(4,2)$, respectively. Then by \eqref{eq:C7-k} and Lemma \ref{lem:six}(a), we have that
	\begin{equation}\label{eq:C7-2-1}
		k \mid \lambda\cdot g_{t,\ell}(p)\cdot(p^{t}-1).
	\end{equation}
	Let $(t,\ell)=(2,2)$. Then by \eqref{eq:C7-2-1},
	$k$ divides $2\lambda\cdot (p^{2}-1)$, and so $uk=2\lambda (p^{2}-1)$ for some positive integer $u$, and since  $v-1=p^{4}-1$, by \eqref{eq:k-lam}, we have that
	\begin{align}\label{eq:C7-k-lam}
		2k=u\cdot (p^{2}+1)+2 \quad \text{ and }  \quad 4\lambda=u^2+\frac{2u^2+2u}{p^{2}-1}.
	\end{align}
	Since $\lambda$ is an odd prime divisor of $p^j-1$ with $j\in\{1,2\}$, there exists an even positive integer $m$ such that $m\lambda=p-\e1$, where $\e=\pm$. Then by \eqref{eq:C7-k-lam} and the fact that $k$ divides $2\lambda (p^{2}-1)$, we conclude that
	$u\cdot (p^{2}+1)+2$  divides $2u(p-\e1)(p^2-1)=2(p-\e1)[u\cdot(p^2+1)+2]-4\cdot(p-\e1)(u+1)$,  then $u\cdot(p^2+1)+2$ divides $4\cdot(p-\e1)(u+1)$, and so $u\cdot(p^2+1)<8u(p-\e1)\leq 8u(p+1)$. Therefore, $p=3,5,7$, but for such a $p$, the prime divisors of $p-\e1$ are $2$ and $3$,  and hence $\lambda=2$ or $3$, which is not our case. For the remaining cases, that is to say, when  $(t,\ell)=(2,3), (2,4), (3,2), (4,2)$, by \eqref{eq:k-lam}, there exists a positive integer $u$ such that
	\begin{align*}
		g_{t,\ell}(p)k=\frac{ u\cdot(p^{t^{\ell}}-1)}{p^{t}-1}+g_{t,\ell}(p),
	\end{align*}
	where $g_{t,\ell}(p)$ divides $2^{4}$, $2^{9}$, $3$ or $4$, respectively. Moreover, $\lambda$ divides $p^{j}-1$, with $1\leq j\leq t$. Then by \eqref{eq:C7-2-1}, we conclude that $u\cdot(p^{t^{\ell}}-1)+g_{t,\ell}(p)(p^{t}-1)\leq g_{t,\ell}(p)^{2}(p^{j}-1)(p^{t}-1)^{2}$. Since $\lambda$ is a prime divisor of $p^{j}-1$ with $j\leq t$, it follows that $\lambda\leq p^{2}+p+1$, and hence
	\begin{align}\label{eq:C7-3}
		(p^{t^{\ell}}-1)+g_{t,\ell}(p)(p^{t}-1)\leq g_{t,\ell}(p)^{2}\cdot (p^{2}+p+1)(p^{t}-1)^{2}.
	\end{align}
	Thus
	\[
	\begin{array}{ll}
		p=3,5,7,11,13&\text{if $(t,\ell)=(2,3)$,} \\
		p=3&\text{if $(t,\ell)=(2,4)$,} \\
		p=3,5,7&\text{if $(t,\ell)=(3,2)$.} \\
	\end{array}
	\]
	Again since $\lambda$ is a prime divisor of $p^{j}-1$ with $j\leq t$, we conclude that $\lambda=2,3,5,7$ if $(t,\ell)=(2,3)$, $\lambda=2$ if $(t,\ell)=(2,4)$, $\lambda=2,3,13,19,31$ if $(t,\ell)=(3,2)$, however, we cannot find any possible parameter $k$ satisfying Lemma \ref{lem:six}(a).\smallskip
	
	\noindent \textbf{(4)} Suppose now that $H$ lies in a member of $\Cmc_{5}$. Then $H\leq \N_{\GL_{d}(p)}(\GL_{n}(q_{0}))$ with $q=q_{0}^{t}$
	and $t>1$, but this normaliser lies in a subgroup of $\GL_{t}(q_{0})\circ
	\GL_{n}(q_{0})$ of $\GL_{tn}(q_{0})\leq \GL_{d}(p)$, and hence $H$ lies in a
	maximal member of type $\Cmc_{4}$ or $\Cmc_{7}$ of $\GL_{d}(p)$, which have been ruled out in cases (2) and (3), respectively.\smallskip

	\noindent \textbf{(5)} Suppose finally that $H$ lies in a member of $\Cmc_{6}$. Then $H$ lies in the normaliser of an irreducible symplectic type $s$-group $R$, where $s$ is prime and $s\neq p$. Note in this case that Aschbacher's proof in \cite[Section 11]{a:Aschbacher-84} shows that we may assume that $G_{0}$ contains $R$, otherwise, $G_{0}$ lies in some other family $\mathcal{C}_{i}$, but these possibilities have already been ruled out. Therefore, it follows from Lemma 3.7, Lemma 3.10 and its proof and Corollary 3.12 in \cite{a:Foulser-69} that $\Zbb_{s} \cong \Z(R) \leq \Z(\GL_{n}(q))$, and hence $s>2$ by Lemma~\ref{lem:diff}.
	In this case, by Lemma \ref{lem:six}(c), we have that
	\begin{align}\label{eq:C6-k-1}
		k \quad \text{divides} \quad \log_{p}(q)\cdot (q-1)\cdot s^{m^{2}+2m} \cdot \prod_{j=1}^{m}(s^{2j}-1)).
	\end{align}
	By the proof of \cite[Lemma 3.7]{a:Liebeck-98-Affine}, we also have
	\begin{align}\label{eq:C6-k-0}
		k \quad \text{divides} \quad \lambda \cdot  (q-1)\cdot \gcd(\frac{q^{s^m}-1}{q-1},  s^{m^2+2m}\cdot\log_{p}(q) \cdot \prod_{j=1}^{m}(s^{2j}-1)),
	\end{align}
	where $s$ divides $q-1$. Since $\gcd((q^{s^m}-1)/(q-1),s^{m^2+2m})$ divides $s^{m}$, we have that
	\begin{align}\label{eq:C6-k}
		k \quad \text{divides} \quad \lambda \cdot  s^{m}\cdot  (q-1)\cdot \gcd(\frac{q^{s^m}-1}{q-1},  \log_{p}(q) \cdot \prod_{j=1}^{m}(s^{2j}-1)),
	\end{align}
	Here, $v=q^{s^{m}}$, where $s$ is an odd prime and $q=p^a$ with $p\neq 2$ and $a\geq 1$. Since $\lambda$ is an odd prime divisor of $k$, it follows from \eqref{eq:C6-k}, we conclude that $\lambda$ divides $a$, $q-1$, $s$, $s^{j}-1$ or $s^{j}+1$, where $j \in \{1,\ldots ,m\}$. Since $s$ is odd and $p\neq 2$, we get $\lambda<(q-1)\cdot s^{m}$. Then by \eqref{eq:u} and \eqref{eq:C6-k-0}, we have
	\begin{align}\label{eq:C6-i-odd}
		q^{s^m-3}< s^m\cdot \gcd(\frac{q^{s^m}-1}{q-1}, a\cdot s^{m^2+2m}\cdot\prod_{j=1}^{m}(s^{2j}-1))^2,
	\end{align}
	and so
	\begin{align*}
		q^{s^m-3}< s^{3m}\cdot\gcd(\frac{q^{s^m}-1}{q-1}, a\prod_{j=1}^{m}(s^{2j}-1))^2.
	\end{align*}
	Since also $s\leq q-1$ and $a^{2}\leq q-1$, it follows that $s^m<2m^2+5m+5$ which is true when $(s, m)\in\{(3,1), (3,2), (3,3), (5,1), (7,1), (11,1) \}$. If $(s,m)=(3,1)$, then $q\equiv 1\mod{3}$ and by \eqref{eq:C6-k}, we know that $k$ divides $3a\lambda (q-1)$. Then there exists a positive integer $u$  such that $uk=3a\lambda (q-1)$. Since $v-1=q^{3}-1$, by \eqref{eq:k-lam}, we have that
	\begin{align}\label{eq:C6-k-lam}
		3ak=u\cdot (q^2+q+1)+3a \quad \text{ and }  \quad 9a^2\lambda=u^2(q+2)+\frac{3u^2+3au}{q-1}.
	\end{align}
	Note that here $\lambda$ divides $a$ or $q-1$. Then by \eqref{eq:u}, we conclude that
	\begin{align}\label{eq:C6-u}
		u<3a.
	\end{align}
	As $\lambda$ is an integer, by \eqref{eq:C6-k-lam} and \eqref{eq:C6-u}, we get $p^a\leq 36a^2$. Since $3\mid q-1$, we conclude that $q\in \{7,13,19,25,31,49,121\}$. These values of $q$ do not lead to any possible parameter set. If $(s,m)\neq (3,1)$, then it is easy to check that the inequality \eqref{eq:C6-i-odd} does not hold unless $(s,m,p,a)=(5,1,11,1)$, for which we obtain no possible parameter set by Lemma \ref{lem:six}(a).
\end{proof}

\subsection{Quasisimple case}

In this section, we deal with the case where  $H^{(\infty)}$, the last term in the derived series of $H:=G_{0}$, is quasisimple, and its action on $V=V_{n}(q)$ is absolutely irreducible and not realizable over any proper subfield of $\Fbb_q$ as in Proposition~\ref{pro:Aschbacher}(b). Thus  $L=H^{(\infty)}/Z(H^{(\infty)})$ is a finite non-abelian simple group, and so it is an alternating group, a sporadic group, a group of Lie type in characteristic $p$, or a group of Lie type in characteristic $p'$. We follow the method given in  \cite{a:Liebeck-98-Affine}.
Moreover, by the argument given in \cite[p. 210]{a:Liebeck-98-Affine} for symmetric designs we frequently use the facts in Lemma \ref{lem:affine-elem}.

\begin{lemma}\label{lem:affine-elem}
	Let $\Dmc$ be a nontrivial symmetric $(v,k,\lambda)$ design with $\lambda\geq 5$. Let also $G$ be a flag-transitive automorphism group of $\Dmc$ of affine type satisfying {\rm Proposition~\ref{pro:Aschbacher}(b)}, then
	\begin{enumerate}[\rm \quad (a)]
		\item $k$ divides $\lambda\gcd(q^{n}-1,(q-1)|\Aut(L)|)$;
		\item $q^{n}<\lambda\gcd(q^{n}-1,(q-1)|\Aut(L)|)^{2}$.
	\end{enumerate}
	In particular, if $G$ is not a subgroup of $\AGaL_{1}(q)$ and $\lambda$ is an odd prime, then $\lambda$ divides $(q-1)_{2'}$ or $|\Aut(L)|_{2'}$, and so $\lambda \leq \max\{(q-1)_{2'},|\Aut(L)|_{2'}\}$.
\end{lemma}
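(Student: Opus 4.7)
The plan is to first establish the structural bound $|H|$ divides $(q-1)|\Aut(L)|$, and then combine it with the standard divisibility properties of flag-transitive symmetric designs from Lemma \ref{lem:six} to extract (a), (b) and the ``in particular'' consequence.

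For the structural bound, I would argue as follows. By Proposition \ref{pro:Aschbacher}(b), $H^{(\infty)}$ is quasisimple and acts absolutely irreducibly on $V = V_{n}(q)$, so Schur's lemma forces $C_{\GL_{n}(q)}(H^{(\infty)}) = Z(\GL_{n}(q))$, a cyclic group of order $q-1$; in particular $|Z(H^{(\infty)})|$ divides $q-1$. Conjugation of $H$ on $H^{(\infty)}$ yields a homomorphism $H \to \Aut(H^{(\infty)})$ whose kernel is $C_{H}(H^{(\infty)})$, which lies in the centraliser of $H^{(\infty)}$ in $\GaL_{n}(q)$ and hence has order dividing $(q-1)\cdot\log_{p}q$. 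Since $H^{(\infty)}$ is perfect, every automorphism of $H^{(\infty)}$ is determined by its induced action on $L = H^{(\infty)}/Z(H^{(\infty)})$, so $\Aut(H^{(\infty)})$ embeds in $\Aut(L)$. Absorbing the $\log_{p}q$ field automorphisms into $|\Aut(L)|$ (as is standard for Lie type groups and is the convention used in \cite{a:Liebeck-98-Affine}), one obtains that $|H|$ divides $(q-1)|\Aut(L)|$.

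For (a), Lemma \ref{lem:six}(c) gives $k \mid |H|$, so $k \mid (q-1)|\Aut(L)|$; meanwhile $\lambda(v-1) = k(k-1)$ yields $k \mid \lambda(v-1) = \lambda(q^{n}-1)$. An elementary prime-by-prime valuation check shows that whenever $k \mid A$ and $k \mid \lambda B$, one has $k \mid \lambda \gcd(A,B)$ (no primality of $\lambda$ is required). Applying this with $A = (q-1)|\Aut(L)|$ and $B = q^{n}-1$ gives (a). Part (b) is then immediate from $\lambda v < k^{2}$ in Lemma \ref{lem:six}(c), by substituting the bound in (a) and dividing through by $\lambda$. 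For the final assertion, note that under Proposition \ref{pro:Aschbacher}(b) the subgroup $H^{(\infty)}$ is nontrivial quasisimple, so automatically $G \not\leq \AGaL_{1}(q)$, and the hypothesis appears only for emphasis. Since $\lambda$ is an odd prime dividing $k$ by the reduction in the preamble of Section \ref{sec:proof}, we obtain $\lambda \mid (q-1)|\Aut(L)|$; as $\lambda$ is odd it divides the $2'$-part $(q-1)_{2'}|\Aut(L)|_{2'}$, and being a prime it must divide one of the two factors, giving the claimed bound.

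The main obstacle is the structural inequality $|H| \mid (q-1)|\Aut(L)|$, which requires careful handling of field automorphisms when passing from $\GL_{n}(q)$ to $\GaL_{n}(q)$ and of the quasisimple-versus-simple distinction via perfection of $H^{(\infty)}$; once this is in place, the remaining steps are routine arithmetic built on Lemma \ref{lem:six}.
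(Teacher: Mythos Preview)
Your proof is correct and follows essentially the same approach as the paper's: establish $|H|\mid (q-1)|\Aut(L)|$, combine with $k\mid |H|$ and $k\mid \lambda(v-1)$ from Lemma~\ref{lem:six} to get (a), then use $\lambda v<k^2$ for (b), and finally derive the ``in particular'' from $\lambda\mid k$. The paper simply cites the argument on \cite[p.~210]{a:Liebeck-98-Affine} for the structural divisibility $|G_0|\mid (q-1)|\Aut(L)|$, whereas you sketch it via Schur's lemma and the embedding $\Aut(H^{(\infty)})\hookrightarrow \Aut(L)$; your remark about absorbing the $\log_p q$ field automorphisms into $|\Aut(L)|$ is exactly the delicate point, and the full justification uses that the module is not realizable over a proper subfield (so a nontrivial field automorphism cannot centralize $H^{(\infty)}$), which is the content of Liebeck's argument you cite. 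Your explicit verification that $k\mid A$ and $k\mid \lambda B$ imply $k\mid \lambda\gcd(A,B)$ is a nice addition the paper leaves implicit.
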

\begin{proof}
	Since $v=q^n$ and $|G_{0}|$ is a divisor of $(q-1)|\Aut(L)|$, it follows from Lemma~\ref{lem:six}(a) and (c) that $k$ divides $\lambda \gcd(q^{n}-1,(q-1)|\Aut(L)|)$. Since also $\lambda v<k^{2}$, we have that $q^{n}<\lambda \gcd (q^{n}-1,(q-1)|\Aut(L)|)^{2}$.
	In particular, if $G$ is not a subgroup of $\AGaL_{1}(q)$ and $\lambda$ is an odd prime, then \cite{a:Biliotti-CP-sym-affine} implies that $\lambda$ is a prime divisor of $k$, and so by part (a), $\lambda$ divides $(q-1)|\Aut(L)|$. Thus $\lambda$ divides $(q-1)_{2'}$ or $|\Aut(L)|_{2'}$, and consequently, $\lambda \leq \max \{(q-1)_{2'},|\Aut(L)|_{2'}\}$.
\end{proof}

\begin{lemma}\label{lem:affine-Alt}
	The group $L$ cannot be an alternating group $\A_c$ with $c\geq 5$.
\end{lemma}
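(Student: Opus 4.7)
The plan is to exploit the fact that the possibilities for $H^{(\infty)}$ are restricted (either $\A_c$ itself or one of its proper perfect central extensions $2.\A_c$, $3.\A_6$, $3.\A_7$, $6.\A_6$, $6.\A_7$), combined with known lower bounds for the dimension of a faithful absolutely irreducible representation of such a group over $\Fbb_q$ that is not writable over a proper subfield. Feeding these into the inequality $q^n<\lambda\gcd(q^n-1,(q-1)|\Aut(L)|)^2$ from Lemma~\ref{lem:affine-elem}(b), together with the particular bound $\lambda\leq\max\{(q-1)_{2'},|\Aut(L)|_{2'}\}$ (valid once we discard the harmless case $G\leq\AGaL_1(q)$), I expect to reduce to a short list of candidates $(c,q,n)$.

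Concretely, for the basic spin representations of $2.\A_c$ in odd characteristic, the dimension satisfies $n\geq 2^{\lfloor (c-1)/2\rfloor}$, which grows exponentially in $c$ and clashes immediately with the polynomial upper bound $|\Aut(L)|\leq 2\cdot c!$ through the above inequality; only a handful of small values of $c$ survive. The exceptional covers $3.\A_{6}$, $3.\A_{7}$, $6.\A_{6}$, $6.\A_{7}$ have finitely many small-dimensional irreducible modules, each tabulated in the Modular Atlas, and can be cleared one by one by checking Lemma~\ref{lem:six}(a)--(b) against the candidate parameter triples. The single genuinely delicate subcase is the fully deleted permutation module of $\A_c$, of dimension $n=c-1$ or $c-2$, where $n$ grows only linearly in $c$ and the Liebeck bound alone is insufficient.

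For this remaining case I would sharpen the argument as follows. The non-zero $\A_c$-orbits on the deleted permutation module are parametrised by the multiset of coordinate values (modulo the all-ones vector when $p\mid c$), and their lengths are multinomial coefficients of the form $\binom{c}{a_1,a_2,\dots}$. By Proposition~\ref{prop:lam-p}(b3) the block $B$ is a single $H$-orbit not containing $0$, so $k$ must equal one of these multinomial values; combining this with $k(k-1)=\lambda(q^{n}-1)$, with the divisibility $k\mid \lambda\gcd(q^{n}-1,(q-1)|\Aut(L)|)$ from Lemma~\ref{lem:affine-elem}(a), and with the primality of $\lambda\geq 5$, one can force $c$ and the admissible orbit type into a very short explicit list. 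The principal obstacle is then the small-$c$, small-$q$ regime, where several admissible orbit lengths coexist; here I would fall back on the square condition of Lemma~\ref{lem:six}(b), on Lemma~\ref{lem:diff}, and, as in the rest of Section~\ref{sec:proof}, on \textsf{GAP} computations to rule out the remaining finitely many parameter triples.
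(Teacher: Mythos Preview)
Your overall strategy---bound $n$ from below via representation theory, bound $q^{n}$ from above via Lemma~\ref{lem:affine-elem}(b), and mop up the survivors---is the paper's strategy too, and your organisation by covering group ($\A_c$, $2.\A_c$, $3.\A_{6,7}$, $6.\A_{6,7}$) is a perfectly good alternative to the paper's split ``deleted permutation module versus everything else''.  Two points deserve sharpening, though.

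First, for $H^{(\infty)}=\A_c$ with $V$ \emph{not} the deleted permutation module you appeal only to ``known lower bounds''; the paper makes this explicit by invoking James' bound $n\geq c(c-5)/4$ for $c>15$ \cite{a:James-mini-dim-1983} and the Modular Atlas for $5\leq c\leq 15$.  Without naming such a bound your reduction to finitely many $c$ is not actually complete in that subcase, since the spin bound $2^{\lfloor (c-1)/2\rfloor}$ applies only when the centre of $2.\A_c$ acts faithfully.

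Second, your treatment of the deleted permutation module is more elaborate than necessary and contains a small slip: $B$ is a $G_{0}$-orbit, not an $\A_c$-orbit, and $G_{0}$ typically contains scalars and possibly the transposition class, so $k$ need not literally equal a multinomial coefficient.  The paper sidesteps this entirely by using Lemma~\ref{lem:six}(d) with a single explicit subdegree: the $H$-orbit of a vector with one nonzero coordinate (or two, when $p\mid c$) has length dividing $c(p-1)$ (respectively $c(c-1)(p-1)/2$), whence $k\mid \lambda c(p-1)$ (respectively $k\mid \lambda c(c-1)(p-1)/2$).  Combined with $\lambda<c(q-1)/2$ this gives a clean polynomial inequality in $c$ and $p$ that collapses to a short finite list without any orbit enumeration.
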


\begin{proof}
	Let $L$ be an alternating group $\A_c$ of degree $c\geq 5$. Since $\lambda$ is an odd prime divisor of $k$, it follows from Lemma~\ref{lem:affine-elem} that $\lambda \leq \max \{c,(q-1)/2\}$, and so
	\begin{equation}\label{eq:A-2,lam}
		\lambda <c\cdot(q-1)/2.
	\end{equation}
	We first show that $V=V_n(q)$ is not the fully deleted permutation module for $\A_c$. Assume to the contrary that $V=V_n(q)$ is the fully deleted permutation module. Then $q=p$, $n=c-1$ if $p\nmid c$ and $n=c-2$ if $p\mid c$. Also by the proof of \cite[Lemma 4.1]{a:Liebeck-98-Affine}, we conclude that  $k$ divides $\lambda f(p)$, where
	\begin{align*}
		f(p)=\left\{
		\begin{array}{ll}
			c\cdot (p-1), & \hbox{if $p\nmid c$,}\\
			c(c-1)\cdot(p-1)/2, & \hbox{if $p\mid c$.}
		\end{array}
		\right.
	\end{align*}
	If $p\nmid c$, then by inequalities \eqref{eq:u} and \eqref{eq:A-2,lam}, we conclude that $2p^{c-1} < c^3(p-1)^3$ which is true only when $c=5$ and $p=3$ or $7\leq p\leq 59$, $c=6$ and $p=5,7$, or $c=7$ and $p=3$. By Lemma \ref{lem:six},  we obtain $(v,k,\lambda)$ is $(81, 16, 3)$ or $(81, 80, 79)$ when $(c,p)=(5,3)$, however, both possibilities are not our case.
	If $p\mid c$, then \eqref{eq:u} and \eqref{eq:A-2,lam} imply that $8p^{c-2} < c^3(c-1)^2(p-1)^3$ which is true only when $(c,p)=(5,5), (6,3), (7,7), (9,3), (12,3)$, but for each case, we obtain no possible parameter by Lemma \ref{lem:six}.\smallskip

	Therefore, $V=V_n(q)$ is not the fully deleted permutation module for $\A_c$. Let $q=p^a$, where $p$ is an odd prime. In this case, $|\Aut(L)|$ is $c!$ or $2(c!)$ respectively for $c\neq 6$ or $c=6$. By Lemma~\ref{lem:affine-elem}, we have that
	\begin{align}\label{eq:Ai-q-odd-0}
		q^{n}<2c(q-1)^{3}\cdot (c!)_{p'}^2,
	\end{align}
	and so $q^{n-3}<2c(c!)^{2}$, and since $c!<c^{c-1}$ for $c\geq 5$, we conclude that $q^{n-3}<2c^{2c}$. Since also $p$ is odd, we have that $\log_{2}(p)>3/2$, and so
	\begin{align}\label{eq:Ai-q-odd-2}
		3a\cdot(n-3)<4c\cdot \log_2(c).
	\end{align}
	We now consider the following cases:\smallskip
	
	\noindent \textbf{(a)} Suppose first that $c>15$. Then $H^{(\infty)}=\A_c$ by Lemma \ref{lem:diff}, and hence \cite[Theorem 7]{a:James-mini-dim-1983} implies that $n\geq c(c-5)/4$. By \eqref{eq:Ai-q-odd-2}, we have that
	\begin{align}\label{eq:Ai-q-odd-1}
		3a\cdot(c^2-5c-12)<16c\cdot\log_2c,
	\end{align}
	which is true for $c\leq 32$. By \eqref{eq:Ai-q-odd-0}, we conclude that $q=3$ for $c\in \{16,\ldots,22\}$, which leads to no possible parameters by Lemma \ref{lem:six}.\smallskip

	\noindent\textbf{(b)} Suppose now that $12\leq c\leq 15$. Then $H^{(\infty)}=\A_c$ by Lemma \ref{lem:diff}, and by \cite{b:Atlas-Brauer,a:LP-1992}, we conclude that $n\geq 43$. If $q\geq 5$, then $q^{n-3}\geq 5^{40}>2\cdot15\cdot(15!)^2\geq 2c(c!)^2$, which is a contradiction. Thus $q=3$, and again $3^{n-3}=q^{n-3}<2c(c!)^2\leq 2\cdot 15\cdot(15!)^2$ implies that $n\leq 56$. Therefore, $v=3^n$ with $43\leq n\leq 56$. But for these possible values of $v$, we cannot find a possible parameter set satisfying Lemma \ref{lem:six}.\smallskip
	
	\noindent\textbf{(c)} Suppose now that $5\leq c\leq 11$. If $n\geq 4$, then by \cite{b:Atlas-Brauer} and \eqref{eq:Ai-q-odd-0}, we have one of the possibilities as below:
	\begin{enumerate}[\rm (1)]
		\item $5\leq c\leq 7$, $4\leq n\leq 20$ and $q\leq 355622400$;
		\item $8\leq c\leq 11$, $8\leq n\leq 37$ and $q\leq 2029$.
	\end{enumerate}
	For these values of $q$, by Lemma \ref{lem:six}, as $\lambda\geq 5$, we obtain $(v,k,\lambda)=(15625, 280, 5)$ when $c=7$, $n=6$ and $q=p=5$, but this case is ruled out in Proposition \ref{prop:lam-p}.
	
	We now deal with the remaining cases where $c=5,6,7$ and $n=2,3$, and so by inspecting the ordinary and modular characters of $\A_c$ and their covering groups from \cite{b:Atlas,b:Atlas-Brauer}, we observe that $(c,n)$ is $(7,3)$, $(6,2)$, $(6,3)$, $(5,2)$ or $(5,3)$.
	
	Let $(c,n)=(7,3)$. By \cite{b:Atlas,b:Atlas-Brauer}, we conclude that $p=5$ and $H^{(\infty)}=3\cdot \A_7$. Then by Lemma \ref{lem:six}(c), as $k$ is a divisor $7!\cdot (q-1)$ and $v<k^{2}$, we have that $q<(7!)^2$, and so $q=5^{a}$ with $1\leq a\leq \lfloor 2\log_{5}(7!) \rfloor$. Again by Lemma~\ref{lem:six}, we obtain the parameter set $(v,k,\lambda)=(15625, 280, 5)$ when $q=5^{2}$, which is a contradiction by Proposition \ref{prop:lam-p}. \smallskip
	
	Let $(c,n)=(6,3)$. Then since $p$ is an odd prime, by \cite{b:Atlas,b:Atlas-Brauer}, we conclude that $p\geq 3$. So by Lemma \ref{lem:six}(c), the parameter $k$ divides $2\cdot 6!\cdot (q-1)$. The fact that $v<k^{2}$ implies that $q<2073600$, however for these values of $q$, we obtain no possible parameters. By a similar argument, the case where $(c,n)=(5,3)$ can be ruled out.  \smallskip
	
	Let now $(c,n)=(6,2)$. Then it follows from \cite{b:Atlas,b:Atlas-Brauer} that $p=3$ and $H^{(\infty)}=2\cdot \A_6$, which is impossible by Lemma \ref{lem:diff}.
	
	Let finally $(c,n)=(5,2)$. By \cite{b:Atlas,b:Atlas-Brauer}, we have that $p\geq 3$. By Lemma \ref{lem:six}(c), there exists a positive integer $u$ such that $uk=5!\cdot(q-1)$, and so $v<k^{2}$ implies that  $u<120$, and again by Lemma~\ref{lem:six}(a), we conclude that $q+1$ divides $120(240+u)$, where $u<120$. Therefore, $q<3\cdot 120^2=43200$. We now apply Lemma~\ref{lem:six}, and obtain the parameter sets $(v,k,\lambda)=(49, 16, 5)$ or $(121, 25, 5)$ respectively when $q=7$ or $11$. Both cases are ruled out since $-1$ lies in $H^{(\infty)}$, and hence in $H$, by \cite{b:Atlas}.
\end{proof}

\begin{lemma}\label{lem:sporadic}
	The group $L$ cannot be a sporadic simple group or Tits group $^{2}\F_{4}(2)'$.
\end{lemma}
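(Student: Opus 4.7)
The plan is to eliminate, sporadic-by-sporadic (including the Tits group $^2\F_4(2)'$), each possibility for $L$ by combining the strong numerical restrictions of Lemma~\ref{lem:affine-elem} with the representation-theoretic data from \cite{b:Atlas,b:Atlas-Brauer}. Recall that $H^{(\infty)}$ is quasisimple with $H^{(\infty)}/Z(H^{(\infty)})=L$, and the module $V=V_n(q)$ is absolutely irreducible and not realizable over any proper subfield of $\Fbb_q$; $q=p^a$ with $p$ odd and $\lambda$ is an odd prime $\geq 5$. I will also make repeated use of Lemma~\ref{lem:six} and Lemma~\ref{lem:diff}, the latter forcing $-I\notin H$ whenever $v$ and $\lambda$ are odd.

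The first step is to reduce to a finite list of candidates. From Lemma~\ref{lem:affine-elem}(b) one has
\begin{equation*}
q^{n}<\lambda\,\gcd(q^{n}-1,(q-1)|\Aut(L)|)^{2}\leq \lambda\,(q-1)^{2}|\Aut(L)|^{2},
\end{equation*}
and combining with $\lambda\leq\max\{(q-1)_{2'},|\Aut(L)|_{2'}\}$ gives a crude bound
$q^{n-3}<|\Aut(L)|^{3}$. For every sporadic $L$ (and the Tits group), the Modular Atlas records the smallest dimension $n_{p}(L)$ of a faithful absolutely irreducible $\overline{\Fbb_{p}}$-representation of $H^{(\infty)}$ in odd characteristic $p$. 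For the very large sporadics — concretely the Monster $\M$, Baby Monster $\BM$, $\Fi_{24}'$, $\Fi_{23}$, $\Th$, $\HN$, $\Ly$, $\J_{4}$, $\Co_{1}$, $\Co_{2}$, $\Co_{3}$, $\Ru$, $\ON$, $\Suz$, $\He$, $\Fi_{22}$ — the minimal faithful dimension dwarfs $\log|\Aut(L)|$, so the inequality above already forces $q^{n-3}<|\Aut(L)|^{3}$ to fail, and these groups are eliminated at once. This leaves the smaller sporadic groups $\M_{11},\M_{12},\M_{22},\M_{23},\M_{24},\J_{1},\J_{2},\J_{3},\HS,\McL$ and ${}^{2}\F_{4}(2)'$, for which a finite list of admissible $(n,p,a)$ remains.

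The second step treats each surviving triple $(L,n,q)$. For each such triple I read $|H|$ off the Atlas (bounded by $(q-1)|\Aut(L)|$), sharpen the divisibility $k\mid\lambda\gcd(v-1,|H|)$, and enforce Lemma~\ref{lem:six}(a)--(c) together with the square condition $4\lambda(v-1)+1=x^{2}$ of Lemma~\ref{lem:six}(b). Because $\lambda\geq 5$ is prime and $\lambda\mid k$ by Proposition~\ref{prop:lam-p}, in all but a handful of cases the resulting Diophantine system has no integer solution with $k\leq v$, and this finishes those cases. Whenever a residual case does admit a candidate $(v,k,\lambda)$, I invoke Lemma~\ref{lem:diff}: since $H$ almost always contains the scalar $-I$ (equivalently, the unique central involution of the relevant covering group acts as $-I$ on $V$, which is evident from the Brauer character values listed in \cite{b:Atlas-Brauer}), the hypothetical difference set would force $v$ and $\lambda$ to be even, contradicting $v=q^{n}$ odd and $\lambda$ odd. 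The handful of triples where $-I\notin H$ — these occur only for a couple of small covers in small dimension over small fields — will be settled by direct computer check using \textsf{GAP} against the standing parameters, or by quoting the classification of small parameter designs from \cite{a:Braic-2500-power,a:Braic-2500-nopower,a:Braic-255}.

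The main obstacle will be the bookkeeping in the second step: for groups such as $\M_{24}$, $\McL$, $\J_{3}$ and especially ${}^{2}\F_{4}(2)'$, the list of small-dimensional modular representations is not short, and several of them come in pairs over $\Fbb_{q}$ with specific realisability conditions that must be tracked to ensure $n$ is the minimal dimension over $\Fbb_{q}$ (so that $V$ is not realizable over a proper subfield, as required by Proposition~\ref{pro:Aschbacher}(b)). In practice each such case reduces to verifying a short list of parameter candidates $(v,k,\lambda)$ and ruling them out with one of Lemma~\ref{lem:six}(b), Lemma~\ref{lem:diff}, or the subdegree divisibility Lemma~\ref{lem:six}(d) applied to an explicit $H$-orbit on $V\setminus\{0\}$ whose length is read from the Atlas permutation character or from \textsf{GAP}.
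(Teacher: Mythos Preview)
Your first-step elimination is incorrect, and this leaves a genuine gap. You claim that the crude inequality $q^{n-3}<|\Aut(L)|^{3}$ already fails for the ``very large'' sporadics $\Co_{1},\Co_{2},\Co_{3},\Ru,\ON,\Suz,\He,\Fi_{22},\HN,\Th$, and hence that these groups are eliminated at once. This is false. Take for instance $L=\Suz$: the minimal faithful dimension in odd characteristic is $12$, while $|\Aut(\Suz)|\approx 9\times 10^{11}$, so with $q=3$ one has $3^{9}\approx 2\times 10^{4}$, which is vastly smaller than $|\Aut(\Suz)|^{3}\approx 7\times 10^{35}$. The same failure occurs for $\He$ ($n_{\min}=18$), $\Co_{3}$ ($n_{\min}=22$), $\Co_{1}$ ($n_{\min}=24$), $\HN$ ($n_{\min}=56$), $\Th$ ($n_{\min}=48$), and the rest of your list: in each case $3^{n_{\min}-3}$ is many orders of magnitude below $|\Aut(L)|^{3}$. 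Only the six groups $\J_{4},\Ly,\Fi_{23},\Fi_{24}',\BM,\M$ are genuinely eliminated at this stage, exactly as the paper records. Since your second step explicitly treats only $\M_{11},\ldots,\McL$ and ${}^{2}\F_{4}(2)'$, the ten groups above are never handled at all.

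The paper's proof proceeds differently and avoids this error: after eliminating the six largest sporadics, it keeps all the remaining groups (including $\Co_{1},\Co_{2},\Co_{3},\Suz,\He,\Ru,\ON,\Fi_{22},\HN,\Th$) and for each one uses the bound $q^{n}<\lambda(q-1)^{2}|\Aut(L)|^{2}$ together with $n\geq \l_{n}(L)$ to extract explicit upper bounds $\u_{n}(L)$ on $n$ and $\u_{q}(L)$ on $q$ (tabulated), then runs through the resulting finite list with Lemma~\ref{lem:six}. The only surviving parameter set is $(15625,280,5)$ for $L\in\{\M_{22},\J_{2}\}$ with $(n,q)=(6,5)$, which is excluded by Proposition~\ref{prop:lam-p} since $\lambda=p=5$. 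Your reliance on the $-I$ argument via Lemma~\ref{lem:diff} is not needed here and would in any case require checking, module by module, whether the central involution of the relevant cover acts as $-I$; the paper's more direct numerical approach sidesteps this entirely.
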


\begin{proof}
	Suppose that $L$ is a sporadic group or Tits group $^{2}\F_{4}(2)'$. By Lemma \ref{lem:affine-elem}(b), we have
	\begin{equation}\label{eq:sporadic}
		q^{n}<\lambda(q-1)^2\cdot |\Aut(L)|^2,
	\end{equation}
	where $\lambda\neq p$ is an odd prime divisor of $(q-1)|\Aut(L)|$ greater than $3$.
	A lower bound $\l_{n}(L)$ for the minimal degree $R(L)$ of nontrivial projective representations of $L$ for all primes $p$ can be read off from  \cite[2.3.2]{a:LPS90} and \cite{b:Atlas,b:Atlas-Brauer}. Therefore, $n\geq \l_{n}(L)$ for each $L$, and hence, by \eqref{eq:sporadic}, we observe that $L$ cannot be one of the groups $\J_4$, $\mathrm{\Ly}$, $\Fi_{23}$, $\mathrm{Fi}^{'}_{24}$, $\BM$ and $\M$. For the remaining groups which are listed in Table~\ref{tbl:sporadic}, as $n\geq \l_n(L)$, by \eqref{eq:sporadic}, we can find an upper bound $\u_n(L)$ of $n$ and
	an upper bound $\u_{q}(L)$ of $q$ as in the fourth and fifth columns of Table~\ref{tbl:sporadic}, respectively. For these values of $n$ and $q$, we use Lemma~\ref{lem:six}, and obtain the parameter set $(15625, 280, 5)$ when $L=\M_{22}$ or $\J_{2}$ for $(n,q)=(6,5)$, but it is impossible by Proposition \ref{prop:lam-p}.
\end{proof}
\begin{table}
	\centering
	\small
	\caption{Some bounds for $n$ and $q$ when $L$ is a sporadic simple group of the Tits group $^{2}\F_4(2)'$.}\label{tbl:sporadic}
		\begin{tabular}{lllll}
			\noalign{\smallskip}\hline\noalign{\smallskip}
			$L$ & $|\Aut(L)|$ & $\l_n(L)$ & $\u_n(L)$ & $\u_q(L)$ \\\noalign{\smallskip}\hline \noalign{\smallskip}
			$\M_{11}$ & $2^4\cdot 3^2\cdot 5\cdot 11$ & $5$ & 
			$18$ & $397$ \\
			$\M_{12}$ & $2^7\cdot 3^3\cdot 5\cdot 11$ & $6$ & 
			$24$ & $435$\\
			$\M_{22}$ & $2^8\cdot 3^2\cdot 5\cdot 7\cdot 11$ & $6$ & 
			$26$ & $941$ \\
			$\M_{23}$ & $2^7\cdot 3^2\cdot 5\cdot 7 \cdot 11\cdot 23$ & $11$ & 
			$31$ & $36$\\
			$\M_{24}$ & $2^{10}\cdot 3^3\cdot 5\cdot 7\cdot 11\cdot 23$ & $11$ & 
			$37$ & $73$\\
			$\J_{1}$ & $2^{3}\cdot 3\cdot 5\cdot 7\cdot 11\cdot 19$ & $7$ & 
			$23$ & $125$ \\
			$\J_{2}$ & $2^{8}\cdot 3^3\cdot 5^2\cdot 7$ & $6$ & 
			$27$ & $1099$ \\
			$\J_{3}$ & $2^{8}\cdot 3^5\cdot 5\cdot 17\cdot 19$ & $9$ & 
			$35$ & $193$\\
			$\HS$ & $2^{10}\cdot 3^2\cdot 5^3\cdot 7\cdot 11$ & $20$ & 
			$35$ & $7$\\
			$\McL$ & $2^8\cdot 3^6\cdot 5^3\cdot 7\cdot 11$ & $21$ & 
			$40$ & $9$ \\
			$\Suz$ & $2^{14}\cdot 3^7 \cdot 5^2 \cdot 7\cdot 11\cdot 13 $ & $12$ & $52$ & $245$ \\
			$\He$ & $2^{11} \cdot 3^3 \cdot 5^2 \cdot 7^3 \cdot 17 $ & $18$ & 
			$43$ & $17$ \\
			$\Ru$ & $2^{14} \cdot 3^3 \cdot 5^3 \cdot 7 \cdot 13\cdot 29 $ & $28$ & $48$ & $7$ \\
			$\ON$ & $2^{10} \cdot 3^4 \cdot 5 \cdot 7^3 \cdot 11\cdot 19\cdot 31 $ & $31$ &  $52$ & $6$ \\
			$\Co_1$ & $2^{21} \cdot 3^9 \cdot 5^4 \cdot 7^2 \cdot 11\cdot 13\cdot 23$ & $24$ & $80$ & $49$ \\
			$\Co_2$ & $2^{18} \cdot 3^6 \cdot 5^3 \cdot 7 \cdot 11\cdot 23$ & $22$ & $59$ & $23$ \\
			$\Co_3$ & $2^{10} \cdot 3^7 \cdot 5^3 \cdot 7 \cdot 11\cdot 23$ & $22$ & $51$ & $14$ \\
			$\Fi_{22}$ & $2^{18} \cdot 3^9 \cdot 5^2 \cdot 7 \cdot 11\cdot 13$ & $27$ & $61$ & $13$  \\
			$\HN$ & $2^{15} \cdot 3^6 \cdot 5^6 \cdot 7 \cdot 11\cdot 19$ & $56$ & $63$ & $3$ \\
			$\Th$ & $2^{15} \cdot 3^{10} \cdot 5^3 \cdot 7^2 \cdot 13\cdot 19\cdot 31$ & $48$ &  $73$ & $5$ \\
			$^{2}\F_{4}(2)'$ & $2^{11} \cdot 3^{3} \cdot 5^2\cdot 13$ & $26$ & $32$ & $4$ \\
			\noalign{\smallskip}\hline \noalign{\smallskip}
		\end{tabular}
\end{table}

\subsubsection{Lie type groups in defining characteristic}

In this section, we assume that $L$ is a finite simple group of Lie type in characteristic $p$, and show that Theorem \ref{thm:main} holds in this case. We use the same machinery developed by Liebeck in \cite{a:Liebeck-98-Affine}. Recall that $V=V_n(q)$ is an absolutely irreducible module for $H^{(\infty)}$ realized over no proper subfield of $\Fbb_q$, where $q=p^a$. Suppose that $L=H^{(\infty)}/Z(H^{(\infty)})=L(s)$ is a group of Lie type over $\mathbb{F}_s$, where $s$ is a power of $p$. Since $p$ is odd, it follows that $s$ is odd.

\begin{lemma}\label{lem:lie-6.1}
	There is a positive integer $t$, and a faithful irreducible projective $\Fbb_{p} L$-module of dimension $m$, such that at least one of the following holds:
	\begin{enumerate}[\rm \quad  (a)]
		\item $s=q^{t}$ and $\dim(V)=n=m^{t}$;
		\item $L$ is of type $^{2}\A_{l}$, $^{2}\D_{l}$ or $^{2}\E_{6}$, $s=q^{t/2}$, $t$ is odd, and $n=m^{t}$;
		\item $L$ is of type $^{3}\D_{4}$ , $s=q^{t/3}$, $t$ is not divisible by $3$, and $n=m^{t}$;
		\item $L$ is of type $^{2}\B_{2}$, $^{2}\G_{2}$ or $^{2}\F_{4}$, $s=q^{t}$, and $n\geq m^{t}$.
	\end{enumerate}
	In particular, $s\leq q^{t}$ and  $n\geq R_{p}(L)^{t}$, where $R_{p}(L)$ is the minimal dimension of a faithful projective representation of $L$ in characteristic $p$. Moreover, $k$  divides $\lambda (q-1)|L : \N_{L}(U)|$, where $U$ is a Sylow $p$-subgroup of $L$, and
	\begin{align}\label{eq:Chp-1}
		q^{n}<\lambda q^{2t\cdot(N+l)},
	\end{align}
	where $l$ is
	the rank of the simple algebraic group over $\bar{\Fbb}_p$ corresponding to $L$ and $N$ is
	the number of positive roots in the corresponding root system.
\end{lemma}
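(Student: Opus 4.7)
The plan is to use Steinberg's tensor product theorem to parametrize the absolutely irreducible $\bar{\Fbb}_p L$-modules and then track the minimal field of definition of $V$. Write $s = p^{a}$. For $L$ of untwisted Chevalley type, Steinberg's theorem decomposes any faithful absolutely irreducible $\bar{\Fbb}_p L$-module $V \otimes \bar{\Fbb}_p$ as a tensor product $W_{0} \otimes W_{1}^{(p)} \otimes \cdots \otimes W_{a-1}^{(p^{a-1})}$ of Frobenius twists of restricted irreducible modules. For $V$ to be definable over $\Fbb_{q}$ but over no proper subfield, the set of indices $i$ carrying a non-trivial factor must form a single $\mathrm{Gal}(\Fbb_{s}/\Fbb_{q})$-orbit of size $t$, forcing $s = q^{t}$ (so $t \mid a$) and $n = \dim V = m^{t}$, where $m$ is the common dimension of the Galois-conjugate factors $W_{i}$. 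This handles case (a).

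For the Steinberg–twisted families, a parallel analysis interlacing the Frobenius with the graph automorphism yields cases (b) and (c): for $L$ of type $^{2}\!\A_{l}$, $^{2}\!\D_{l}$ or $^{2}\!\E_{6}$, the order-$2$ automorphism gives $s = q^{t/2}$ with $t$ odd, while for $L = {}^{3}\!\D_{4}(s)$ the order-$3$ automorphism gives $s = q^{t/3}$ with $3 \nmid t$. For the Suzuki–Ree families $^{2}\!\B_{2}, {}^{2}\!\G_{2}, {}^{2}\!\F_{4}$, the irreducibles do not in general split as a full tensor product of Galois conjugates of a single restricted module, but the same combinatorics produces only the lower bound $n \geq m^{t}$ together with $s = q^{t}$, which is case (d). In every case $s \leq q^{t}$ is immediate, and $n \geq R_{p}(L)^{t}$ follows since each tensor factor has dimension at least $R_{p}(L)$.

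Next I would establish the divisibility assertion via a subdegree argument. By Proposition~\ref{prop:lam-p}(b3) there is a block $B$ with $G_{0} = G_{B}$ and $0 \notin B$, so $k = |B|$ is the length of a non-trivial $G_{0}$-orbit on $V\setminus\{0\}$. Let $v_{+} \in V$ be a non-zero vector of highest weight for a fixed Borel subgroup $B_{L} = \N_{L}(U)$, where $U$ is the Sylow $p$-subgroup of $L$ fixing $\langle v_{+}\rangle$ pointwise. Then the $H^{(\infty)}$-orbit of $v_{+}$ has length dividing $(q-1)\,|L:\N_{L}(U)|$, the factor $q-1$ coming from the action of the maximal torus on the line $\langle v_{+}\rangle$. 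Since this orbit is a union of $H$-orbits, Lemma~\ref{lem:six}(d) gives $k \mid \lambda (q-1)\,|L:\N_{L}(U)|$.

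Finally, inequality \eqref{eq:Chp-1} follows by combining Fisher's inequality $\lambda v < k^{2}$ from Lemma~\ref{lem:six}(c) with the divisibility just obtained and the standard order estimate $|L:\N_{L}(U)| < s^{N+l}$ (derived from $|L| < s^{2N+l}$ and $|\N_{L}(U)| \geq s^{N}(s-1)^{l}/|Z(L)|$). Substituting $k \leq \lambda(q-1)|L:\N_{L}(U)| < \lambda\, q \cdot q^{tN}$ into $\lambda v < k^{2}$ and using $s \leq q^{t}$ yields $q^{n} < \lambda q^{2+2tN} \leq \lambda q^{2t(N+l)}$, since $tl \geq 1$. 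The main obstacle will be the Suzuki–Ree case (for $p$ odd only $^{2}\!\G_{2}$ is relevant, but one still needs the lower bound $n \geq m^{t}$ extracted carefully from the non-standard representation theory), together with uniformly absorbing the centre and gcd factors into the estimate $|L:\N_{L}(U)| < s^{N+l}$ for all simple quotients $L$.
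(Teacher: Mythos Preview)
Your overall strategy mirrors the paper's proof, which simply cites Liebeck's Lemmas~6.1--6.3 from \cite{a:Liebeck-98-Affine}: the Steinberg tensor product analysis for parts~(a)--(d), the highest-weight-line orbit for the divisibility, and the order estimate for~\eqref{eq:Chp-1}. However, your implementation of the divisibility step contains a genuine error.

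You write that the $H^{(\infty)}$-orbit of $v_{+}$ ``is a union of $H$-orbits''. This is backwards: since $H^{(\infty)}\unlhd H$, each $H$-orbit is a union of $H^{(\infty)}$-orbits, so the $H^{(\infty)}$-orbit of $v_{+}$ is \emph{contained in} a single $H$-orbit, whose length is a \emph{multiple} of the $H^{(\infty)}$-orbit length. Knowing only that the $H^{(\infty)}$-orbit length divides $(q-1)\,|L:\N_{L}(U)|$ therefore gives no upper bound on the relevant subdegree, and Lemma~\ref{lem:six}(d) cannot be applied as you claim. The correct argument (this is the content of Liebeck's Lemma~6.2, which the paper cites) works on $P_{1}(V)$: one shows that the $H$-orbit of the highest-weight line $\langle v_{+}\rangle$ \emph{equals} the $L$-orbit, because $H$ normalises $L$ and the $L$-orbit of highest-weight lines is invariant under $\Aut(L)$ (automorphisms permute Borel subgroups, all of which are $L$-conjugate). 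Hence the $H$-orbit on $P_{1}(V)$ has length dividing $|L:\N_{L}(U)|$, and lifting to $V$ picks up at most a factor of $q-1$.

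A smaller point: in your final paragraph you first state $|L:\N_{L}(U)|<s^{N+l}$ but then substitute the bound $k<\lambda\,q\cdot q^{tN}$, which would need $|L:\N_{L}(U)|\leq s^{N}$; this fails already for $\PSL_{2}(s)$ where $|L:B|=s+1>s$. The paper avoids this by invoking Liebeck's Lemma~6.3 directly for $k<\lambda\,q^{t(N+l)}$, after which $\lambda v<k^{2}$ immediately yields~\eqref{eq:Chp-1}.
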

\begin{proof}
	The first part is  \cite[Lemma~6.1]{a:Liebeck-98-Affine}. It follows from the proof of \cite[Lemma~6.2]{a:Liebeck-98-Affine} that the group $H/(H\cap \mathbb{F}_{q}^{\ast})$ has an orbit on $P_{1}(V)$ of length dividing $|L : \N_{L}(U)|$, where $U$ is a Sylow $p$-subgroup of $L$. So by Lemma~\ref{lem:six}(d), we conclude that $k$  divides $\lambda (q-1)|L : \N_{L}(U)|$.
	By \cite[Lemma~6.3]{a:Liebeck-98-Affine}, we have  $k<\lambda q^{t(N+l)}$, and so by applying Lemma \ref{lem:six}(c), we obtain \eqref{eq:Chp-1}, as desired.
\end{proof}

\begin{table}
	\centering
	\small
	\caption{Some parameters in Lemma~\ref{lem:class-chp}.}\label{tbl:class-chp}
	\begin{tabular}{lllllll}
		\hline\noalign{\smallskip}
		$L$ &
		$\u_{0}(L)$ &
		$N+l$  &
		$R_p(L)$ &
		Comments \\
		\noalign{\smallskip}\hline\noalign{\smallskip}
		$\A_{l}(s)$ &
		$l+1$ &
		$l(l+3)/2$ &
		$l+1$ &
		$l\geq 1$, $s=q^{t}$ \\
		$\A_{l}^{-}(s)$ &
		$l+1$ &
		$l(l+3)/2$ &
		$l+1$ &
		$l\geq 2$, $s=q^{t}$ or $q^{t/2}$\\
		$\B_{l}(s)$ &
		$l$ &
		$l^2+l$  &
		$2l+1$ &
		$l\geq 3$, $s=q^{t}$\\
		$\C_{l}(s)$ &
		$l$ &
		$l^2+l$ &
		$2l$ &
		$l\geq 2$, $s=q^{t}$\\
		$\D_{l}(s)$ &
		$l$ &
		$l^2$ &
		$2l$ &
		$l\geq 4$, $s=q^{t}$\\
		$\D_{l}^{-}(s)$ &
		$l$ &
		$l^2$ &
		$2l$ &
		$l\geq 4$, $s=q^{t}$ or $q^{t/2}$\\
		\noalign{\smallskip}\hline\noalign{\smallskip}&
	\end{tabular}
\end{table}

\begin{lemma}\label{lem:class-chp}
	The group $L$ cannot be a finite simple classical group of Lie type in characteristic $p$.
\end{lemma}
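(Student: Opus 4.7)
The plan is to combine the dimension bound $n \geq R_p(L)^t$ coming from Lemma~\ref{lem:lie-6.1} with the order inequality $q^n < \lambda q^{2t(N+l)}$ from the same lemma, while controlling $\lambda$ via Lemma~\ref{lem:affine-elem}. Since $\lambda$ is an odd prime dividing $k$, and $k$ divides $\lambda(q-1)|L:\N_L(U)|$, a crude bound gives $\lambda < q^{2t(N+l)}/q^{n-2t(N+l)}$; in practice one first uses $\lambda \leq \max\{(q-1)_{2'},|\Aut(L)|_{2'}\}$, which in the defining-characteristic case is bounded by $sa \leq q^{t}\log_p q$ up to a factor depending on diagonal and graph automorphisms. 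Plugging into \eqref{eq:Chp-1} and using the values of $R_p(L)$ and $N+l$ listed in Table~\ref{tbl:class-chp}, one obtains an inequality of the form
\[
R_p(L)^t \;<\; 2t(N+l) + c\cdot t\log_q q + O(1),
\]
which has only finitely many solutions $(L,t)$ and, for each family, forces $t=1$ together with a short list of possibilities for the Lie rank $l$ of $L$.

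Next I would treat the case $t=1$ together with $n=u_0(L)$, i.e.\ the natural module for $L$. In this situation $G_0$ lies in $\N_{\GaL_n(q)}(X)$ for $X\in\{\SL_n(q),\Sp_n(q),\SU_n(q^{1/2}),\Omega^\varepsilon_n(q)\}$ and moreover contains $X=H^{(\infty)}$; this is exactly the configuration excluded by Lemma~\ref{lem:C8}, so all natural-module cases are killed at once. The remaining (non-natural) small cases have $n>u_0(L)$, with $L$ of small rank ($A_l$ with $l\leq 4$, ${}^2A_l$ with $l\leq 4$, $B_l, C_l, D_l, {}^2D_l$ with $l\leq 4$), and correspond to a finite and explicit list of low-dimensional projective modules obtainable from \cite{b:Atlas-Brauer} and the Steinberg/Weyl module formula; for each we have an explicit value of $n$ and an explicit divisor $|L:\N_L(U)|$, hence an explicit divisor $\lambda(q-1)|L:\N_L(U)|$ for $k$. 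Combining with $k(k-1)=\lambda(v-1)$, $v=q^n$, $4\lambda(v-1)+1$ a square, and the numerical constraints of Proposition~\ref{prop:lam-p} (e.g.\ $\lambda,1-4\lambda$ are squares in $\mathbb{F}_p$, and $k-\lambda$ is a nonsquare), one eliminates each surviving $(L,n,q)$ by a short arithmetic argument, exactly in the spirit of the $\Cmc_i$-cases handled in Lemma~\ref{lem:affine-geom}.

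Finally, one must verify the two ``twisted'' subtleties: for $L$ of type ${}^2A_l,{}^2D_l,{}^2E_6$ (resp.\ ${}^3D_4$) the relation $s=q^{t/2}$ (resp.\ $s=q^{t/3}$) slightly strengthens the bound \eqref{eq:Chp-1}; and for ${}^2B_2,{}^2G_2,{}^2F_4$ one has only $n\geq m^t$ rather than equality. In each case the same chain of inequalities applies and produces no new cases beyond those already handled.

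The main obstacle I anticipate is the bookkeeping in the small-rank, non-natural situation: the list of absolutely irreducible $\mathbb{F}_p L$-modules of dimension close to $R_p(L)$ (for example, the adjoint, the symmetric square, and for $A_l$ the Weyl modules with highest weight $\omega_2$) has to be enumerated carefully, and for each module one must write down $|L:\N_L(U)|$ precisely and verify that the Diophantine system (Lemma~\ref{lem:six}(a), (b), (c) together with Proposition~\ref{prop:lam-p}) has no solution with $\lambda\geq 5$ prime. This is where the computation becomes heaviest, but it is entirely parallel to the geometric-class analysis already carried out in Lemma~\ref{lem:affine-geom} and requires no essentially new idea.
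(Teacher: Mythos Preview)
Your overall strategy is the right one and matches the paper's, but two steps are genuinely incomplete.

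First, the reduction to $t=1$ is too optimistic. Plugging $R_p(L)^t$ and the crude bound for $\lambda$ into \eqref{eq:Chp-1} does \emph{not} eliminate $t=2$; the paper obtains $t\in\{1,2\}$ together with a short explicit table of $(L,n)$ for $t\geq 3$, and then spends a separate (nontrivial) subsection on $t=2$, where for $A_l^\varepsilon(q^2)$ and $C_l(q^2)$ one must identify $V\otimes\Fbb_{q^2}$ as $W\otimes W^{(q)}$ and use the orbit of the pure tensors $v\otimes v$ to get a sharp divisor of $k/\lambda$. Your sketch contains no mechanism for handling this.

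Second, and more seriously, your claim that after removing the natural module the remaining cases have ``$L$ of small rank ($A_l$ with $l\leq 4$, \ldots)'' is false. The inequality $R_p(L)<2(N+l)+O(1)$ for $t=1$ imposes \emph{no} bound on $l$ whatsoever (e.g.\ for type $A_l$ it reads $l+1<l(l+3)+O(1)$). What the paper actually does is split $t=1$ into $n\geq 2(N+l)$ and $n<2(N+l)$. The first range gives a finite list of pairs $(l,n)$ via \eqref{eq:chp-bouund}. The second range is the heart of the matter: here one invokes Liebeck's classification (\cite[Lemma~6.6 and Table~IV]{a:Liebeck-98-Affine}) of absolutely irreducible modules with $n<2(N+l)$, obtaining a fixed finite set of highest weights $\mu\in\{\lambda_2,\lambda_3,2\lambda_1,(1+p^i)\lambda_1,\lambda_1+p^i\lambda_l,\lambda_1+\lambda_l,\lambda_{l-1},\lambda_l\}$, each with an explicit formula for $n=\dim M(\mu)$ and for the relevant parabolic index $|L:P_\mu|$. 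Only \emph{after} fixing $\mu$ does the inequality $q^n<\lambda(k/\lambda)^2$ bound $l$ (and the bounds go up to $l=8$ or $9$ for $\mu=\lambda_2$, $l=6$ for $\mu=\lambda_3$, $l=6$ for $\mu=\lambda_l$ in type $D_l$, etc.). Enumerating modules ``from \cite{b:Atlas-Brauer}'' cannot substitute for this step, since the Brauer atlas covers only specific small $s$, whereas here $s=q$ is arbitrary; the highest-weight classification is essential.
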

\begin{proof}
	Let $L=L(s)$ be a finite simple classical group of Lie type with $s$ a power of $p$.
	Note by Lemma~\ref{lem:affine-elem} that $\lambda$ divides $(q-1)\cdot |\Aut(L)|$. Then as $\lambda\neq p$, by Proposition~\ref{prop:lam-p}, we conclude that $\lambda< s^{\u_{0}(L)}$, where $\u_{0}(L)$  is listed as in the second column of Table~\ref{tbl:class-chp}.  By Lemma \ref{lem:lie-6.1}, we have $s\leq q^t$ and  $n\geq R_{p}(L)^{t}$, where the value of $R_{p}(L)$ is recorded in Table~\ref{tbl:class-chp} with noting that $s$ is odd, see  \cite[Table 5.4.C]{b:KL-90}.
	Therefore, \eqref{eq:Chp-1} implies that
	\begin{align}\label{eq:Chp-2}
		n< t \cdot [\u_{0}(L) + 2(N+l)],
	\end{align}
	and hence $R_{p}(L)^{t}<t \cdot [\u_{0}(L) + 2(N+l)]$ as $n\geq R_{p}(L)^{t}$.
	This forces $t\in\{1,2\}$, or $L$ and $n$ are as one the rows of Table~\ref{tbl:class-chp-1}. We first deal with the possibilities in Table~\ref{tbl:class-chp-1}. For each group $L$ in this table, by Lemma~\ref{lem:lie-6.1}, we know that $k/\lambda$ divides $(q-1)\cdot |L:\N_{L}(U)|$ where $|L:\N_{L}(U)|$ is recorded as in the second column of the same table, and since $k/\lambda$ also divides $v-1=q^n-1$, we conclude that $k/\lambda$ divides $f(q)$ given in the forth column of Table~\ref{tbl:class-chp-1}. As $\lambda\neq p$ is a prime divisor of $k$ which is a divisor of $(q-1)\cdot|\Aut(L)|$, for each $L$, we obtain an upper bound $\u_{\lambda}(L)$ of $\lambda$ as in the last column of Table~\ref{tbl:class-chp-1}, and since $\lambda v< k^{2}$, it follows that $q^n<\u_{\lambda}(L)\cdot f(q)^{2}$, which is clearly impossible for each possibility recorded in Table~\ref{tbl:class-chp-1}. Therefore $t=1$ or $t=2$.
	\begin{table}
		\centering
		\caption{The pairs $(L,n)$ in Lemma~\ref{lem:class-chp}.}\label{tbl:class-chp-1}
		\small
		\begin{tabular}{lllllll}
			\hline\noalign{\smallskip}
			$L$ &
			$|L:\N_{L}(U)|$ &
			$n$ &
			$f(q)$ &
			$\u_{\lambda}(L)$ & \\
			\noalign{\smallskip}\hline\noalign{\smallskip}
			$\PSL_{2}(q^3)$ &
			$q^{3}+1$ &
			$8$,  $9$, $10$  &
			$q^2-1$ &
			$2q^{2}$ & \\
			$\PSL_{2}(q^3)$ &
			$q^{3}+1$ &
			$11,12,13, 14$  &
			$(q-1)(q^3+1)$ &
			$2q^{2}$ & \\
			$\PSL_{2}(q^4)$ &
			$q^{4}+1$ &
			$16,17,18, 19$  &
			$(q-1)(q^4+1)$ &
			$2q^{4}$ & \\
			$\PSL_{3}(q^3)$ &
			$(q^{3}+1)(q^{6}+q^{3}+1)$ &
			$27$   &
			$(q-1)(q^{6}+q^{3}+1)$ &
			$2q^6$ & \\
			$\PSL_{3}(q^3)$ &
			$(q^{3}+1)(q^{6}+q^{3}+1)$ &
			$28,\ldots,35$   &
			$(q-1)(q^3+1)$&
			$2q^6$ & \\
			$\PSU_{3}(q^{3/2})$ &
			$q^{9/2}+1$ &
			$27$, \ldots, $38$  &
			$(q-1)(q^{9/2}+1)$ &
			$q^3$ & \\
			$\PSU_{4}(q^{3})$ &
			$(q^{3}+1)(q^{6}+1)^2$ &
			$65$  &
			$(q-1)(q^{3}+1)(q^{6}+1)^2$ &
			$q^6$ & \\
			
			\noalign{\smallskip}\hline\noalign{\smallskip}&
		\end{tabular}
	\end{table}

	Suppose first that  $l=1$ and $L=\PSL_{2}(q^{t})$ with $t=1,2$. Since $\lambda\neq p$, we have that $\lambda<q^{t}+1$. We know that $|L:\N_{L}(U)|=q^{t}+1$. So by Lemma~\ref{lem:lie-6.1}, as $n=m^{t}$ and $k/\lambda$ divides $(q-1)\cdot|L:\N_{L}(U)|$, we conclude that $q^{m^{t}}<(q-1)^{2}(q^{t}+1)^{3}$ implying that $m^{t}<3t+4$ with $t=1,2$. Then $n=2,3,4,5$ if $t=1$, and $n=4$ if $t=2$. In the latter case, $L=\PSL_{2}(q^{2})\cong \POm^{-}_{4}(q)$ which has been treated in Lemma~\ref{lem:C8}, and so we have no example in this case. In the former case where $t=1$ and $n=2,3,4,5$, we have that $k/\lambda$ divides $q^2-1$. If $n=3$ or $5$, then since $\gcd(q^{n}-1,q^2-1)=q-1$ and $\lambda<q+1$, it follows that $q^n<(q+1)(q-1)^{2}$, which is impossible. Thus $L=\PSL_{n}(q)$ with $n=2,4$. If $n=4$, then by \eqref{eq:k-lam}, we have that $\lambda=u^2 + (2u^2+ u)/(q^2 - 1)$ for some positive integer $u$, and so $q^2-1$ divides $2u^2+u$ and $\lambda>u^2$. Since $\lambda<q+1$, we must have $q^2-1<3(q+1)$ which is true when $q=3$, and so $\lambda<4$, which is not the case. If $n=2$, then $n<2(N+l)$, and so by \cite[Lemma~6.6]{a:Liebeck-98-Affine}, $V=M(\mu)$, where $\mu$ is as in \cite[Table IV]{a:Liebeck-98-Affine} by replacing $\mu$ with $\lambda$ in this reference, see also \cite[Theorem 2.2]{a:Liebeck-HA-rank3}. Hence the dimension of $V$ must be at least $3$, which is a contradiction. Therefore, $l\geq 2$.
	
	We now improve the bound given in \eqref{eq:Chp-2}. Note by Lemma \ref{lem:lie-6.1} that $k/\lambda$ divides  $(q-1)\cdot|L:\N_{L}(U)|$, where $U$ is a Sylow $p$-subgroup of $L$, and we know that $|L:\N_{L}(U)|\leq \prod_{i\in I}[(s^i-1)/(s-1)]$, where $|I|=l$ and $I$ consists of positive integers with sum $N+l$. Since $s-1\geq 2s/3$, it follows that $|L:\N_{L}(U)|\leq (3/2)^{l}\cdot s^{-l}\cdot\prod_{i\in I}(s^i-1)$, and since $s= q^{ct}$ with $c\in \{1,1/2\}$, we have that  $k/\lambda<(3/2)^{l}q^{ctN}(q-1)$. If $\lambda\leq q^{ct\u_0(L)}$, then Lemma \ref{lem:six}(c) implies that $q^n<(9/4)^{l}q^{ct[\u_0(L)+2N]}(q-1)^2$, and since $\log_{q}(9/4)\leq \log_{3}(9/4)<3/4$, we conclude that
	\begin{align}\label{eq:chp-bouund}
		n<ct[\u_0(L)+2N]+(3/4)l+2,
	\end{align}
	where $c\in \{1,1/2\}$ and $\u_{0}(L)=l+1$ if $L$ is of type $\A_{l}^{\e}$, otherwise, $\u_{0}(L)=l$. In what follows, we discuss two possibilities $t=1$ or $t=2$ considering the fact that $l\geq 2$ and noting that in our argument, \eqref{eq:chp-bouund} is a useful tool for restricting parameters.\smallskip
	
	\noindent \textbf{(1)} Let $t=1$. We consider two cases where $n\geq 2(N+l)$ or $n<2(N+l)$, and in what follows, we discuss each case separately.\smallskip
	
	\noindent \textbf{(1.1)} If $n\geq 2(N+l)$, then we conclude from \eqref{eq:chp-bouund} that $c=1$, that is to say, $s=q$ for all simple groups $L$. Moreover,
	\begin{align}\label{eq:chp-nl}
		2(N+l)\leq n< \u_{0}(L)+2N+(3/4)l+2,
	\end{align}
	where $\u_{0}(L)$ is recorded in Table~\ref{tbl:class-chp} and the value of $N$ can be obtained from the same table for each $L$. In particular, $\u_0(L)+2N+(3/4)l+2>2(N+l)$ implies that $5l<4\u_{0}(L)+8$, and hence $l\leq 11$ if $L$ is of type $\A_{l}^{\e}$, otherwise, $l\leq 7$.
	
	Suppose that $L=\A_{l}^{\e}(q)$ with $l\leq 11$. It follows from \eqref{eq:chp-nl} that $(l,n)$ is one the pairs below:
	\begin{center}
		$(2, 10)$,  $(2, 11)$,  $(2, 12)$,  
		$(3, 18)$,  $(3, 19)$,  $(3, 20)$,  \\
		$(4, 28)$,  $(4, 29)$,  $(4, 30)$,  
		$(5, 40)$,  $(5, 41)$,  $(6, 54)$, $(6, 55)$,  \\
		$(7, 70)$,  $(7, 71)$,  
		$(8, 88)$,  $(8, 89)$,  $(9, 108)$,  
		$(10, 130)$,  $(11, 154)$.
	\end{center}
	Note by Lemma~\ref{lem:lie-6.1} that $q^n<(q^{l+1}-1) (q-1)^2|L:\N_{L}(U)|^2$, where $q$ is odd and $U$ is a Sylow $p$-subgroup of $L$. If $L=\A_{l}(q)$, then $|L:\N_{L}(U)|=\prod_{i=2}^{l+1}[(q^{i}-1)/(q-1)]$, and so this inequality holds only when $(l,n)=(2,10), (3,18)$. If $(l,n)=(2,10)$, then $k/\lambda$ divides $\gcd(3,q-1)\cdot(q^2-1)$ and $\lambda\leq q^{2}+q+1$, and so $q^{10}<3^2(q^{2}+q+1)(q^2-1)^2$, which is impossible. If $(l,n)=(3,18)$, then $k/\lambda$ divides $(q+1)(q^2+q+1)(q^{4}-1)$ and $\lambda\leq q^{2}+q+1$, and so $q^{12}<(q+1)^2(q^{4}-1)^2(q^{2}+q+1)^{3}$, which is true only for $q=2$, but it is not the case. If $L=\A_{l}^{-}(q)$, then $|L:\N_{L}(U)|\leq \prod_{i=2}^{l+1}[(q^{i}-(-1)^{i})/(q-(-1)^{i})]$, and since $q^n<(q^{l+1}-(-1)^{l+1}) (q-1)^2|L:\N_{L}(U)|^2$, we obtain $(l,n)=(2,10), (3,18)$. If $(l,n)=(2,10)$, then $k/\lambda$ divides $(q^2-1)(q^{2}-q+1)$ and $\lambda\leq q^{2}-q+1$, and so $q^{10}<(q^2-1)^2(q^{2}-q+1)^{3}$, which is impossible. If $(l,n)=(3,18)$, then $k/\lambda$ divides $(q+1)(q^2-q+1)(q^{4}-1)$ and $\lambda\leq q^{2}+1$, and so $q^{12}<(q+1)^2(q^2+1)(q^{4}-1)^2(q^{2}-q+1)^{2}$, which is impossible.
	
	Suppose that $L=\B_{l}(q)$ or $\C_{l}(q)$ with $l\leq 7$. Here, $N=l^{2}$ and $\u_{0}(L)=l$. By \eqref{eq:chp-nl}, we obtain $(l,n)\in \{(2,12)$, $(2,13)$, $(3,24)$, $(3,25)$, $(4,40)$, $(4,41)$, $(5,60)$, $(6,84)$, $(7,112)\}$ with noting that $l\geq 3$ for type $\B_{l}$. For these pairs of $(l,n)$, the inequality $q^n<(q^{l}+1) (q-1)^2|L:\N_{L}(U)|^2$ is true only when $(l,n,q)=(3,24,3)$, or $L=\PSp_{4}(q)$ and $n=12$. In the former case, since $\lambda\geq 5$ is an odd prime divisor of $2|\POm_{7}(3)|=2|\PSp_{6}(3)|$, we have $\lambda=5$, $7$ or $13$, but for none of these possibilities $k^{2}-k=\lambda(3^{24}-1)$ has an integer solution, which is a contradiction. Let $L=\PSp_{4}(q)$ and $n=12$. Then $k/\lambda$ divides both $q^{12}-1$ and $(q-1)|L:\N_{L}(U)|=(q-1)(q+1)^2(q^2+1)$, and so $k/\lambda$ divide $\gcd(6,q+1)\cdot (q+1)(q^2+1)$. Thus $q^n<\lambda(k/\lambda)^{2}$ implies that $q=3$ or $5$. By Proposition~\ref{prop:lam-p}, since $\lambda\geq 5$ is a prime divisor of $2\log_p(q)|\PSp_{4}(q)|$, we obtain $\lambda=5$ if  $q=3$, and $\lambda=13$ if $q=5$, and both cases lead to no possible parameter $k$ satisfying Lemma~\ref{lem:six}(a).
	
	Suppose finally that $L=\D_{l}^{\e}(q)$ with $l\leq 7$. We know that $N=l^{2}-l$,  $\u_{0}(L)=l$ and $l\geq 4$, it follows from \eqref{eq:chp-nl} that $(l,n)\in \{(4,32)$, $(4,33)$, $(5,50)$, $(6,72)$, $(7,98)\}$, and so by the same argument as in the previous cases, we obtain $(l,n)=(4,32)$. If $L=\D_{4}(q)$, then $k/\lambda$ divides $\gcd(q^{32}-1,(q + 1)^4(q^2 + 1)^2(q^4 + q^2 + 1))$ which is a divisor of $ 2^{16}\cdot 3\cdot(q + 1)(q^2 + 1)$, and since $\lambda$ is a prime divisor of $(q-1)|\Aut(L)|$, it follows that $\lambda\leq q^{2}+q+1$, and hence $q^{32}<2^{32}\cdot 3^{2}\cdot (q-1)^{2}(q + 1)^2(q^2 + 1)^2(q^{2}+q+1)$, which is true for $q=3$. In this case, $\lambda$ is $5$, $7$ or $13$, however, for each such a $\lambda$, the quadratic equation $k^{2}-k=\lambda(3^{32}-1)$ has no integer solution, which is a contradiction.  If $L=\D_{4}^{-}(q)$, then $k/\lambda$ divides $\gcd(q^{32}-1,(q + 1)^2(q^2 + 1)(q^4 + 1)(q^4 + q^2 + 1))$ which is a divisor of $2^{3}\cdot 3\cdot (q + 1)(q^2 + 1)$, and since $\lambda\leq q^{4}+1$, Lemma~\ref{lem:six}(c) implies that $q^{32}<2^{6}\cdot3^{2}\cdot (q-1)^{2}(q + 1)^2(q^2 + 1)^2(q^{4}+1)^{3}$, which does not hold for $q\geq 3$.\smallskip
	
	\noindent \textbf{(1.2)} If $n<2(N+l)$, then the proof of \cite[Lemma~6.7]{a:Liebeck-98-Affine} implies that $V=M(\mu)$, where $\mu$ is as in \cite[Table IV]{a:Liebeck-98-Affine} by replacing $\lambda$ with $\mu$ in the reference. For convenience, we list $\mu$ and the dimension of $M(\mu)$ in Table \ref{tbl:chp-mu} when $p$ is odd. Moreover, $k/\lambda$ divides $(q-1)|L:P_{\mu}|$, where $P_{\mu}$ is a parabolic subgroup corresponding to the set of fundamental roots on which $\mu$ does not vanish. Therefore, for each $L$, the parameter $k/\lambda$ divides the value in the fourth column of Table~\ref{tbl:chp-mu}. Since  $n=\dim(M(\mu))$, $\lambda<q^{c\,\u_0(L)}$ with $c\in\{1,1/2\}$ and $q^{n}<\lambda(k/\lambda)^{2}$, we will make a  similar argument as in the proof of \cite[Lemma~6.7]{a:Liebeck-98-Affine}.\smallskip
	
	\begin{table}
		\centering
		\caption{The dimension of $M(\mu)$ in Lemma~\ref{lem:class-chp}.}\label{tbl:chp-mu}
		\resizebox{\textwidth}{!}{
			\begin{tabular}{lllll}
				\noalign{\smallskip}\hline\noalign{\smallskip}
				$L$ & $\mu$ & $\dim M(\mu)$ & $k/\lambda$ divides & Comments\\
				\noalign{\smallskip}\hline\noalign{\smallskip}
				$\A_{l}^{\e}(q^{c})$ &
				$\lambda_{2}$ &
				$l(l+1)/2$ &
				$(q^{l}-1)(q^{l+1}-1)/(q^2-1)$ &
				$l \geq 3$
				\\
				&
				$\lambda_{3}$ &
				$l(l^2-1)/6$ &
				$(q^{l-1}-1)(q^{l}-1)(q^{l+1}-1)/(q^2-1)(q^3-1)$ &
				$l \geq 6$
				\\
				&
				$\lambda_{3}$ &
				$20$ &
				$(q^5-\e)(q^3+1)(q^2+1)$ &
				$l =5$
				\\
				&
				$2\lambda_{1}$ &
				$(l+1)(l+2)/2$ &
				$q^{l+1}-1$ &
				$l\geq 2$
				\\
				&
				$(1+p^i)\lambda_{1}$ &
				$(l+1)^2$ &
				$q^{l+1}-1$ &
				$l\geq 2$, $i>0$
				\\
				&
				$\lambda_{1}+p^i\lambda_{l}$ &
				$(l+1)^2$ &
				$(q^{l+1}-1)(q^{l}-1)/(q-1)$ &
				$l\geq 2$, $i>0$
				\\
				&
				$\lambda_{1}+\lambda_{l}$ &
				$l^2+2l-\delta$ &
				$(q^{l+1}-\e^{l+1}1)(q^{l}-\e^{l}1)(q-1)/(q-\e1)$ &
				$l\geq 2$, $\delta\in\{0,1\}$
				\\
				$\B_{l}(q)$ &
				$\lambda_{2}$ &
				$l(2l+1)$ &
				$(q^{2l-2}-1)(q^{2l}-1)/(q^2-1)$ &
				$l\geq 3$
				\\
				&
				$\lambda_{l}$ &
				$2^l$ &
				$(q-1)(q+1)\cdots (q^{l}+1)$ &
				$l\geq 3$
				\\
				$\C_{l}(q)$ &
				$\lambda_{2}$ &
				$l(2l-1)-\delta$ &
				$(q^{2l-2}-1)(q^{2l}-1)/(q^2-1)$ &
				$l\geq 2$, $\delta\in\{1,2\}$
				\\
				&
				$2\lambda_{1}$ &
				$2l^2+2,\ldots, 2l^2+l$ &
				$q^{l+1}-1$ &
				$l\geq 3$
				\\
				&
				$\lambda_{3}$ &
				$14$ &
				$(q^3+1)(q^4-1)$ &
				$l= 3$
				\\
				$\D^{\e}_{l}(q^{c})$ &
				$\lambda_{2}$ &
				$l(2l-1)-\delta$ &
				$(q^{l-2}+\e1)(q^{l}-\e1)(q^{2l-2}-1)/(q^2-1)$ &
				$l\geq 4$, $\delta\in\{0,1,2\}$
				\\
				&
				$\lambda_{l-1}$, $\lambda_{l}$ &
				$2^{l-1}$ &
				$(q-1)(q+1)\cdots (q^{l-1}+1)$ &
				$l\geq 4$
				\\
				\noalign{\smallskip}\hline\noalign{\smallskip}
			\end{tabular}
		}
	\end{table}

	\noindent \textbf{(i)} Let $\mu=\lambda_{2}$. Then $L$ is of type $\A_{l}^{\e}$ with $l\geq 3$, $\B_{l}$ with $l\geq 3$, $\C_{l}$ with $l\geq 2$ or $\D_{l}^{\e}$ with $l\geq 4$. \smallskip
	
	Suppose that $L=\A_{l}^{\e}(q^{c})$  with $l\geq 3$.  Here, the case where $c=1$ in the unitary case over $\Fbb_{s}$ occurs only when $l=3$, but in this case, $n=6$ and $L=\POm_{6}^{-}(q)$  which has already been treated in Lemma~\ref{lem:C8}. Since $k/\lambda$ divides $(q^{l}-1)(q^{l+1}-1)/(q^2-1)$ and $\lambda<q^{l+1}$, it follows from $q^{n}<\lambda(k/\lambda)^{2}$ that $l(l+1)/2<5(l+1)-5$ which is true for $l\leq 9$. But a direct computation shows that  $q^{l(l+1)/2}(q^{2}-1)^{2}<q^{l+1}(q^{l}-1)^{2}(q^{l+1}-1)^{2}$ has no possible solution for $l=9$.  Since  $\A_{3}^{\e}(q)\cong \POm_{6}^{\e}(q)$, it follows that $l\neq 3$ when  $n=6$ as it has been already discussed in Lemma~\ref{lem:C8}. Therefore, $l=4, \ldots 8$.
	
	If $l=4$, then $n=10$ and $k/\lambda$ divides $\gcd( (q^5-1)(q^2+1),q^{10}-1 )$ which is a divisor of $2(q^5-1)$. Then \eqref{eq:k-lam} implies that $4\lambda=u^{2}+2u(u+1)/(q^5-1)$, and so $u^{2}<4\lambda$ and $q^{5}-1$ divides $2u(u+1)$ for some positive integer $u$. Since $\lambda$ is a prime divisor of $(q-1)|\Aut(L)|$, it follows that $\lambda\leq q^{4}+q^{3}+q^{2}+q+1=(q^{5}-1)/(q-1)$. Since also $q^{5}-1\leq 2u(u+1)$, we have that $q^{5}-1\leq 4u^{2}<16\lambda\leq 16(q^{5}-1)/(q-1)$ implying that $q<17$, but for each such $q$, we find $\lambda$ as a prime divisor of $(q-1)|\Aut(L)|$, and none of these gives a possible parameter $k$ satisfying Lemma~\ref{lem:six}(a).
	
	If $l=5$, then $n=15$ and $k/\lambda$ divides $\gcd( (q^{4}+q^2+1)(q^5-1),q^{15}-1 )$ which is a divisor of $(q^2+q+1)(q^5-1)$. By \eqref{eq:k-lam}, there exists positive integer $u$ such that
	\begin{align}\label{eq:k-lam-chp-1}
		k=uf_{1}(q) + 1 \text{ and }
		\lambda=(q-2)u^{2} +\frac{u^{2}f_{2}(q)+ u}{(q^{5}-1)(q^{2}+q+1)},
	\end{align}
	where $f_{1}(q)=q^8 - q^7 + q^5 - q^4 + q^3 - q + 1$ and $f_{2}(q)=q^{6}+3 q^{5}-q^{4}+2 q^{3}-q^{2}-2 q-1$.
	Note that $\lambda$ is a divisor of $k$ and it is coprime to $u$. Moreover, $(q^{5}-1)(q^{2}+q+1)$ divides $u^2f_{2}(q)+ u$.  Note also that $\lambda$ is a prime divisor of $(q-1)|\Aut(L)|$. Then $\lambda$ is a prime divisor of $(q-1)/2$, $(q+1)/2$, $q^{2}+1$, $q^{2}-q+1$, $q^{2}+q+1$, $q^4 + q^3 + q^2 + q + 1$ or $q^4 - q^3 + q^2 - q + 1$. Since $v<\lambda(k/\lambda)^{2}$, we easily observe that $\lambda$ is not a divisor of $(q-1)/2$, and if $\lambda$ divides $(q+1)/2$, then $q=3$ for which $\lambda$ must be $2$, which is not the case.
	If $\lambda$ divides $g(q):= q^4 + q^3 + q^2 + q + 1=(q^5-1)/(q-1)$, which is a divisor of $u^2f_{2}(q)+ u=(3q^{3}+3)u^{2}g(q)+(q^{2}+2q-4)u^{2}+u$, the parameter $\lambda$ divides $(q^{2}+2q-4)u+1$. Since also $\lambda$ divides both $g(q)$ and $k=uf_{1}(q)+1= (q^4 - 2q^3 + q^2 + q - 2)g(q)u+(3q^3 + 3)u+1$, we have that $\lambda$ divides $(3q^3 + 3)u+1$, and as $\lambda$ is coprime to $u$ and it is a divisor of $(q^{2}+2q-4)u+1$, we conclude that $\lambda$ divides $3q^3-q^{2}-2q+7$. Thus $\lambda$ divides $\gcd(3q^3-q^{2}-2q+7, q^4 + q^3 + q^2 + q + 1)$, which is a divisor of $8455332=2^2\cdot 3\cdot 61\cdot 11551$, and hence $\lambda=61$ or $11551$. The fact that $q^{15}<\lambda(k/\lambda)^{2}$ implies that $q\leq 11551$, and for each such $q$, we obtain no parameter $k$ satisfying Lemma~\ref{lem:six}(a).
	If $\lambda$ divides $g(q):= q^4 - q^3 + q^2 - q + 1$ in the case where $L=\PSU_{6}(q^{1/2})$, then $k$ divides $(q-1)|\Aut(L)|$, and so $k/\lambda$ divides $\gcd((q^{5}-1)(q^{2}+q+1),(q-1)|\Aut(L)|)$. Thus $k/\lambda$ divides $12\log_{p}(q)\cdot 2\cdot 5^{3}(q-1)(q^2+q+1)$, and hence $q^{15}<(24\cdot 5^{3})^{2}\log_{p}^{2}(q)(q-1)^{2}(q^2+q+1)^{2}(q^4 - q^3 + q^2 - q + 1)$, which is true for $q<46 $, but these values of $q$ give no possible parameter set.
	If $\lambda$ divides $q^{2}+q+1$, then since $\lambda$ divides $k=uf_{1}(q) + 1=(q^{2}+q+1)(q^6 - 2q^5 + q^4 + 2q^3 - 4q^2 + 3q + 1)u-5qu+1$, the parameter $\lambda$ is a divisor of $5qu-1$, and since $(q-2)u^{2}< \lambda$ by \eqref{eq:k-lam-chp-1}, it follows that $u<5q/(q-2)$, and so $q=3$ or $u<10$ for $q\geq 3$. The case where $q=3$ is easily ruled out by Lemma~\ref{lem:six}(a). Thus $u<10$, and hence for each such $u$, $\lambda$ divides $\gcd(q^{2}+q+1, 5uq-1)$, which implies that $\lambda<2071$, but none of these possibilities leads to a possible $k$ satisfying Lemma~\ref{lem:six}(a). Similarly, if $\lambda$ divides $q^{2}-q+1$, then $\lambda$ divides $qu-1$, and so  $(q-2)u^{2}<qu$ implies that $u=1$, and so  $\lambda$ divides $\gcd(q^{2}-q+1,q-1)=1$, which is a contradiction. If $\lambda$ divides $q^{2}+1$, then $\lambda$ must divide $k=(q^6 - q^5 - q^4 + 2q^3 - q)(q^2+1)u+u+1$, and so $\lambda$ divides $u+1$, but then $(q-2)u^{2}<u+1$ for $q\geq 3$, which is a contradiction.
	
	If $l=6$, then $n=21$ and $k/\lambda$ divides $\gcd( (q^{4}+q^2+1)(q^7-1),q^{21}-1 )$ which divides $(q^2+q+1)(q^7-1)$. We first observe that $q\neq 3$ by Lemma~\ref{lem:six}(a) and direct computation. Thus we assume that $q\geq 5$. If $\lambda$ is a divisor of $q-1$, $q+1$, $q^{2}-q+1$, $q^{2}+q+1$ or $q^{2}+1$, we easily see that $q^{21}<\lambda(k/\lambda)^{2}$ fails for $q\geq 5$. For the remaining cases where $\lambda$ divides of $(q^5-\delta 1)/(q-\delta1)$ or $(q^7-\delta1)/(q-\delta1)$ with $\delta=\pm$, it follows from \eqref{eq:k-lam} that
	\begin{align}\label{eq:k-lam-chp-2}
		k=uf_{1}(q) + 1 \text{ and }
		\lambda=(q^3 - 2q^2 + q + 2)u^{2} -\frac{u^{2}f_{2}(q)+ u}{f(q)},
	\end{align}
	where $f(q)=(q^{7}-1)(q^{2}+q+1)$, $f_{1}(q) = q^{12} - q^{11} + q^9 - q^8 + q^6 - q^4 + q^3 - q + 1$ and $f_{2}(q)=4q^8 + 2q^7 - q^6 - q^5 + 2q^4 - q^3 - q^2 - 2q - 3$, for some positive integer $u$. Note that $f_{2}(q)>0$ and $f_{2}(q)<f(q)$ for $q\geq 5$. If $\lambda$ divides $g(q):=q^{4}-q^{3}+q^{2}-q+1$, then since $\lambda$ divides $k=uf_{1}(q) + 1=(q^8 - q^6 + q^5 - 2q^3 + q^2 + 2q - 2)g(q)u+(3q^2 - 5q + 3)u+1$, we conclude that $\lambda$ divides $(3q^2 - 5q + 3)u+1$. On the other hand by \eqref{eq:k-lam-chp-2}, as $f_{2}(q)>0$ and $f_{2}(q)<f(q)$ for $q\geq 5$, it follows that $(u^{2}f_{2}(q)+ u)/f(q)<u(u+1)$, and so $(q^3 - 2q^2 + q + 2)u^{2}-u(u+1)<\lambda$. Since $\lambda\leq (3q^2 - 5q + 3)u+1$, we conclude  that $(q^3 - 2q^2 + q)u<3q^2 - 5q + 4$, which is not true for $q\geq 5$. If $\lambda$ divides $q^{4}+q^{3}+q^{2}+q+1$, a same argument as above leads to  $(q^3 - 2q^2 + q)u<q^2 - q + 1$, which is also impossible.
	If $\lambda$ divides $g(q):=(q^{7}-1)/(q-1)$, then it is a divisor of $u^2f_{2}(q)+ u=(4q^2 - 2q - 3)u^{2}g(q)+(3q^4 + 3q)u^{2}+u$, and since $\lambda$ is coprime to $u$, the parameter $\lambda$ divides $(3q^4 + 3q)u+1$.
	Since also $\lambda$ divides both $g(q)$ and $k=uf_{1}(q)+1=(q^6 - 2q^5 + q^4 + q^3 - 2q^2 + q + 1)ug(q)-(3q^3 + 3)u+1$, we have that $\lambda$ divides $(3q^3 + 3)u-1$. Therefore, $\lambda$ divides both $(3q^3 + 3)u-1$ and $(3q^3 + 3)u+1$, which requires $\lambda=2$, which is a contradiction.
	If $\lambda$ divides $g(q):=(q^{7}+1)/(q+1)$, then  $L=\PSU_{7}(q^{1/2})$, and so $k$ divides $(q-1)|\Aut(L)|$, thus $k/\lambda$ divides $\gcd((q^{7}-1)(q^{2}+q+1),(q-1)|\Aut(L)|)$. Therefore, $k/\lambda$ divides $12\log_{p}(q)\cdot 7^{6}(q-1)(q^2+q+1)$, and
	hence $q^{21}<(24\cdot 7^{6})^{2}\log_{p}^{2}(q)\cdot (q-1)^{2}(q^2+q+1)^{2}(q^6 - q^5 + q^4 - q^{3} +q^{2}-q + 1)$, which is true for $q < 14$, however, we obtain no possible parameter sets for these values of $q$.
	
	If $l=7$, then $n=28$ and $k/\lambda$ divides $\gcd( (q^2+1)(q^{4}+1)(q^7-1),q^{28}-1 )$ which divides $2^3(q^2+1)(q^7-1)$, and since $\lambda\leq (q^7-1)/(q-1)$, we have that $q^{28}(q-1)<2^{6}(q^2+1)^2(q^7-1)^{3}$, which is true only for $q=3$, but this leads to no possible parameters satisfying Lemma~\ref{lem:six}(a).
	
	If $l=8$, then $n=36$ and $k/\lambda$ divides $\gcd( (q^2+1)(q^{4}+1)(q^9-1),q^{36}-1 )$ dividing $2^{4}(q^{2}+1)(q^{9}-1)$. Here, we again have  $\lambda\leq (q^7-1)/(q-1)$, and so $q^{28}(q-1)<2^{8}(q^2+1)^2(q^9-1)^{2}(q^7-1)$, which is impossible for $q\geq 3$.
	
	Suppose that $L=\B_{l}(q)$ with $l\geq 3$. Then $n=l(2l+1)$ and $k/\lambda$ divides $(q^{2l-2}-1)(q^{2l}-1)/(q^2-1)$, and so $q^{l(2l+1)}(q^{2}-1)^{2}<q^{l}(q^{2l-2}-1)^{2}(q^{2l}-1)^{2}$ which implies that $l(2l+1)+3<l+2(2l-2)+4l$, and so $l=2$, which is not the case.
	
	Suppose that $L=\C_{l}(q)$ with $l\geq 2$. Then $n=l(2l-1)-\delta$ with $\delta=1,2$, and $k/\lambda$ divides $(q^{2l-2}-1)(q^{2l}-1)/(q^2-1)$, and so $q^{l(2l-1)-2}(q^{2}-1)^{2}<q^{l}(q^{2l-2}-1)^{2}(q^{2l}-1)^{2}$ which implies that $[l(2l-1)-2]+3<l+2(2l-2)+4l$, and so $l=2,3,4$. If $l=2$, then $n=5$ and $L=\PSp_{4}(q)\cong \POm_{5}(q)$, but this case has been treated in Lemma~\ref{lem:C8}. If $l=3$, then $L=\PSp_{6}(q)$ and $n=13$ or $14$, and so $k/\lambda$ divides $\gcd((q^2+1)(q^6-1), q^n-1)$ which is a divisor of $2(q^{2}-1)$, and since $\lambda<q^l=q^{3}$, we have that  $q^{10}<2^2(q^2-1)^2$, but this inequality has no solution for $q\geq 3$. If $l=4$, then $L=\PSp_{8}(q)$ and $n=26$ or $27$, and so $k/\lambda$ divides $\gcd((q^2+1)(q^4+1)(q^6-1), q^n-1)$ which is a divisor of $2^2(q^{2}-1)(q^2+q+1)$, and hence   $q^{22}<2^4(q^2-1)^2(q^2+q+1)^{2}$, which has also no  positive integer solutions.
	
	Suppose that $L=\D_{l}^{\e}(q^{c})$ with $l\geq 4$, $c=1$ or $1/2$ when $\e=-$, and $c=1$ if $\e=+$. Then $n=l(2l-1)-\delta$ with $\delta=0,1,2$, and $k/\lambda$ divides $(q^{l-2}+\e1)(q^{l}-\e1)(q^{2l-2}-1)/(q^2-1)$. Since $\lambda<q^{l}$, we have that $l(2l-1)+1<l + 2[(l - 2) + 1] + 2(l + 1) + 2(2l - 2)$, and so $l=2$ or $3$, but this contradicts the assumption $l\geq 4$.\smallskip

	\noindent \textbf{(ii)} Let $\mu=\lambda_{3}$. Then $L$ is of type $\A_{l}^{\e}$ with $l\geq 5$, $\B_{3}$, $\C_{3}$, or $\D_{4}^{\e}$. \smallskip
	
	Suppose that $L=\A_{l}^{\e}(q^{c})$ with $l\geq 5$.
	If $l=5$, then $n=20$ and $L=\A_{5}^{\e}(q)$, and so $k/\lambda$ divides $\gcd(q^{20}-1, (q^5-\e)(q^3+1)(q^2+1))$.
	If $L=\PSL_{6}(q)$, then $k/\lambda$ divides $(q+1)(q^{2}+1)(q^{5}-1)$. If $\lambda$ is a divisor of $q\pm 1$, $q^{2}\pm q+1$ or $q^2+1$, then   $q^{20}(q-1)<(q^{2}+q+1)(q+1)^{2}(q^{2}+1)^{2}(q^{5}-1)^{2}$, and so $q=2$, which is not the case. If $\lambda$ is a divisor of $q^{4}+q^{3}+q^{2}+q+1$, then 
	by \eqref{eq:k-lam}, we have that 
	\begin{align}\label{eq:k-lam-chp-3}
		k=uf_{1}(q) + 1 \text{ and }
		\lambda=(q^4 - 2q^3 + q^2 + 2)u^{2} -\frac{u^{2}f_{2}(q)- u}{f(q)},
	\end{align}
	where $f(q)=(q+1)(q^{2}+1)(q^{5}-1)$, $f_{1}(q) = q^{12} - q^{11} + q^{8} - q^{6} + q^{4} - q + 1$ and $f_{2}(q)=2q^7 + 4q^6 + 2q^5 - q^4 - q^3 - 3q^2 - q - 3$, for some positive integer $u$. 
	Note that $f_{2}(q)>0$ and $f_{2}(q)<f(q)$ for $q\geq 3$. If $\lambda$ divides $g(q):=q^{4}+q^{3}+q^{2}+q+1$, then since $\lambda$ divides $k=uf_{1}(q) + 1=(q^8 - 2q^7 + q^6 + q^4 - 3q^2 + 2q + 1)g(q)u-4qu+1$, we conclude that $\lambda$ divides $4qu-1$. On the other hand by \eqref{eq:k-lam-chp-3}, as $f_{2}(q)>0$ and $f_{2}(q)<f(q)$ for $q\geq 5$, it follows that $(u^{2}f_{2}(q)- u)/f(q)<u(u+1)$, then  $(q^4 - 2q^3 + q^2 + 2)u^{2}-(u+1)u<\lambda$, and so $(q^4 - 2q^3 + q^2 + 1)u^2<4qu+u-1$ as  $\lambda\leq 4qu-1$. Thus $(q^4 - 2q^3 + q^2 + 1)u<4q+1$, which is false for $q\geq 3$. If $L=\PSU_{6}(q)$, then $k/\lambda$ divides $2^{2}(q^{2}+1)(q^{5}+1)$, and since $\lambda\leq (q^{5}+1)/(q+1)$, it follows that $q^{20}(q+1)<2^{4}(q^{2}+1)^{2}(q^{5}+1)^{3}$ implying that $q=3$, but again, we obtain no parameter sets satisfying Lemma~\ref{lem:six}(a).
	If $l\geq 6$, then  $n=l(l^{2}-1)/6$ and $k/\lambda$ divides $(q^{l-1}-1)(q^{l}-1)(q^{l+1}-1)/(q^2-1)(q^3-1)$, and since $\lambda<q^{l+1}$, it follows from $q^{n}<\lambda(k/\lambda)^{2}$ that
	$q^{l(l^2 - 1)/6}(q^2 - 1)^2(q^3 - 1)^2 < q^{l+1}(q^{l - 1} - 1)^2(q^l - 1)^2(q^{l + 1} - 1)^2$, and so $4+l(l^2 - 1)/6 < (l + 1) + 2(l - 1)) + 2l + 2(l + 1)$ implying that $l=6$, and so
	$q^{35}(q^2 - 1)^2(q^3 - 1)^2 < q^{7}(q^{5} - 1)^2(q^6 - 1)^2(q^{7} - 1)^2$, which has no integer solution for $q\geq 3$.
	
	Suppose that $L=\B_{3}(q)$ or $\D_{4}^{\e}(q^{c})$ with $c=1$ or $1/2$. Then $n=8$ and $k/\lambda$ divides $\gcd(q^{8}-1,(q^{3}+1)(q^{4}-1))$ which is a divisor of $2(q^{4}-1)$, and so \eqref{eq:k-lam} implies that
	\begin{align}\label{eq:k-lam-chp-4}
		2k=(q^{4}+1)u+2 \text{ and }
		4\lambda= u^{2}+\frac{2u(u+1)}{q^4-1},
	\end{align}
	for some positive integer $u$. This forces $u^{2}<4\lambda$ and $q^{4}-1\leq 2u(u+1)$, and since $\lambda\leq q^{2}+q+1$, it follows that
	$q^{4}-1\leq 2u(u+1)\leq 4u^{2}\leq 16\lambda\leq 16(q^{2}+q+1)$, and so $q^{4}-1\leq 16(q^{2}+q+1)$ implying that $q=3$, however, this gives no possible parameter set satisfying Lemma~\ref{lem:six}(a).
	
	Suppose that $L=\C_{3}(q)$.
	Then $n=14$ and $k/\lambda$ divides $\gcd(q^{14}-1,(q^{3}+1)(q^{4}-1))$ which is a divisor of $2^{3}\cdot7(q^{2}-1)$, and since $\lambda\leq q^{2}+q+1$, we must have $q^{14}<2^{6}\cdot 7^{2}(q^{2}+q+1)(q^{2}-1)^{2}$ implying that $q=2$, which is a contradiction.\smallskip
	
	\noindent \textbf{(iii)} Let $\mu=2\lambda_{1}$. Then $L$ is of type
	$\A_{l}^{\e}$ with $l\geq 2$, or
	$\C_{l}$ with $l\geq 2$.\smallskip
	
	Suppose that $L=\A_{l}^{\e}(q^{c})$ with $l\geq 2$. Then $n=(l+1)(l+2)/2$ and
	$k/\lambda$ divides
	$q^{l+1}-1$, and since $\lambda<q^{l+1}$, it follows from $q^{n}<\lambda(k/\lambda)^{2}$ that $l(l + 1)(l + 2)/2 < l + 1 + 2(l + 1)$ forcing $l=1$, which is a contradiction.
	
	Suppose that $L=\C_{l}(q)$ with $l\geq 2$. Then $n\geq 2l^{2}+2$ and
	$k/\lambda$ divides
	$q^{l+1}-1$, and since $\lambda<q^{l}$, it follows from $q^{n}<\lambda(k/\lambda)^{2}$ that $ 2l^{2}+2 < l  + 2(l + 1)$ which is true only for $l=1$, which is a contradiction.
	
	\noindent \textbf{(iv)} Let $\mu=(1+p^{i})\lambda_{1}$, $\lambda_{1}+p^{i}\lambda_{l}$ with $i>0$, or $\lambda_{1}+\lambda_{l}$. Then
	$L$ is of type
	$\A_{l}^{\e}$ with $l\geq 2$.
	
	In this case, $n\geq l^{2}+l$ and $\lambda<q^{l+1}$, and so by considering all cases, the inequality $q^{n}<\lambda(k/\lambda)^{2}$ forces $l=2,3$ if $\mu=\lambda_{1}+p^{i}\lambda_{l}$ or $\mu=\lambda_{1}+\lambda_{l}$.
	
	Let $l=2$.
	If  $\mu=(1+p^{i})\lambda_{2}$, then $n=9$ and $k/\lambda$ divides $\gcd(q^9-1,(q+1)(q^{3}-1))$, and so $k/\lambda$ divides $q^{3}-1$. Thus, \eqref{eq:k-lam} implies that $\lambda=(q^3 + 2)u^2 + (3u^2 + u)/(q^3 - 1)$, and so $(q^3 + 2)u^2<\lambda$. Since $\lambda$ divides $(q-1)|\Aut(L)|$, we have that $\lambda\leq q^{2}+q+1$, and hence $(q^3 + 2)u^2<q^{2}+q+1$, which is impossible.
	If $\mu=\lambda_{1}+\lambda_{2}$, then $n=7$ or $8$ and $k/\lambda$ divides $\gcd(q^n-1,(q^2-1)(q^{3}-\e1)/(q-\e1))$ which is a divisor of $7(q-1)$ if $n=7$, or $4(q^{2}-1)$ if $n=8$. Thus $q^{n}<(q^{2}+q+1)(k/\lambda)^{2}$ forces $q=3$, for which we find no possible parameter sets satisfying Lemma~\ref{lem:six}(a).
	
	Let $l=3$.
	If  $\mu=(1+p^{i})\lambda_{3}$, then $n=16$ and $k/\lambda$ divides $(q^{2}+q+1)(q^4-1)$, which fails to satisfy $q^{n}<\lambda(k/\lambda)^{2}$ for $q\geq 3$.
	If $\mu=\lambda_{1}+\lambda_{3}$, then $n=14$ or $15$. If $n=15$, then $k/\lambda$ divides $5(q^{3}-1)$ or $2\cdot3\cdot 5(q-1)$, respectively, for $\e=+$ or $\e=-$. Thus $k/\lambda\leq 5(q^{3}-1)$, and since $\lambda\leq q^{2}+q+1$, we must have $q^{15}<25(q^{2}+q+1)(q^{3}-1)^{2}$, which is not true for $q\geq 3$.
	If $n=14$, then $k/\lambda$ divides $2^{2}\cdot7(q^{2}-1)$ or $7(q^{2}-1)$, when $\e=+$ or $-$, respectively. Thus $k/\lambda\leq 2^{2}\cdot7(q^{2}-1)$, and so $q^{14}<2^{4}\cdot7^{2}(q^{2}+q+1)(q^{2}-1)^{2}$ which forces $q=2$, which is a contradiction.    \smallskip
	
	\noindent \textbf{(v)} Let $\mu=\lambda_{l}$ or $\lambda_{l-1}$. Then $L$ is of type $\D_{l}^{\e}$ with $l\geq 4$, or  it is of type
	$\B_{l}$ with $l\geq 4$ in the former case. Note that the cases where $\mu=\lambda_{3}$ and $L$ is of type $\B_{3}$ or $\D_{4}$ has been treated in (ii).\smallskip
	
	Suppose that $L=\B_{l}(q)$ with $l\geq 4$. Then $n=2^{l}$ and $k/\lambda$ divides $(q-1)(q+1)\cdots (q^{l}+1)$, and so $q^{n}<\lambda(k/\lambda)^{2}$ implies that $2^{l}<l(l+1)+3l+2$, which forces $l=4$ or $5$.
	If $l=4$, then $k/\lambda$ divides $\gcd(q^{16}-1,(q^3+1)(q^8-1))$ which is a divisor of $2(q^8-1)$, and so \eqref{eq:k-lam} implies that $4\lambda=u^2 + 2u(u + 1)/(q^8 - 1)$, and so $q^{8}-1$ divides $2u(u + 1)$, and since $\lambda\leq q^{4}+1$, we conclude that $q^{8}-1\leq 2u(u + 1)\leq 4u^{2}<16\lambda\leq 16(q^{4}+1)$, that is to say, $q^{8}-1< 16(q^{4}+1)$, which fails if $q\geq 3$.
	If $l=5$, then $k/\lambda$ divides $\gcd(q^{32}-1,(q^3+1)(q^{5}+1)(q^8-1))$ which is a divisor of $2^{4}(q^8-1)$, and since $\lambda\leq q^4 + q^3 + q^2 + q + 1$, we must have $q^{32}(q-1)<2^{8}(q^{5}-1)(q^8-1)^{2}$, which is not true for $q\geq 3$.
	
	Suppose that $L=\D_{l}^{\e}(q)$ with $l\geq 4$. Then $n=2^{l-1}$ and $k/\lambda$ divides $(q-1)(q+1)\cdots (q^{l-1}+1)$, and so $q^{n}<\lambda(k/\lambda)^{2}$ implies that $2^{l-1}<l(l-1)+3l$, and so $l=4,5$ or $6$.
	If $l=4$, then $k/\lambda$ divides $\gcd(q^{8}-1,(q^3+1)(q^4-1))$ which is a divisor of $2(q^4-1)$, and so \eqref{eq:k-lam} implies that
	$4\lambda=u^2 + 2u(u + 1)/(q^4 - 1)$, then $q^{4}-1$ divides $2u(u + 1)$. Since $2k=(q^{4}+1)u+1$ and $\lambda$ is a divisor of $k$, we conclude that $\lambda$ cannot divide $q_{4}+1$, and so $\lambda\leq q^{2}+q+1$. Thus, $q^{4}-1\leq 2u(u + 1)\leq 4u^{2}<16\lambda\leq 16(q^{2}+q+1)$, that is to say, $q^{4}-1< 16(q^{2}+q+1)$, forcing $q=3$, but this gives no possible parameter set.
	If $l=5$, then $k/\lambda$ divides $\gcd(q^{16}-1,(q^3+1)(q^8-1))$ which is a divisor of $2(q^8-1)$, and since $4\lambda=u^2 + 2u(u + 1)/(q^8 - 1)$ by \eqref{eq:k-lam}, we conclude that  $q^{8}-1$ divides $2u(u + 1)$, and since $\lambda\leq (q^{5}-1)/(q-1)$, we conclude that $q^{8}-1\leq 2u(u + 1)\leq 4u^{2}<16\lambda\leq 16(q^{5}-1)/(q-1)$, that is to say, $(q^{8}-1)< 16(q^4 + q^3 + q^2 + q + 1)$, which fails if $q\geq 3$.
	If $l=6$, then $k/\lambda$ divides $\gcd(q^{32}-1,(q^3+1)(q^{5}+1)(q^8-1))$ which is a divisor of $2^{2}(q^8-1)$, and since $\lambda\leq q^4 + q^3 + q^2 + q + 1$, we must have $q^{32}(q-1)<2^{4}(q^{5}-1)(q^8-1)^{2}$, which is not true for $q\geq 3$.
	\smallskip
	
	\noindent \textbf{(2)} Let $t=2$. If $L=\B_l(q^2)$, then $N=l^2$ and $\u_0(L)=l$, and so by \eqref{eq:chp-bouund}, we have that $n<2(l+2l^2)+(3/4)l+2$, and since $n\geq R_p(L)^2=(2l+1)^2$, we must have $(2l+1)^2<2(l+2l^2)+(3/4)l+2$, or equivalently, $5l < 4$, which is a contradiction. By the same argument, $L$ cannot be of type  $\D_{l}^{\e}$. Therefore, $L=\A_l^\e(q^2)$ or $\C_l(q^2)$.
	
	Suppose that $L=\A_{l}^{\e}(q^{2})$. Then by \eqref{eq:chp-bouund} we have that $n=m^{2}<2(l+1)^{2}+(3/4)l+2$ implying that $l=2$ or $m<(l+1)^2/2$. If $l=2$, then $L=\A_{2}^{\e}(q^2)$ and $m\geq l+1=3$, and since $n=m^{2}<2(l+4)^{2}$, we conclude that $m=3$ or $4$, or equivalently, $n=9$ or $16$. We know that $k/\lambda$ divides both $q^{n}-1$ and $(q-1)\cdot|L:\N_{L}(U)|=(q-1)(q^4-1)(q^{6}-\e1)/[(q^{2}-1)(q^{2}-\e1)]$. Then $k/\lambda$ divides $q^{3}-1$ or $3(q-1)(q^{2}+1)$ if $\e=+$ and $n=9$ or $16$, respectively, and it is a divisor of $q-1$ or $(q-1)(q^{2}+1)$ if $\e=-$ and $n=9$ or $16$, respectively. However, this violates Lemma~\ref{lem:six}(c). Therefore, $m<(l+1)^2/2$, and so by \cite[Theorem 1.1]{a:Liebeck1985}, we have $m=l+1$, and hence \cite[Proposition~5.4.6]{b:KL-90} implies that $L=\PSL_{l+1}(q^{2})$ and
	$V\otimes \Fbb_{q^{2}}=W\otimes W^{(q)}$,
	where $W=V_{l+1}(q^{2})$, the usual (projective) module for $L$. In this case, following the proof of \cite[Lemma~2.4]{a:Liebeck-HA-rank3}, $G_{0}$ has an orbit on the vectors of the form $v\otimes v$ of length $(q-1)(q^{2l+2}-1)/(q^2-1)$. Thus $k/\lambda$ divides $(q-1)(q^{2l+2}-1)/(q^2-1)$, and since $\lambda< q^{2(l+1)}$, Lemma~\ref{lem:six}(c) implies that $q^{(l+1)^{2}}=q^{n}<\lambda (k/\lambda)^{2}<q^{2(l+1)+4(l+1)}$, and so $(l+1)^{2}<6(l+1)$, which true when $l\leq 4$, and hence $l=2,3$ or $4$. If $l=2$, then $L=\PSL_{3}(q^{2})$ and $n=9$, and so $k/\lambda$ divides $\gcd(q^{6}-1,q^{9}-1)$, thus $k/\lambda\leq q^{3}-1$, and since $\lambda\neq p$ is a prime divisor of $(q-1)\cdot |\Aut(L)|$, it follows that $\lambda<q^2+q+1$, and hence $q^{9}<(q^{2}+q+1)(q^{3}-1)^{2}$, which contradicts Lemma~\ref{lem:six}(c). Similarly, if $l=4$, then $n=25$, $\lambda<q^{5}-1$ and $k/\lambda$ divides $\gcd(q^{25}-1,q^{10}-1)=q^{5}-1$, and so Lemma~\ref{lem:six}(c) implies that $q^{25}<(q^{5}-1)^{3}$, which is a contradiction. If $l=3$, then $n=16$, and so $k/\lambda$ divides $\gcd(q^{16}-1,q^{8}-1)=q^{8}-1$. Then there exists $u$ such that $ku=\lambda f(q)$, where $f(q)=q^{8}-1$, and so \eqref{eq:k-lam} implies that $\lambda=u^{2}+[(2u^{2}+u)/(q^{8}-1)]$, and hence $u^{2}<\lambda$ and $q^{8}-1$ divides $2u^{2}+u$. In this case, we know that $\lambda< q^{4}+1$, and so $q^{8}-1\leq 2u^{2}+u<3u^{2}<3\lambda< 3(q^{4}+1)$, which is impossible.
	
	Suppose now that $L=\C_{l}(q^{2})$. Then it follows from \eqref{eq:chp-bouund} that $m^{2}=n<2(l+2l^2)+(3/4)l+2$, and so  $m<(2l)^2/2$, and hence  by \cite[Theorem 1.1]{a:Liebeck1985}, we must have $m=2l$. Therefore, $L=\PSp_{2l}(q^{2})$ and $V\otimes \Fbb_{q^{2}}=W\otimes W^{(q)}$, where $W=V_{2l}(q^{2})$, the usual (projective) module for $L$. As in the  previous case, by the proof of \cite[Lemma~2.4]{a:Liebeck-HA-rank3}, the group $G_{0}$ has an orbit on the vectors of the form $v\otimes v$ of length $(q-1)(q^{2l}-1)/(q^2-1)$, and so $k/\lambda$ is divisor of $(q-1)(q^{2l}-1)/(q^2-1)$, and since $\lambda< q^{2l}$, Lemma~\ref{lem:six}(c) implies that $q^{(2l)^{2}}=q^{n}<\lambda (k/\lambda)^{2}<q^{6l}$, which is impossible.

\end{proof}

\begin{table}
	\centering
	\small
	\caption{Some parameters in Lemma~\ref{lem:exp-chp}.}\label{tbl:exp-chp}
	\begin{tabular}{lllllll}
		\hline\noalign{\smallskip}
		$L$ &
		$\u_{0}(L)$ &
		$N+l$  &
		$R_p(L)$ &
		$\u_{n}(L)$ &
		Comments \\
		\noalign{\smallskip}\hline\noalign{\smallskip}
		$\G_{2}(s)$ &
		$3$ &
		$8$ &
		$7$ &
		$19$ &
		$s=q^{t}$\\
		%
		%
		$\F_{4}(s)$ &
		$4$ &
		$28$ &
		$\geq 25$ &
		$60$ &
		$s=q^{t}$\\
		%
		%
		$\E_{6}(s)$ &
		$7$ &
		$42$  &
		$27$ &
		$91$ &
		$s=q^{t}$\\
		%
		%
		$\E_{6}^{-}(s)$ &
		$6$ &
		$42$  &
		$27$ &
		$90$ &
		$s=q^{t/2}$ or $q^{t}$
		\\
		%
		%
		$\E_{7}(s)$ &
		$7$ &
		$70$ &
		$56$ &
		$147$ &
		$s=q^{t}$\\
		%
		%
		$\E_{8}(s)$ &
		$9$ &
		$128$ &
		$248$ &
		$265$ &
		$s=q^{t}$\\
		${}^{2}\!\G_{2}(s)$ &
		${2}$ &
		$8$ &
		$7$ &
		$18$ &
		$s=q^{t}$\\
		%
		%
		${}^3\!\D_{4}(s)$ &
		$4$ &
		$16$ &
		$8$ &
		$36$ &
		$s=q^{t/3}$ or $q^{t}$
		\\
		%
		%
		\noalign{\smallskip}\hline\noalign{\smallskip}&
	\end{tabular}
\end{table}

\begin{lemma}\label{lem:exp-chp}
	The group $L$ cannot be a finite simple exceptional group of Lie type in characteristic $p$.
\end{lemma}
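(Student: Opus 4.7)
The plan is to mirror the strategy of Lemma~\ref{lem:class-chp}, using the bounds in Lemmas~\ref{lem:affine-elem} and~\ref{lem:lie-6.1} together with the data $(\u_{0}(L), N+l, R_{p}(L), \u_{n}(L))$ collected in Table~\ref{tbl:exp-chp}. Since $\lambda \neq p$ is an odd prime divisor of $(q-1)|\Aut(L)|$, Lemma~\ref{lem:affine-elem} gives $\lambda < s^{\u_{0}(L)} \leq q^{t\cdot \u_{0}(L)}$, which combined with $q^{n} < \lambda q^{2t(N+l)}$ from Lemma~\ref{lem:lie-6.1} yields
\[
n < t\bigl(\u_{0}(L) + 2(N+l)\bigr) = t \cdot \u_{n}(L).
\]
On the other hand $n \geq R_{p}(L)^{t}$, and so $R_{p}(L)^{t} < t\cdot \u_{n}(L)$. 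A row-by-row inspection of Table~\ref{tbl:exp-chp}, taking into account that $t$ is odd for $L$ of type ${}^{2}\E_{6}$ and that $3 \nmid t$ for $L$ of type ${}^{3}\D_{4}$, forces $t = 1$; in particular $s = q$ and none of the twisted groups $\E_{6}^{-}$ or ${}^{3}\D_{4}$ actually occurs over a proper subfield of $\Fbb_{q}$. The Suzuki and Ree--Tits groups ${}^{2}\B_{2}(s)$ and ${}^{2}\F_{4}(s)$ are excluded outright, as their defining characteristic is $2$, whereas $p$ is odd by Proposition~\ref{prop:lam-p}.

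With $t = 1$ in hand, for each exceptional type we now have $R_{p}(L) \leq n < \u_{n}(L)$, and the absolutely irreducible projective $\bar{\Fbb}_{p}L$-modules of such small dimensions can be enumerated from L\"ubeck's classification of low-dimensional representations of finite groups of Lie type (or from \cite{b:KL-90} for the classical reductions occurring in a few corners). This produces a short list of candidate pairs $(L, V)$, dominated by the minimal module of each type. For every such candidate the index $|L : \N_{L}(U)|$ with $U$ a Sylow $p$-subgroup of $L$ is an explicit product of cyclotomic factors, so combining the divisibility
\[
k \ \text{divides}\ \lambda (q-1)|L : \N_{L}(U)|
\]
from Lemma~\ref{lem:lie-6.1} with $k \mid \lambda (q^{n}-1)$ from Lemma~\ref{lem:six}(a) sharpens $k/\lambda$ into a divisor of $\gcd(q^{n}-1,(q-1)|L:\N_{L}(U)|)$, which is a short cyclotomic expression in $q$. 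Feeding this together with the prime-divisor bound $\lambda \leq \max\{(q-1)_{2'},|\Aut(L)|_{2'}\}$ into the inequality $q^{n} < \lambda(k/\lambda)^{2}$ from Lemma~\ref{lem:six}(c) restricts $q$ to a handful of small odd prime powers, which are eliminated one by one by testing the integrality of $k$ in the quadratic $k(k-1) = \lambda(q^{n}-1)$.

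The main obstacle is the handful of minimal modules for which the crude index $|L : \N_{L}(U)|$ is too coarse to close the numerics, namely the $7$-dimensional module for $\G_{2}(q)$, the $8$-dimensional module for ${}^{3}\D_{4}(q)$, the $26$- or $27$-dimensional module for $\F_{4}(q)$, the $27$-dimensional modules for $\E_{6}^{\pm}(q)$, the $56$-dimensional module for $\E_{7}(q)$, and the $248$-dimensional adjoint module for $\E_{8}(q)$. In each of these cases one must replace the generic bound by the exact length of the shortest non-trivial $L$-orbit on $V \setminus \{0\}$ (the orbit of a highest-weight vector or a minimal singular vector), which is a known cyclotomic polynomial in $q$ (e.g.\ $(q^{6}-1)(q+1)$ for the $7$-dimensional $\G_{2}(q)$-module and $(q^{8}+q^{4}+1)(q-1)$ for the $8$-dimensional ${}^{3}\D_{4}(q)$-module). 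Substituting this sharper divisor of $k/\lambda$ and invoking Lemma~\ref{lem:six}(c) together with Lemma~\ref{lem:diff} (to preclude $-1 \in G_{0}$ whenever it occurs in the relevant quasisimple group) reduces each case to a finite numerical check, which Lemma~\ref{lem:six}(a) then dispatches.
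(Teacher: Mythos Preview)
Your reduction to $t=1$ is not quite right. For $L={}^{3}\D_{4}(s)$ the inequality $R_{p}(L)^{t}<t\cdot\u_{n}(L)$ reads $8^{t}<36t$, which is satisfied for $t=2$ (since $64<72$) as well as for $t=1$. The paper catches this extra case and disposes of it separately: with $s=q^{2/3}$ one has $|L:\N_{L}(U)|=(s^{8}+s^{4}+1)(s^{3}+1)(s+1)<q^{26/3}$, whence $k/\lambda<q^{29/3}$ and $\lambda<q^{8/3}$, so Lemma~\ref{lem:six}(c) forces $n<22$, contradicting $n\geq R_{p}(L)^{2}=64$. You need to add this argument. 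Relatedly, your claim that ``$s=q$ and none of the twisted groups $\E_{6}^{-}$ or ${}^{3}\D_{4}$ actually occurs over a proper subfield of $\Fbb_{q}$'' is also wrong for $t=1$: case~(b) of Lemma~\ref{lem:lie-6.1} allows $s=q^{1/2}$ for ${}^{2}\E_{6}$ with $t=1$ (odd), and case~(c) allows $s=q^{1/3}$ for ${}^{3}\D_{4}$ with $t=1$. The paper explicitly carries these subfield possibilities through the analysis.

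Beyond these oversights, your outline diverges from the paper in how the $t=1$ case is handled. The paper does not enumerate modules from L\"ubeck's tables directly; instead it splits on whether $n<2(N+l)$ or $n\geq 2(N+l)$. In the former case it invokes \cite[Lemma~6.6]{a:Liebeck-98-Affine} to conclude that $V$ is quasiequivalent to one of the explicit modules $M(\mu)$ in \cite[Table~IV]{a:Liebeck-98-Affine}, and then follows the argument of \cite[Lemma~6.8]{a:Liebeck-98-Affine} (replacing $r$ there by $k/\lambda$) to reduce everything to the single case $L={}^{3}\D_{4}(q^{1/3})$, $n=8$, $k/\lambda\mid 2(q^{4}-1)$, which is dispatched via \eqref{eq:k-lam}. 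In the range $n\geq 2(N+l)$ the window $2(N+l)\leq n<\u_{n}(L)$ is extremely narrow (e.g.\ $32\leq n\leq 35$ for ${}^{3}\D_{4}$), and each surviving $n$ is killed by computing $\gcd(q^{n}-1,(q-1)|L:\N_{L}(U)|)$ directly. Your plan to compute shortest non-trivial orbit lengths on each minimal module would also work in principle, but it is more labour-intensive than citing Liebeck's existing reduction, and as written it remains a sketch rather than a proof.
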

\begin{proof}
	Let $L=L(s)$ be a finite simple exceptional group of Lie type with $s$ a power of $p$. Since $p$ is odd, $s$ is also odd, and so $L$ cannot be $^{2}\B_{2}(s)$ or $^{2}\F_{4}(s)$. Note by Lemma~\ref{lem:affine-elem} and Proposition \ref{prop:lam-p} that $\lambda\neq p$ divides $(q-1)\cdot |\Aut(L)|$. By the same argument as in the proof of Lemma \ref{lem:class-chp}, the inequality \eqref{eq:Chp-2} holds, and so  $R_{p}(L)^{t}\leq n<t \cdot [\u_{0}(L) + 2(N+l)]$, where the value of $R_{p}(L)$, $N+l$ and $\u_{0}(L)$ are given in Table \ref{tbl:exp-chp} for each group $L$. It is easy to observe that this inequality holds only when $t=1$, or $t=2$ and $L= {}^3\!\D_4(s)$ with $s=q^{2/3}$. In the latter case, let $U$ be a Sylow $p$-subgroup of $L$. Then $\N_{L}(U)$ is a Borel subgroup of $L$, and so $|L:\N_{L}(U)|=(s^{8}+s^{4}+1)(s^{3}+1)(s+1)<s^{13}=q^{26/3}$. By Lemma \ref{lem:lie-6.1}, we have that $k/\lambda\leq (q-1)|L:\N_{L}(U)|<  q^{29/3}$, and since $\lambda<s^{\u_{0}(L)}=q^{8/3}$, it follows from Lemma \ref{lem:six}(c) that $n<22$, which contradicts the fact that $n\geq R_{p}(L)^{t}=8^{2}$. Therefore, $t=1$, and hence for each $L$, by \eqref{eq:Chp-2}, we can find an upper bound $\u_{n}(L)$ of $n$ with $n<\u_{n}(L)$ as in the fifth column of Table \ref{tbl:exp-chp}. In this case, we have $s=q$ for all $L$, with extra possibilities that $s=q^{1/2}$ or $q^{1/3}$ when $L=\,^{2}\E_{6}(s)$ or $^{3}\D_{4}(s)$, respectively. If $n=\dim(V)<2(N+l)$, then by Lemma~6.6 in \cite{a:Liebeck-98-Affine}, we conclude that $V$ is quasiequivalent to one of the modules $M(\lambda')$ given in \cite[Table IV]{a:Liebeck-98-Affine} by replacing $\lambda$ with $\lambda'$ in this reference, and so following a similar argument given in the proof of \cite[Lemma 6.8]{a:Liebeck-98-Affine} by replacing $r$ with $k/\lambda$, we observe that $L=\,^{3}\D_{4}(q^{1/3})$ when $n=8$ and $k/\lambda$ divides $2(q^4-1)$.  Then there exists a positive integer $u$ such that $uk=2\lambda f(q)$, where $f(q)=2(q^4-1)$. Since $v-1=q^{8}-1$, by \eqref{eq:k-lam}, we have that
	\begin{align}\label{eq:quasi-excp-k-lam}
		4\lambda=u^2+\frac{2u^2+2u}{q^4-1}.
	\end{align}
	This implies that $u^2\leq 4\lambda$, and since $\lambda<s^4$, it follows that $u^2<4q^{4/3}$. Moreover, \eqref{eq:quasi-excp-k-lam} implies that $q^4-1$ divides $2u^2+2u$, and since $u^2<4q^{4/3}$, we conclude that $q^4-1<4q^{4/3}$, which is impossible. Therefore, $n\geq 2(N+l)$, and so by \eqref{eq:Chp-2}, we have that $2(N+l)\leq n<\u_n(L)$, where $\u_n(L)$ is recorded in the fifth column of Table \ref{tbl:exp-chp} for each group $L$. Suppose that $L=\,^{3}\D_{4}(s)$ with $s=q$ or $q^{1/3}$. Since $32=2(N+l)\leq n<\u_n(L)=36$, we have that $n=32$, $33$, $34$ or $35$. Let $s=q$. For each $n$, we know by Lemma \ref{lem:affine-elem} that $k/\lambda$ divides $3a\cdot\gcd(q^n-1,(q^8+q^4+1)(q^6-1)(q^2-1)(q-1))$, where $q=p^a$, and so $k/\lambda$ is less than $3aq^{9}$, $3aq^{8}$, $3aq^{7}$ or $3aq^{4}$ respectively when  $n=32$, $33$, $34$ or $35$, but this violates the fact that $\lambda v<k^2$. By a similar argument, we observe that $L$ cannot be one of the remaining groups.
\end{proof}

\subsubsection{Lie type groups in cross-characteristic}

Suppose that $L=L(s)$ is a simple group of Lie type in characteristic $p'$, and is not isomorphic to an alternating group. Then Lemma~\ref{lem:diff} implies that
\begin{equation}\label{eq:Lietype-p'symm-n}
	\lambda\cdot q^{n}<(q-1)_{2'}^2\cdot |\Aut(L)|^2.
\end{equation}
If $R_{p'}(L)$ is the smallest degree of a faithful projective representation of $L$ over a field of characteristic $p'$, then $n\geq R_{p'}(L)$, and hence
\begin{align}\label{eq:Lietype-p'symm}
	\lambda \cdot q^{R_{p'}(L)}< (q-1)_{2'}^2\cdot|\Aut(L)|^2.
\end{align}
In what follows, we frequently use the lower bounds for $R_{p'}(L)$ given by \cite[Theorem 5.3.9]{b:KL-90} and they are recorded in \cite[Table 5.3.A]{b:KL-90}.

\begin{lemma}\label{lem:class-cross}
	$L$ cannot be a classical simple group of Lie type in characteristic $p'$.
\end{lemma}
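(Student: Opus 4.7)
My plan is to treat each of the four classical families of $L=L(s)$---linear $\PSL_n(s)$, unitary $\PSU_n(s)$, symplectic $\PSp_{2n}(s)$, and orthogonal $\POm^{\e}_n(s)$---in turn. The driving inequality is \eqref{eq:Lietype-p'symm}, which after substituting the Landazuri--Seitz lower bound for $R_{p'}(L)$ from \cite[Table~5.3.A]{b:KL-90} and the explicit order of $\Aut(L)$ becomes a polynomial inequality in $q$ and $s$. Since $R_{p'}(L)$ grows at least like a power of $s$ whose exponent is the Lie rank, while $|\Aut(L)|$ is polynomial of fixed degree in $s$, the term $q^{R_{p'}(L)}$ on the left dominates the right-hand side for large $s$, forcing either the rank to be small, $s$ to be bounded, or $q$ to be close to $3$.

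For each family I will derive a short list of surviving triples $(L,s,q)$ from this inequality. In most subcases, including all groups of Lie rank exceeding a small explicit constant, the list is empty. For the remaining candidates I then apply the flag-transitivity restrictions of Lemma~\ref{lem:six}: the divisibility $k(k-1)=\lambda(v-1)$ with $\lambda$ an odd prime, the Fisher-type inequality $\lambda v<k^2$, and the divisibility $k \mid \lambda\gcd(q^n-1,(q-1)|\Aut(L)|)$ of Lemma~\ref{lem:affine-elem}(a). In many cases these force $k$ to fail to be an integer or to violate the quadratic condition of Lemma~\ref{lem:six}(b). When the numerical constraints still leave a gap, Lemma~\ref{lem:diff} supplies the decisive obstruction: since $\lambda$ is odd, the underlying difference set admits no multiplier $-1$, which rules out exactly those representations on which $Z(H^{(\infty)})$ contains the scalar $-1$; this observation also recovers the already-settled linear, symplectic, and specific unitary/orthogonal cases treated in Lemma~\ref{lem:C8} without overlap.

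The main obstacle I foresee is the low-rank case $L=\PSL_2(s)$, where the Landazuri--Seitz bound $R_{p'}(L)\geq (s-1)/2$ is weak and the small faithful projective representations of dimensions $s$, $(s\pm 1)/2$, and $s+1$ must each be analysed separately using the modular character tables in \cite{b:Atlas,b:Atlas-Brauer}. A secondary difficulty arises for $\PSL_3(s)$, $\PSU_3(s)$, $\PSp_4(s)$ and the triality-related representations of $\POm^{+}_8(s)$, where both the exceptional low-dimensional modules and the enlarged outer automorphism groups require case-by-case inspection. The argument pattern in each of these finite residual cases will follow the template already used in Lemmas~\ref{lem:affine-Alt}--\ref{lem:exp-chp}: bound $q$ from $\lambda v<k^2$, enumerate the possible $(v,k,\lambda)$ satisfying parts (a)--(b) of Lemma~\ref{lem:six}, and discard them via Lemma~\ref{lem:affine-elem} or direct computation in \textsf{GAP}.
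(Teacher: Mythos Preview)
Your plan is correct and follows essentially the same route as the paper: substitute the Landazuri--Seitz bounds from \cite[Table~5.3.A]{b:KL-90} into \eqref{eq:Lietype-p'symm}, reduce each classical family to a finite list of $(L,s,n,q)$, and eliminate the survivors via Lemma~\ref{lem:six}(a)--(b) together with Lemma~\ref{lem:affine-elem} and, where relevant, Lemma~\ref{lem:diff}. One small correction: your appeal to Lemma~\ref{lem:C8} is misplaced, since that lemma concerns the case where $G_0$ contains a classical group in its natural characteristic-$p$ module, not the cross-characteristic quasisimple situation treated here; the role you describe for it is already played by Lemma~\ref{lem:diff} alone.
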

\begin{proof}
	Suppose first that $L=\PSL_2(s)$ with $s\geq 4$. As we are assuming that $L$ is not isomorphic to an alternating group, $s\neq 4,5$ or $9$.
	Then by \eqref{eq:Lietype-p'symm} and \cite[Theorem 5.3.9]{b:KL-90}, we conclude that
	\begin{align}\label{eq:quasi-L2i}
		q^{(s-1)/\gcd(2,s-1)}< \log_{p'}^{2}(s)\cdot \frac{q^2}{4}\cdot s^2\cdot(s^2-1)^2.
	\end{align}
	Since $\log_{p'}^{2}(s)\leq 2s$, it follows that  $2q^{(s-1)/\gcd(2,s-1)}< q^2\cdot s^3\cdot(s^2-1)^2$, and so by taking logarithm to base $q$, we get
	\begin{align*}
		\frac{s-1}{\gcd(2,s-1)}-2<7\log_qs \leq 7\log_3s
	\end{align*}
	since $q$ is odd. Straightforward computation shows that the last inequality holds only when $s\leq 53$. Then for these possible values of $(s,n)$, we observe that \eqref{eq:Lietype-p'symm} holds only for $q$ as in Table~\ref{tbl:cross-psl2}. This table also contains the admissible $\lambda$ which is indeed a prime divisor of $(q-1) | \Aut(L)|$ greater than $3$ for each $q$. Now, filtering the triples $(n,q,\lambda)$ with respect to Lemma \ref{lem:six}(b), one see that only possibility is $L\cong \PSL_2(7)$ and $(n,q,\lambda)=(3,25,3),(3,61,7)$. However, both cases are excluded by Proposition~\ref{prop:lam-p}.

	\begin{table}
		\centering
		\caption{Some parameters for $L=\PSL_{2}(s)$ in Lemma~\ref{lem:class-cross}.}\label{tbl:cross-psl2}
		\begin{tabular}{llllllllllllll}
			\noalign{\smallskip}
			\cline{1-4} \cline{6-9} \cline{11-14}
			\noalign{\smallskip}
			$s$ & $n$   & $q$          & $\lambda$ &  & $s$  & $n$        & $q$       & $\lambda$ &  & $s$  & $n$  & $q$ & $\lambda$ \\
			\noalign{\smallskip}
			\cline{1-4} \cline{6-9} \cline{11-14}
			\noalign{\smallskip}
			$7$ & $3$   & $\leq 28224$ & -         &  & $11$ & $5$        & $\leq 71$ & -         &  & $19$ & $9$  & $3$ & $5,19$    \\
			$7$ & $6$   & $3$          & $7$       &  & $11$ & $10,11,12$ & $3$       & $5,11$    &  & $19$ & $9$  & $5$ & $19$      \\
			$7$ & $6$   & $5$          & $7$       &  & $13$ & $7$        & $3$       & $7,13$    &  & $19$ & $9$  & $7$ & $5,19$    \\
			$7$ & $6$   & $9$          & $7$       &  & $13$ & $7$        & $5$       & $7,13$    &  & $23$ & $11$ & $3$ & $11,23$   \\
			$7$ & $6$   & $11$         & $5,7$     &  & $13$ & $7$        & $7$       & $13$      &  & $23$ & $11$ & $5$ & $11,23$   \\
			$7$ & $7,8$ & $3$          & $7$       &  & $13$ & $7$        & $9$       & $7,13$    &  & $25$ & $13$ & $3$ & $5,13$    \\
			$7$ & $7,8$ & $5$          & $7$       &  & $13$ & $7$        & $11$      & $7,13$    &  & $27$ & $13$ & $3$ & $7,13$    \\
			$8$ & $7$   & $3$          & $7$       &  & $13$ & $12,13,14$ & $3$       & $7,13$    &  & $27$ & $13$ & $5$ & $7,13$    \\
			$8$ & $7$   & $5$          & $7$       &  & $16$ & $15,16,17$ & $3$       & $5,17$    &  & $29$ & $15$ & $3$ & $5,7,29$  \\
			$8$ & $7$   & $9$          & $7$       &  & $17$ & $9$        & $3$       & $17$      &  & $31$ & $15$ & $3$ & $11,31$   \\
			$8$ & $7$   & $11$         & $5,7$     &  & $17$ & $9$        & $5$       & $17$      &  & $37$ & $19$ & $3$ & $19,37$   \\
			$8$ & $9$   & $3$          & $7$       &  & $17$ & $9$        & $7$       & $17$      &  & $41$ & $21$ & $3$ & $5,7,41$  \\
			$8$ & $7$   & $5$          & $7$       &  & $17$ & $14$       & $3$       & $17$      &  & $43$ & $21$ & $3$ & $7,11,43$ \\
			\noalign{\smallskip}
			\cline{1-4} \cline{6-9} \cline{11-14}
			\noalign{\smallskip}
		\end{tabular}%
	\end{table}

	Suppose that $L=\PSL_m(s)$ with $m\geq 3$, $(m, s)\neq (4,2), (3,2)$. If $(m,s)\neq (3,4)$, then by \eqref{eq:Lietype-p'symm} and \cite[Theorem 5.3.9]{b:KL-90}, we conclude that
	\begin{align}\label{eq:quasi-Luis-ymm}
		(q-1)_{2}^{2}\cdot q^{s^{m-1}-3}<\log_{p'}^{2}(s)\cdot s^{m(m-1)}\cdot \prod_{i=2}^{m}(s^i-1)^2.
	\end{align}
	This together with facts that $\log_{p'}^{2}(s)\leq 2s$ and $\prod_{i=2}^{m}(s^i-1)^2<s^{m(m+1)-2}$ implies that $s^{m-1}-3<(2m^2-1)\cdot \log_q(s)$. This forces $s^{m-1}-3<(2m^2-1)s$. Easy computation shows that the last inequality holds only for pairs $(m,s)$ as in below:
	\begin{align*}
		\begin{array}{llll}
			m=3, & \quad 3 \leq s\leq 19;\\
			m=4, & \quad s=3,4, 5;\\
			m=5,6,7,8, & \quad s= 2.
		\end{array}
	\end{align*}
	It is easy to see that none of the values of $(m,s)$ as above fulfills \eqref{eq:quasi-Luis-ymm} with $n$ as in \cite{b:Atlas,b:Atlas-Brauer} in the role of $s^{m-1}-1$, and hence they are all excluded.
	If $(m,s)=(3,4)$, then $(q-1)_{2}^{2} \cdot q^{n-2}\leq 2^{16}\cdot 3^{6}\cdot 5^{2}\cdot 7^2$, where $n\geq 15$ by \cite{b:Atlas, b:Atlas-Brauer} since $-1 \notin G_{0}$ by Lemma \ref{lem:diff}.  Then the last inequality holds only for $q=3,5$ and $n\leq 21$, and so $H^{(\infty)} \cong L_{3}(4)$ and $(q,n)=(3,15),(3,19),(5,20)$ by \cite{b:Atlas-Brauer}. Since $\lambda\neq p$ is a prime divisor of $(q-1) |\Aut(L)|$, it follows that $\lambda =5,7$ in the first two cases and $\lambda =7$ in the remaining ones. However, all these cases violate Lemma \ref{lem:six}(b), and hence they are also excluded.

	Suppose that $L=\PSU_m(s)$ with $m\geq 3$ with $(m,s)\neq (3,2)$. Assume first that $(m,s)\neq (4,2)$ and $(4,3)$. Then, by \eqref{eq:Lietype-p'symm} and \cite[Theorem 5.3.9]{b:KL-90}, we conclude that
	\begin{align}\label{eq:quasi-Uui-symm}
		(q-1)_{2}^2\cdot q^{(s^{m}-s^{t})/(s+1)-2}<4\log_{p'}^{2}(s)\cdot  s^{m(m-1)}\cdot \prod_{i=2}^{m}(s^i-(-1)^i)^2,
	\end{align}
	where $t=\gcd(m,2)-1$. Since $\log_{p'}^{2}(s)\leq 2s$ and $\prod_{i=1}^{m}(s^i-(-1)^i)^2<s^{m^2+m-2}$,  we conclude that $s^{m}-s^{t}-2s-2<\log_q(2)+(2m^2-1)\cdot(s+1)\cdot \log_q(s)$. Since $\log_q(2)\leq 2/3$ and $\log_q(s)\leq s/2$, we have that $s^{m}-s^{t}-2s-8/3<(2m^2-1)(s+1)s/2$, and hence this inequality holds for pairs $(m,s)$ as in below:
	\begin{align*}
		\begin{array}{llll}
			m=3, & \quad s=3,4,5,7,8, 9;\\
			m=4, & \quad s= 4; \\
			m=5, & \quad s=2, 3;\\
			m=6,7,8, & \quad s= 2.
		\end{array}
	\end{align*}
	For these values of $(m,s)$, and the value of $n$ provided in \cite{b:Atlas,b:Atlas-Brauer,a:Hiss2002} in the role of $(s^{m}-s^{t})/(s+1)$, using we now employ \eqref{eq:quasi-Uui-symm}, we can find the possible values of $n$ and $q$. Further, for each such pair $(n,q)$, we determine the corresponding admissible values of $\lambda\neq p$ among the prime divisors of $(q-1)|\Aut(L)|$ greater than $3$. Therefore, we obtain the possibilities listed in Table~\ref{tbl:cross-psu}.  However, all of them violate Lemma \ref{lem:six}(b), and hence they are excluded.
	
	\begin{table}
		\centering
		\small
		\caption{Some parameters for $L=\PSU_{m}(s)$ in Lemma~\ref{lem:class-cross}.}\label{tbl:cross-psu}
		\begin{tabular}{llllllllllllll}
			\noalign{\smallskip}\cline{1-5}\cline{7-11}\noalign{\smallskip}
			$m$ & $s$ & $n$ & $q$ & $\lambda$  & &  $m$ & $s$ & $n$ & $q$ &  $\lambda$\\
			\noalign{\smallskip}\cline{1-5}\cline{7-11}\noalign{\smallskip}
			$3$ & $3$ & $6$     & $\leq71$  & $5,7,11,13,23,29$ &  & $5$ & $2$ & $20$    & $5$  & $11$        \\
			$3$ & $3$ & $6,7$   & $\leq 31$ & $5,7,11,13$       &  & $6$ & $2$ & $21,22$ & $11$ & $5$         \\
			$3$ & $4$ & $12,13$ & $7$       & $5,13$            &  & $6$ & $2$ & $21,22$ & $9$  & $5,7,11$    \\
			$3$ & $4$ & $12,13$ & $5$       & $13$              &  & $6$ & $2$ & $21,22$ & $7$  & $5,11$      \\
			$3$ & $4$ & $12,13$ & $3$       & $5,13$            &  & $6$ & $2$ & $21,22$ & $5$  & $7,11$      \\
			$3$ & $5$ & $20,21$ & $3$       & $5,7$             &  & $6$ & $2$ & $21,22$ & $3$  & $5,7,11$    \\
			$5$ & $2$ & $10$    & $\leq 59$ & $5,7,11,13,23,29$ &  & $7$ & $2$ & $42$    & $3$  & $5,7,11,43$ \\
			$5$ & $2$ & $11$    & $\leq 19$ & $5,11$            &  & $7$ & $2$ & $42,43$ & $5$  & $7,11,43$   \\
			$5$ & $2$ & $20$    & $3$       & $5,11$            &  &     &     &         &      &             \\
			\noalign{\smallskip}\cline{1-5}\cline{7-11}\noalign{\smallskip}
		\end{tabular}
	\end{table}
	
	If $(m,s)=(4,2)$ or $(4,3)$, then \eqref{eq:Lietype-p'symm} implies that $(q-1)_{2}^{2}\cdot q^{n-2}\leq 2^{14}\cdot 3^{8}\cdot 5^{2}$ or $(q-1)_{2}^{2}\cdot q^{n-2}\leq 2^{20}\cdot 3^{12}\cdot 5^2\cdot 7^2$, respectively, where $n$ is as in \cite{b:Atlas,b:Atlas-Brauer}.  We then observe that the last inequalities hold only for pairs $(n,q)$ as in Table~\ref{tbl:cross-psu42}. Further, for each such pair $(n,q)$, we can determine the corresponding admissible values of $\lambda$ as recorded in the same table by  considering the fact that $\lambda\neq p$ is a prime divisor of $(q-1)|\Aut(L)|$ greater than $3$. Again, none of these possibilities occurs by Lemma \ref{lem:six}(b).
	
	\begin{table}
		\centering
		\small
		\caption{Some parameters for $L=\PSU_{4}(s)$ with $s=2,3$ in Lemma~\ref{lem:class-cross}.}\label{tbl:cross-psu42}
		\begin{tabular}{llllllllllllll}
			\noalign{\smallskip}\cline{1-5}\cline{7-11}\noalign{\smallskip}
			$m$ &  $s$ & $n$ & $q$ & $\lambda$  &&  $m$ & $s$ & $n$ & $q$ &  $\lambda$\\
			\noalign{\smallskip}\cline{1-5}\cline{7-11}\noalign{\smallskip}
			$4$ & $2$ & $5$  & $\leq 503$ & $\leq 251$  &&  $4$ & $3$ & $15$ & $3$ &  $5,7$ \\
			$4$ & $2$ & $6$  & $\leq 107$ & $\leq 53$  &&   $4$ & $3$  &  $15$     & $7$   &   $5$ \\
			$4$ & $2$  & $10$  & $3,7$     & $5$  &&        $4$ & $3$     &  $21$ & $3$ &  $5,7$ \\
			$4$ & $2$  & $14$  & $3$       & $5$  &&      &     &      &      &      \\
			\noalign{\smallskip}\cline{1-5}\cline{7-11}\noalign{\smallskip}
		\end{tabular}
	\end{table}
	
	Suppose that $L=\PSp_{2m}(s)'$ with $m\geq 2$. The case where $\PSp_4(2)'\cong \A_{6}$ is excluded by Lemma \ref{lem:affine-Alt}, and $\PSp_4(3)\cong \PSU_4(2)$, and so we may assume that $L=\PSp_{2m}(s)$ with $(m,s)\neq (2,2),(2,3)$. Then by \eqref{eq:Lietype-p'symm}, we have that
	\begin{align}\label{eq:quasi-Spi}
		(q-1)_{2}^{2}\cdot q^{R_{p'}(L)-2}<\log_{p'}(s)^2\cdot s^{2m^2}\cdot \prod_{i=1}^{m}(s^{2i}-1)^2.
	\end{align}
	Since $\log_{p'}(s)^2\leq 2s$ and $\prod_{i=1}^{m}(s^{2i}-1)^2<s^{2m^2+2m}$, we conclude that  $R_{p'}(L)-2<(4m^2+2m+1)\cdot \log_q(s)$ with $\log_q(s) \leq s/2$ as $q$ is odd. Therefore,
	\begin{equation*}
		2R_{p'}(L)-4<(4m^2+2m+1)s.
	\end{equation*}
	Now, a lower bound for $2R_{p'}(L)$ is either $s^{m}-1$ or $s(s^{m}-1)(s^{m-1}-1)/(s+1)$ according as $s$ is odd or even, respectively, by \cite[Theorem 5.3.9]{b:KL-90} and \cite[Table 1]{a:SZ}. Thus $s^{m}-5<(4m^2+2m+1)s$ for $s$ odd, and $(s^{m}-1)(s^{m-1}-1)<(4m^2+2m+1)(s+1)$ for $s$ even. Hence, we obtain pairs $(m,s)$ as below:
	\begin{align*}
		\begin{array}{llll}
			m=2, & \quad s=4,5,7,11,13,17,19;\\
			m=3, & \quad s=2,3,5;\\
			m=4, & \quad s=2,3;\\
			m=5, & \quad s=3.
		\end{array}
	\end{align*}
	For these values of $(m,s)$, we use \eqref{eq:quasi-Spi} with $n$ provided in \cite{b:Atlas,b:Atlas-Brauer,a:Hiss2002} in the role of a lower bound for $R_{p^{\prime}(L)}$ given above, and we can find the possible values of $n$ and $q=p^a$, for which, we can determine the corresponding possible values of $\lambda\neq p$ dividing $(q-1) |\Aut(L)|$. This gives the possibilities recorded in Table~\ref{tbl:cross-psp}, however, none of these cases can occur by Lemma \ref{lem:six}(b).
	
	\begin{table}
		\centering
		\small
		\caption{Some parameters for $L=\PSp_{2m}(s)$ in Lemma~\ref{lem:class-cross}.}\label{tbl:cross-psp}
		\begin{tabular}{llllllllllllll}
			\noalign{\smallskip}
			\cline{1-5}\cline{7-11}
			\noalign{\smallskip}
			$m$ & $s$ & $n$ & $q$ & $\lambda$  &&  $m$ & $s$ & $n$ & $q$ &  $\lambda$\\
			\noalign{\smallskip}
			\cline{1-5}\cline{7-11}
			\noalign{\smallskip}
			$2$ & $4$ & $18$ & $3$        & $5,17$     &  & $3$ & $2$ & $15$    & $7$       & $5$            \\
			$2$ & $4$ & $18$ & $5$        & $17$       &  & $3$ & $2$ & $21,27$ & $3$       & $5,7$          \\
			$2$ & $5$ & $13$ & $\leq 19$  & $5,13$     &  & $3$ & $3$ & $13$    & $\leq 49$ & $5,7,11,13,23$ \\
			$2$ & $7$ & $25$ & $3$        & $5,7$      &  & $3$ & $5$ & $62$    & $3$       & $5,7,13$       \\
			$2$ & $7$ & $25$ & $5$        & $7$        &  & $8$ & $2$ & $35$    & $3$       & $5,7,17$       \\
			$3$ & $2$ & $7$  & $\leq 283$ & $\leq 131$ &  & $8$ & $2$ & $35$    & $5$       & $7,17$         \\
			$3$ & $2$ & $14$ & $3$        & $5,7$      &  & $8$ & $3$ & $41$    & $5$       & $7,17,41$      \\
			$3$ & $2$ & $15$ & $5$        & $7$        &  & $8$ & $3$ & $41$    & $5$       & $7,17$         \\
			\noalign{\smallskip}\cline{1-5}\cline{7-11}\noalign{\smallskip}
		\end{tabular}
	\end{table}
	
	Suppose that $L=\Omega_{2m+1}(s)$ with $s$ odd. Note that $\POm_{5}(s)\cong\PSp_4(s)$ and $\POm_{3}(s)\cong \PSL_2(s)$. Then we can assume that $m\geq 3$. If $(m,s)\neq (3,3)$, then by \eqref{eq:Lietype-p'symm} and \cite[Theorem 5.3.9]{b:KL-90}, we have that
	\begin{align*}
		(q-1)^{2}_{2}\cdot q^{s^{m-1}(s^{m-1}-1)-2}<\log_{p'}(s)^2\cdot s^{2m^2}\cdot \prod_{i=1}^{m}(s^{2i}-1)^2.
	\end{align*}
	Note that $\log_{p'}(s)^2\leq s^2$ and $\prod_{i=1}^{m}(s^{2i}-1)^2<s^{2m^2+2m}$, Then  $(s^{m-1}(s^{m-1}-1)-2)\cdot \log_2(q)<(4m^2+2m+2)\cdot \log_2(s)$. Since $\log_2(s)\leq 2s/3$, we conclude that
	\begin{align*}
		9s^{2m-2}-9s^{m-1}-18<16m^2s+8ms+8s,
	\end{align*}
	but we cannot find any pairs $(m,s)$ satisfying this inequality, and hence this case is ruled out. We now consider the case where $(m, s)=(3, 3)$. Then by \eqref{eq:Lietype-p'symm}, we get $(q-1)^{2}_{2}\cdot q^{n-2}<2^{20}\cdot 3^{18}\cdot 5 \cdot 7^2\cdot 13^2$, where either $n=27$, or $n\geq 78$ by \cite[Table 2]{a:Hiss2002}. It is easy to check that the last inequality holds only for pairs $(n,q)=(27,5)$, for which $\lambda=7,17$ as $\lambda\neq p$ is a prime divisor of $(q-1) |\Aut(L)|$. However, this case cannot also occur as these values  violate Lemma \ref{lem:six}(b).
	
	Suppose that $L=\POm_{2m}^{+}(s)$ with $m\geq 4$. If $(m,s)\neq (4,2)$, then \eqref{eq:Lietype-p'symm} and \cite[Theorem 5.3.9]{b:KL-90} imply that
	\begin{align*}
		(q-1)_{2}^{2}\cdot q^{s^{m-2}(s^{m-1}-1)-2}<8\log_{p'}(s)^2\cdot s^{2m(m-1)}(s^m-1)^2\cdot \prod_{i=1}^{m-1}(s^{2i}-1)^2.
	\end{align*}
	This together with facts that $8\log_{p'}(s)^2\leq s^3$ and $\prod_{i=1}^{m-1}(s^{2i}-1)^2<s^{2m^2-2m}$ implies that $[s^{m-2}(s^{m-1}-1)-2]\cdot \log_2(q)<(4m^2-2m+3)\cdot \log_2(s)$, and so
	\begin{align*}
		9s^{2m-3}-9s^{m-2}-18<16m^2s-8ms+12s.
	\end{align*}
	This inequality does not hold for any pairs $(m,s)$, which is a contradiction. Now if $(m,s)=(4,2)$, then by \eqref{eq:Lietype-p'symm}, we get $(q-1)_{2}^{2}\cdot q^{n-2}<2^{26}\cdot 3^{10}\cdot 5^4\cdot 7^2$, where $n=28,25$ or $n\geq 50$ by \cite[Table 2]{a:Hiss2002} and Lemma \ref{lem:diff}. Then, the last inequality forces $n=28,25$ and $q=3$, and so we find $\lambda=5,7$ since $\lambda\neq p$ is a prime divisor of $(q-1) |\Aut(L)|$ which is greater than $3$. These possibilities are also excluded by Lemma \ref{lem:six}(b).
	
	Suppose finally that $L=\POm_{2m}^{-}(s)$ with $m\geq 4$. In this case,  \eqref{eq:Lietype-p'symm} and \cite[Theorem 5.3.9]{b:KL-90} implies that
	\begin{align*}
		(q-1)_{2}^{2}\cdot q^{(s^{m-2}-1)(s^{m-1}+1)-2}<\log_{p'}(s)^2\cdot s^{2m(m-1)}(s^m+1)^2\cdot \prod_{i=1}^{m-1}(s^{2i}-1)^2.
	\end{align*}
	As $4\log_{p'}(s)^2\leq s^4$ and $\prod_{i=1}^{m-1}(s^{2i}-1)^2<s^{2m^2-2m}$, it follows that  $[(s^{m-2}-1)(s^{m-1}+1)-2]\cdot \log_2(q)<(4m^2-2m+4)\cdot \log_2(s)$, and hence
	\begin{align*}
		9(s^{m-2}-1)(s^{m-1}+1)-18<16m^2s-8ms+16s,
	\end{align*}
	however, it is easy to observe that this inequality does not hold for any pairs $(m,s)$, which is a contradiction.
\end{proof}

\begin{lemma}\label{lem:exp-cross}
	The group $L$ cannot be a finite simple exceptional group of Lie type in characteristic $p'$.
\end{lemma}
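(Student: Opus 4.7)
The plan is to mirror the approach of Lemma~\ref{lem:class-cross}, combining the dimension inequality \eqref{eq:Lietype-p'symm} with the lower bounds for the minimal cross-characteristic representation degree $R_{p'}(L)$ collected in \cite[Table~5.3.A]{b:KL-90}. Because $p$ is odd while $s$ is coprime to $p$, the groups that must be eliminated are $\G_{2}(s)$, $\F_{4}(s)$, $\E_{6}(s)$, $\E_{6}^{-}(s)$, $\E_{7}(s)$, $\E_{8}(s)$, $^{3}\!\D_{4}(s)$, $^{2}\!\G_{2}(s)$ with $s=3^{2m+1}\geq 27$, $^{2}\!\B_{2}(s)$ with $s=2^{2m+1}\geq 8$, and $^{2}\!\F_{4}(s)$ with $s=2^{2m+1}\geq 8$; the Tits group $^{2}\!\F_{4}(2)'$ has already been disposed of in Lemma~\ref{lem:sporadic}.

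First I would substitute each of these $R_{p'}(L)$ bounds into \eqref{eq:Lietype-p'symm} and use the crude estimates $\log_{p'}^{2}(s)\leq 2s$ and $|\Aut(L)|\leq \log_{p'}(s)\cdot|L|$ to reduce to a polynomial inequality purely in $s$, taking logarithms to base $q$ and exploiting $q\geq 3$ to absorb the $q$-dependence. Since the untwisted types $\E_{7}$, $\E_{8}$, $\F_{4}$, $\E_{6}$ have representation degrees that grow rapidly with $s$, these inequalities collapse almost at once, leaving only a handful of small $s$ to inspect. For $\G_{2}$, $^{3}\!\D_{4}$, $^{2}\!\G_{2}$, $^{2}\!\B_{2}$, and $^{2}\!\F_{4}$ the growth is slower, so a longer but still finite list of small $s$ survives.

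Next, for each surviving pair $(L,s)$ I would read the exact minimal cross-characteristic representation dimensions $n$ from \cite{b:Atlas,b:Atlas-Brauer,a:Hiss2002}, substitute into \eqref{eq:Lietype-p'symm-n} to bound $q=p^{a}$, and for each admissible $(n,q)$ enumerate the candidate primes $\lambda\neq p$ that divide $(q-1)|\Aut(L)|$ and exceed $3$, as forced by Lemma~\ref{lem:affine-elem} together with Proposition~\ref{prop:lam-p}. Each resulting triple $(v,k,\lambda)=(q^{n},k,\lambda)$ is then tested against the square condition of Lemma~\ref{lem:six}(b) and the bound $\lambda v<k^{2}$ of Lemma~\ref{lem:six}(c), in direct analogy with Tables~\ref{tbl:cross-psl2}--\ref{tbl:cross-psp} in the classical case; Lemma~\ref{lem:diff} forbidding $-1\in G_{0}$ will eliminate several further candidates immediately.

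The step I expect to be the main obstacle is the small-$s$ analysis for $\G_{2}(s)$ and $^{3}\!\D_{4}(s)$: their faithful cross-characteristic modules have comparatively modest minimal dimension (essentially $7$ and $8$ in most characteristics), so the initial bound admits many pairs $(n,q)$ each demanding individual scrutiny and a case-by-case application of Lemma~\ref{lem:six}(b). A secondary technicality concerns $\E_{6}^{-}(s)$, where one must be careful that the natural field of definition is $\Fbb_{s^{2}}$ rather than $\Fbb_{s}$ and read the bounds in \cite[Table~5.3.A]{b:KL-90} accordingly. Modulo these bookkeeping points, I expect every branch to terminate with the triple failing either the square condition of Lemma~\ref{lem:six}(b) or the orbit-length condition of Lemma~\ref{lem:six}(d), yielding the claimed non-existence.
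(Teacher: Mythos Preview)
Your plan is essentially the paper's own approach: plug the Landazuri--Seitz bounds from \cite[Table~5.3.A]{b:KL-90} into \eqref{eq:Lietype-p'symm}, eliminate most families outright, and finish the survivors by reading exact dimensions from \cite{b:Atlas-Brauer,a:Hiss2002} and testing against Lemma~\ref{lem:six}(b).

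One correction to your expectations: you have the ``hard'' cases misidentified. In cross characteristic the minimal degrees for $\G_{2}(s)$ and $^{3}\!\D_{4}(s)$ are not $7$ and $8$ --- those are the defining-characteristic dimensions --- but rather roughly $s(s^{2}-1)$ and $s^{3}(s^{2}-1)$, so $^{3}\!\D_{4}$ (along with $^{2}\!\B_{2}$, $^{2}\!\G_{2}$, $\E_{6}^{\pm}$, $\E_{7}$, $\E_{8}$) dies immediately from \eqref{eq:Lietype-p'symm}. The groups that actually require a small-$s$ case analysis in the paper are $\F_{4}(s)$, $^{2}\!\F_{4}(s)$, and $\G_{2}(s)$; only $\G_{2}(3)$ produces genuine candidate triples $(n,q,\lambda)$, all of which then fail Lemma~\ref{lem:six}(b).
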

\begin{proof}
	Suppose that $L=L(s)$ is a finite simple exceptional group of Lie type in characteristic $p'$. We first note that the case where $L$ is $\G_2(2)'\cong \PSU_{3}(3)$ or ${}^2\G_{2}^{'}(3) \cong \PSL_{2}(8)$ has already been ruled out in Lemma \ref{lem:class-cross}, and $L$ cannot be the Tits group  $^{2}\F_{4}(2)'$ by Lemma \ref{lem:sporadic}. Note also that the inequalities \eqref{eq:Lietype-p'symm-n} and \eqref{eq:Lietype-p'symm} hold. Therefore, by considering the lower bounds $R_{p'}(L)$ recorded in \cite[Table~5.3A]{b:KL-90}, we conclude that $L$ cannot be of type $^{2}\B_{2}$, ${}^2\G_2$, ${}^3\D_{4}$, ${}^2\E_6$, $\E_6$, $\E_7$ or $\E_{8}$.
	In what follows, we consider the remaining cases, namely $L$ is of one of types $\F_{4}$, $^{2}\F_{2}$ and $\G_{2}$.
	
	Suppose that $L=\F_4(s)$. If $s$ is odd, then by \eqref{eq:Lietype-p'symm} and \cite[Theorem 5.3.9]{b:KL-90}, we have $q^{s^{6}(s^2-1)-2}<\log_{p'}(s)^2\cdot s^{108}$. Since $\log_{p'}(s)^2\leq s$, it follows that $[s^{6}(s^2-1)-2]\cdot \log_2(q)<109\cdot \log_2(s)$, and so  $s^{6}(s^2-1)<111$, which is impossible for $s\geq 3$. If $s\neq 2$ is even, then by \eqref{eq:Lietype-p'symm} and \cite[Theorem 5.3.9]{b:KL-90}, we must have  $q^{\frac{1}{2}s^{7}(s^3-1)(s-1)-2}<\log_{p'}(s)^2\cdot s^{108}$. Note that $\log_{p'}(s)^2\leq s^2$. Then  $[s^{7}(s^3-1)-2]\cdot \log_2(q)<110\cdot \log_2(s)$, and so $s^{6}(s^3-1)<112$, which is impossible as $s\geq 4$. If $s=2$, then \eqref{eq:Lietype-p'symm} implies that $(q-1)_{2}^{2}\cdot q^{n-2}< 2^{50}\cdot 3^{12}\cdot 5^4\cdot 7^4\cdot 13^2\cdot 17^2$, which requires $n<68$, however \cite[Table 2]{a:Hiss2002} and Lemma \ref{lem:diff} implies that $n>250$, which is a contradiction.
	
	Suppose finally that $L=\,{}^2\F_4(s)$ with $s\geq 8$.
	In this case, by \eqref{eq:Lietype-p'symm} and \cite[Theorem 5.3.9]{b:KL-90}, we have $q^{s^4(s-1)-2}<\log_{p'}(s)^2\cdot s^{26}$. Since $\log_{p'}(s)^2\leq s^2$, it follows that $[s^4(s-1)-2]\cdot \log_2(q)<28\cdot \log_2(s)$, and so $s^4(s-1)-2<13s$, which has no solutions for $s>2$.
	
	Suppose now that $L=G_2(s)$ with $s \geq 3$. If $s\neq 3,4$, then by \eqref{eq:Lietype-p'symm} and \cite[Theorem 5.3.9]{b:KL-90}, we have $q^{s(s^2-1)-2}<\log_{p'}(s)^2\cdot s^{28}$.
	Note that $\log_{p'}(s)^2\leq s$. Then $[s(s^2-1)-2]<29\cdot \log_q(s)$, and since $\log_q(s)\leq 2s/3$, we must have $s^2-1-1/3<29/3$, which is impossible since $s > 4$.
	If $s=4$, then $(q-1)_{2}^{2} \cdot q^{n-2}< 2^{26}\cdot 3^{6} \cdot 5^4 \cdot 7^2\cdot 13^2$, and so $n\leq 38$, however, \cite[Table 2]{a:Hiss2002} and Lemma \ref{lem:diff} require $n \geq 64$, which is a contradiction. Thus, we have $s=3$. In this case, by \cite[Table 2]{a:Hiss2002}, we know that $n=14$ or $n \geq 27$. On the other hand, \eqref{eq:Lietype-p'symm} implies that $(q-1)_{2}^{2} \cdot q^{n-2}< 2^{14}\cdot 3^{12}\cdot 7^2\cdot 13^2$, and so $n\leq 31$, and hence $n=14$, for which one obtains $q=5,7,11$ since $s \neq p$. The fact that $\lambda\neq p$ is an odd prime divisor of $(q-1) |\Aut(L)|$ implying that $n=14$ and $(q,\lambda)=(5,7)$, $(5,13)$, $(7,13)$, $(11,5)$, $(11,7)$, $(11,13)$. However, these cases are ruled out by Lemma \ref{lem:six}(b).
\end{proof}

\noindent \textbf{Proof of Theorem~\ref{thm:main}.}
Suppose that $\Dmc$ is a nontrivial symmetric $(v,k,\lambda)$ design with $\lambda$ prime admitting a flag-transitive and point-primitive automorphism group $G=TH \leq \AGL_d(p)=\AGL(V)$, where $T \cong (\mathbb{Z}_p)^d$ is the translation group. Then by Corollary~\ref{cor:irr} and Lemma~\ref{lem:C8}, the point-stabilizer $G_{0}$ of the point $0$) is an irreducible subgroup of $\GL_d( p)$ not containing a classical group, and so Proposition~\ref{pro:Aschbacher} gives the possibilities for $G_{0}$. If $\lambda=p$, then Proposition~\ref{prop:lam-p} implies that $\Dmc$ is the unique symmetric $(16,6,2)$ design. By the main results in \cite{a:Zhou-lam3-affine,a:Regueiro-reduction}, we can assume that $\lambda\neq 2,3$. Moreover, if $\lambda$ is coprime to $k$, then $G\leq \AGaL_{1}(p)$ by \cite{a:Biliotti-CP-sym-affine}. Therefore, we assume that $\lambda\neq p$ is at least $5$ and it is an odd prime divisor of $k$, and hence, by excluding the possibility $(16,6,2)$, we conclude form Lemmas~\ref{lem:affine-geom}-\ref{lem:exp-cross} that $G$ must be a subgroup of $1$-dimensional affine group.\medskip

\noindent \textbf{Proof of Corollary~\ref{cor:main}.} The proof follows immediate from Theorem \ref{thm:main} and the main results of \cite{a:ABD-PrimeLam,a:ABD-Exp,a:ADM-PrimeLam-An,a:Biliotti-CP-sym-affine,a:Zhou-lam3-affine,a:Regueiro-reduction}.

\section*{Acknowledgments}

The second author (Mohsen Bayat) is supported by INSF (Iran National Science Foundation) and IPM (Institute for Research in Fundamental Sciences) with grant number 4013853.


\end{document}